\documentclass[12pt,letterpaper]{article}
\usepackage{amsmath,amsfonts,amsthm,amssymb}

\hyphenation{ar-chi-me-de-an}
\setlength\overfullrule{5pt} %marks overlong lines in dvi file

% Declaration section

\swapnumbers %Theorems, lemmas, etc. numbered at the left.

%Set following proclamations in italics.
\newtheorem{lemma}{Lemma}[section]
\newtheorem{corollary}[lemma]{Corollary}
\newtheorem{theorem}[lemma]{Theorem}
\newtheorem{example}[lemma]{Example}

\theoremstyle{definition} %Set Following proclamations in normal text.
\newtheorem{definition}[lemma]{Definition}
\newtheorem{remark}[lemma]{Remark}

\newtheorem{proposition}[lemma]{Proposition}

%Label items in an enumeration with small Roman
%numerals enclosed in parentheses.

%Include section number with equation number.

%\renewcommand{\theequation}{\arabic{section}.\arabic{equation}}

\newcommand\integers{{\mathbb Z}}

\begin{document}

\title{Unitizations of generalized pseudo effect algebras and their ideals}

\author{David J. Foulis,{\footnote{Emeritus Professor, Department of
Mathematics and Statistics, University of Massachusetts, Amherst,
MA; Postal Address: 1 Sutton Court, Amherst, MA 01002, USA;
foulis@math.umass.edu.}}\hspace{.05 in} Sylvia
Pulmannov\'{a} and Elena Vincekov\'{a}{\footnote{ Mathematical Institute,
Slovak Academy of Sciences, \v Stef\'anikova 49, SK-814 73 Bratislava,
Slovakia; pulmann@mat.savba.sk, vincek@mat.savba.sk. \newline The second
and third authors were supported by Research and Development Support Agency
under the contract No. APVV-0178-11 and grant VEGA 2/0059/12.}}}

\date{}

\maketitle

\begin{abstract}
\noindent A generalized pseudo effect algebra (GPEA) is a partially
ordered partial algebraic structure with a smallest element $0$, but
not necessarily with a unit (i.e, a largest element). If a GPEA admits
a so-called unitizing automorphism, then it can be embedded as an
order ideal in its so-called unitization, which does have a unit.
We study unitizations of GPEAs with respect to a unitizing
automorphism, paying special attention to the behavior of
congruences and ideals in this setting.
\end{abstract}

\section{Introduction}

An \emph{effect algebra} (EA) is a bounded partially ordered
structure equipped with a partially defined commutative and
associative binary operation called the \emph{orthosummation},
often denoted by $\oplus$ \cite{FB}. The smallest element in an
EA, called the \emph{zero} or \emph{neutral} element, is usually
denoted by $0$, and the largest element, called the \emph{unit},
is often denoted by $1$. For each element $a$ in an EA, there is a
unique element $a\sp{\perp}$, called the \emph{orthosupplement} of
$a$, such that $a\oplus a\sp{\perp}=1$.

Effect algebras were originally introduced to represent possibly
fuzzy or unsharp propositions arising either classically \cite{SPGfuzzy}
or in quantum measurement theory \cite{BLM}.

The class of EAs includes the class of orthoalgebras \cite{FGR} and
the class of orthomodular posets \cite[p. 27]{Kalm}. The subclass
consisting of lattice-ordered EAs includes the class of MV-algebras
\cite{CCC57}, the class of orthomodular lattices \cite{Kalm}, and
the class of boolean algebras.

In spite of their considerable generality, EAs have been further
generalized in two ways: (1) By dropping the assumption that
the orthosummation is commutative. (2) By dropping the assumption
that there is a unit. Noncommutative versions of EAs called
\emph{pseudo effect algebras} (PEAs), were introduced by A.
Dvure\v censkij and T. Vetterlein in \cite{DvVe1, DvVe2} and
further studied in \cite{DvVc, DVgen, Dvtat, DXY, DvKite, Dvnew, HS,
XLGRL}. The investigation of \emph{generalized effect algebras} (GEAs),
i.e., versions of EAs having no unit, was pioneered by Z. Rie\v canov\'a
in \cite{Zdenka99, RiMa, Ri08} and further studied in \cite{FPexoGEA,
PV}. Finally, by dropping both the assumption of commutativity and the
existence of a unit, one arrives at the notion of a \emph{generalized
pseudo effect algebra} (GPEA) \cite{DVgen, DVpo, FoPuUnit, XL, XLGRL}.

It is well known that every GEA $E$ can be embedded as a maximal proper
ideal in an EA $\widehat{E}$ called its \emph{unitization} in such a way
that (1) $E$ and $\widehat{E}\setminus E$ are order-anti-isomorphic under
the restrictions of the partial order on $\widehat{E}$; (2) for all
$a\in\widehat{E}$, either $a\in E$ or else its orthosupplement $a^{\perp}\in E$;
and (3) for $x,y\in\widehat{E}\setminus E$ the orthosum of $x$ and $y$ is not
defined \cite[Theorem 1.2.6]{DvPu}. This construction of a unitization was
extended to so-called weakly commutative GPEAs in \cite{XL}. Recently, in
\cite{FoPuUnit}, it was shown that a GPEA $P$ can be embedded as a maximal
proper PEA-ideal in a PEA $U$ if and only if $P$ admits a so-called unitizing
GPEA-automorphism. Indeed, the unitization of a weakly commutative GPEA
is a special case in which the unitizing GPEA-automorphism is the identity
mapping.

In this article, which is a continuation of \cite{FoPuUnit}, we
focus on properties of congruences and ideals in the setting of a
GPEA $P$ and its unitization $U$ with respect to a unitizing automorphism
$\gamma$ (a so-called $\gamma$-unitization). We find conditions under
which a congruence on $P$ can be extended to a congruence on its
$\gamma$-unitization $U$ such that the quotient of $U$ is the unitization
of the quotient of $P$ with a unitizing automorphism induced by $\gamma$.
In particular, we extend the results obtained in \cite{PV} and \cite{XL}
for unitizations of GEAs and weakly commutative GPEAs to the more general
unitizations of GPEAs with respect to a unitizing automorphism.

We briefly investigate several versions, RDP, RDP$_0$, RDP$_1$, RDP$_2$ of
the Riesz decomposition property in GPEAs and in their $\gamma$-unitizations
and show that if $P$ is a total GPEA, i.e., if the orthosum in $P$ is defined
for all pairs of its elements, then $P$ has one of these properties if and only
if its $\gamma$-unitization $U$ has the corresponding property.

We also study relations between the existence of a smallest nontrivial Riesz
ideal in a GPEA and the existence of a smallest nontrivial Riesz ideal in its
$\gamma$-unitization.

For the reader's convenience, we devote Sections \ref{sc:GPEAs} and
\ref{sc:Unitization} below to a brief review of some basic definitions
and facts needed in this article.

We  provide several illustrating examples. In \cite{DvKite} it was shown how to
construct a large class of PEAs by starting with the positive cone $G^+$ of
a po-group $G$, a nonempty indexing set $I$, and two bijections $\lambda,
\rho\colon I\to I$. In \cite{Dvnew}, this construction was extended by
replacing $G^+$ by a more general GPEA $E$. At the end of Section
\ref{sc:Unitization}, we review this construction and relate it to our work
in this article.

\section{Generalized pseudo effect algebras} \label{sc:GPEAs}

We begin this section by recalling the definition of a GPEA \cite
[Definition 2.6]{FoPuUnit} and we observe that PEAs, GEAs, and
EAs are special kinds of GPEAs. Axiomatic characterizations of PEAs,
GEAs, and EAs can be found in \cite[\S 2]{FoPuUnit}. Also we review
some of the basic properties of these partially ordered, partial
algebraic structures. We abbreviate `if and only if' by `iff' and
the symbol $:=$ means `equals by definition.'

\begin{definition}\label{de:GPEA} A \emph{generalized pseudo effect algebra}
(GPEA) \cite{DVgen, DVpo} is a partial algebraic structure $(P;\oplus,0)$,
where $\oplus$ is a partial binary operation on $P$ called the
\emph{orthosummation}, $0$ is a constant in $P$ called the \emph{zero element},
and the following conditions hold for all $a,b,c\in P$:
\begin{enumerate}
\item[ ]
 \begin{enumerate}
\item[(GPEA1)] $a\oplus b$ and $(a\oplus b)\oplus c$ exist iff $b\oplus c$
 and $a\oplus(b\oplus c)$ exist and in this case $(a\oplus b)\oplus c=a
\oplus(b\oplus c)$ (\emph{associativity}).
\item[(GPEA2)] If $a\oplus b$ exists, then there are elements $c,d\in P$
 such that $a\oplus b=c\oplus a=b\oplus d$ (\emph{conjugation}).
\item[(GPEA3)] If $a\oplus c=b\oplus c$, or $c\oplus a=c\oplus b$, then
 $a=b$ (\emph{cancellation}).
\item[(GPEA4)] $a\oplus 0=0\oplus a=a$ (\emph{neutral element}).
\item[(GPEA5)] If $a\oplus b=0$, then $a=0=b$ (\emph{positivity}).
\end{enumerate}
 \end{enumerate}
If no confusion threatens, we often denote the GPEA $(P;\oplus,0)$
simply by $P$. If we write an equation involving an orthosum of elements
of $P$ without explicitly assuming the existence thereof, we understand
that its existence is implicitly assumed.
\end{definition}

Let $P$ be a GPEA and let $a,b,c\in P$. By (GPEA3), the elements $c,d$ in
(GPEA2) are uniquely determined. A partial order $\leq$ is defined on
the GPEA $P$ (the \emph{induced partial order}) by stipulating that $a
\leq b$ iff there is (a necessarily unique) $c\in P$ with $a\oplus c=b$,
or equivalently, iff there is (a necessarily unique) $d\in P$ with $d
\oplus a=b$. For all $a\in P$, we have $0\leq a$. Moreover, the cancellation
laws (GPEA3) can be extended to $\leq$ as follows: \emph{If $a\oplus c\leq b
\oplus c$ or $c\oplus a\leq c\oplus b$, then $a\leq b$.}

Partial binary operations \emph{left subtraction} $\diagup$ and \emph{right
subtraction} $\diagdown$ are defined on $P$ as follows: For $a,b\in P$,
$a\diagup b$ and $b\diagdown a$ are defined iff $a\leq b$, in which case
(1) $a\diagup b:=c$, where $c$ is the unique element of $P$ such that
$a\oplus c=b$ and (2) $b\diagdown a:=d$, where $d$ is the unique element
of $P$ such that $d\oplus a=b$.

\begin{definition}
Let $P$ be a GPEA and let $I$ and $S$ be nonempty subsets of $P$. Then:
\begin{enumerate}
\item[(1)] $I$ is an \emph{order ideal} iff $a\in I$, $b\in P$, and $b\leq a$
 implies that $b\in I$.
\item[(2)] $I$ is an \emph{ideal} iff $I$ is an order ideal and whenever
 $a,b\in I$ and $a\oplus b$ is defined, it follows that $a\oplus b\in I$.
\item[(3)] An ideal $I$ in $P$ is said to be \emph{normal} iff whenever $a,b,c
 \in P$ and $a\oplus c=c\oplus b$, then $a\in I\Leftrightarrow b\in I$.
\item[(4)] $S$ is a \emph{sub-GPEA} of $P$ iff, whenever two of the elements
 $a,b,c\in P$ belong to $S$ and $a\oplus b=c$, then the third element also
 belongs to $S$.
\end{enumerate}
\end{definition}
Every ideal $I$ in $P$ is a sub-GPEA, and every sub-GPEA $S$ in $P$ is a GPEA
in its own right under the restriction to $S$ of the orthosummation on $P$.

\begin{lemma}\label{le:normalidealprops1}
Let $P$ be a GPEA, let $I$ be a normal ideal in $P$, let $a,b\in P$, and
suppose that $a\oplus b$ is defined. Then $b\in I$ iff $(a\oplus b)
\diagdown a\in I$; likewise, $a\in I$ iff $b\diagup(a\oplus b)\in I$.
\end{lemma}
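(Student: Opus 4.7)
The plan is to apply (GPEA2) to unpack what the subtraction symbols mean, and then read off both conclusions as direct instances of the normality condition from part~(3) of the definition of ideal.

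First I would invoke (GPEA2): since $a\oplus b$ is defined, there exist (uniquely determined) elements $c,d\in P$ with
\[
  c\oplus a \;=\; a\oplus b \;=\; b\oplus d.
\]
By the definitions of right and left subtraction, $c=(a\oplus b)\diagdown a$ is the unique element satisfying $c\oplus a = a\oplus b$, and $d = b\diagup(a\oplus b)$ is the unique element satisfying $b\oplus d = a\oplus b$. So the two equivalences to be proved become
\[
  b\in I \iff c\in I, \qquad a\in I \iff d\in I.
\]

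For the first equivalence, the equation $c\oplus a = a\oplus b$ has exactly the shape required by the normality condition (3) (with the roles of the three elements being $c$, $a$, $b$ playing $a$, $c$, $b$ in that definition), so normality of $I$ gives $c\in I\Leftrightarrow b\in I$ immediately. For the second equivalence, the equation $a\oplus b = b\oplus d$ again fits the normality template (now with $a$, $b$, $d$ playing the roles of $a$, $c$, $b$), giving $a\in I\Leftrightarrow d\in I$.

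There is essentially no obstacle: the entire argument is bookkeeping to match the statement to the definition of a normal ideal. The only point worth double-checking is that the uniqueness clauses in (GPEA3) really do force $c$ and $d$ to coincide with the asserted subtraction expressions, which is immediate from the definitions of $\diagup$ and $\diagdown$ given just before the lemma.
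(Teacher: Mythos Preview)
Your proof is correct and follows essentially the same approach as the paper's. The paper's proof is extremely terse---it simply records the two equations $((a\oplus b)\diagdown a)\oplus a=a\oplus b$ and $b\oplus(b\diagup(a\oplus b))=a\oplus b$ and leaves the application of normality implicit---whereas you have spelled out the unpacking via (GPEA2) and the matching to the normality template explicitly.
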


\begin{proof}
Observe that $((a\oplus b)\diagdown a)\oplus a=a\oplus b$ and
$b\oplus(b\diagup(a\oplus b))=a\oplus b$.
\end{proof}

A GPEA $P$ is said to be \emph{total} iff $a\oplus b$ is defined for
all $a,b\in P$.

\begin{example} \label{ex:positivecone}
Let $G$ be an additively-written, not necessarily Abelian partially ordered
group {\rm(}po-group{\rm)}. Then the positive cone $G^+:=\{a\in G:0\leq a\}$
is a total GPEA with orthosummation $\oplus$ given by the restriction to $G^+$
of the group operation $+$ on $G$. In this case, the induced partial order
on the GPEA $G^+$ is the restriction to $G^+$ of the partial order on $G$.
\end{example}

If $P$ and $Q$ are GPEAs, then a mapping $\phi:P\to Q$ is a \emph{GPEA-morphism}
iff, for all $a,b\in P$, if $a\oplus b$ exists in $P$, then $\phi a\oplus\phi b$
exists in $Q$, and $\phi(a\oplus b)=\phi a\oplus\phi b$. A bijective GPEA-morphism
$\phi:P\to Q$ is a \emph{GPEA-isomorphism} of $P$ onto $Q$ iff $\phi^{-1}:Q\to
P$ is also a GPEA-morphism. A \emph{GPEA-automorphism} of $P$ is GPEA-isomorphism
$\phi:P\to P$.

It is not difficult to show that a bijective GPEA-morphism $\phi:P\to Q$ is a
GPEA-isomorphism iff, whenever $a,b\in P$ and $\phi a\oplus\phi b$ exists in
$Q$, then $a\oplus b$ exists in $P$.

A \emph{pseudo effect algebra} (PEA) is a GPEA with a largest element,
called the \emph{unit} and often denoted by $1$ (\cite[Definition
2.3]{FoPuUnit}). Let $P$ be a PEA with unit $1$. Clearly, the only element
$z\in P$ such that $z\oplus 1$ (or $1\oplus z$) exists is $z=0$. Also, for
$a\in P$, we have $a\leq 1$ whence we define $a\sp{\sim}:=a\diagup 1$ and
$a\sp{-}:=1\diagdown a$. Thus, $a\sp{\sim}$ and $a\sp{-}$, called the
\emph{right supplement} and the \emph{left supplement} of $a$, respectively,
are the unique elements in $P$ such that $a\oplus a\sp{\sim}=a\sp{-}\oplus a
=1$. If it happens that $a^{\sim}=a^-$, then the common element is called the
\emph{orthosupplement} of $a$ and is written as $a^{\perp}$. The PEA $P$ is
said to be \emph{symmetric} iff $a^{\perp}=a^{\sim}=a^-$ for all $a\in P$
\cite{DXY}.

\begin{example} \label{ex:intervalPEA}
Let $G$ be any {\rm(}additively written and not necessarily Abelian{\rm)}
po-group and choose an element $0\leq u\in G$. Let $G[0,u]:=\{a\in G:0
\leq a\leq u\}$, and define $a\oplus b$ for $a,b\in G[0,u]$ iff $a+b\leq u$,
in which case $a\oplus b:=a+b$. Then $(G[0,u];\oplus,0,u)$ is a PEA and
the induced partial order coincides with the partial order on $G$
restricted to $G[0,u]$. We note that, if $a,b\in G[0,u]$ and $a\leq b$,
then $a\sp{\sim}=-a+1$, $a\sp{-}=1-a$, $a\diagup b=-a+b$, and $b\diagdown a
=b-a$.
\end{example}

Some important basic properties of PEAs are collected in the following theorem
(see \cite[Theorem 2.4]{FoPuUnit}).

\begin{theorem}\label{basicpeas} Let $P$ be a PEA and let $a,b,c\in P$.
Then{\rm:}
\rm{(i)} $0^{\sim}=0^-=1$ and $1^{\sim}=1^-=0$. {\rm(iv)} $a\sp{\sim -}
=a\sp{-\sim}=a$. {\rm(ii)} $a\oplus b=c$ iff $b\sp{-}=c\sp{-}\oplus a$.
{\rm(vii)} $a\oplus b\sp{\sim}=c\sp{\sim}$ iff $c\oplus a=b$. {\rm(iii)}
$a^{\sim}\oplus b=c^{\sim}$ iff $b\sp{-}=c\oplus a\sp{\sim}$ iff $b^{--}\oplus
c=a$. {\rm(iv)} $a\leq b$ iff $b\sp{\sim}\leq a\sp{\sim}$ iff $b\sp{-}\leq a
\sp{-}$. {\rm(v)} Both $a\mapsto a\sp{\sim}$ and $a\mapsto a\sp{-}$ are
order-reversing bijections on $P$. {\rm(vi)} $a\oplus b$ exists iff $b\leq a
\sp{\sim}$ iff $a\leq b\sp{-}$.
\end{theorem}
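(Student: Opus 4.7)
The statement is essentially a list of routine identities in a pseudo effect algebra, so the plan is to prove them in an order that allows each assertion to feed the next, while being disciplined about noncommutativity. I would set up two guiding principles at the start: the uniqueness clauses in (GPEA2)--(GPEA3) say that $x^{\sim}$ and $x^{-}$ are \emph{characterized} by the equations $x\oplus x^{\sim}=1$ and $x^{-}\oplus x=1$, so any time I produce an element satisfying one of these equations, I may identify it with $x^{\sim}$ or $x^{-}$; and the cancellation law can be read off from any equation that, after applying associativity, places the same element on a common side of two equal sums.

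First I would dispose of (i). The identities $0\oplus 1=1$ and $1\oplus 0=1$ immediately give $0^{\sim}=0^{-}=1$ by the characterization above, while $1^{\sim}$ and $1^{-}$ satisfy $1\oplus 1^{\sim}=1=1\oplus 0$ and $1^{-}\oplus 1=1=0\oplus 1$, so cancellation yields $1^{\sim}=1^{-}=0$. Next I would establish $a^{\sim-}=a^{-\sim}=a$: by definition $a\oplus a^{\sim}=1$, and since $a^{\sim-}$ is the unique element satisfying $(\cdot)\oplus a^{\sim}=1$, we get $a^{\sim-}=a$; symmetrically for $a^{-\sim}=a$. These two blocks together already prove (v), because combined with the order-reversing property to come, they show $\sim$ and $-$ are mutually inverse bijections.

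Then I would prove the equivalences (ii), (vii), and (iii) in parallel, each by the same two-step template: write out $c^{-}\oplus c=1$ (or $c\oplus c^{\sim}=1$), substitute $c=a\oplus b$, reassociate, and compare with $b^{-}\oplus b=1$ (or $b\oplus b^{\sim}=1$) to invoke cancellation; the converse direction reverses the same manipulation. For (iii) I would get the first equivalence exactly as above applied to $a^{\sim}\oplus b=c^{\sim}$, and then use $a^{\sim-}=a$ together with an already-established equivalence to rewrite $b^{-}=c\oplus a^{\sim}$ as $b^{--}\oplus c=a$. With (ii) in hand, the order statement (iv) is quick: $a\leq b$ means $a\oplus d=b$ for some $d$, and by (ii) this is equivalent to $d^{-}=b^{-}\oplus a$, i.e., $b^{-}\leq a^{-}$; symmetrically for $\sim$. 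The bijection claim (v) is then the composition of (iv) with $a^{\sim-}=a^{-\sim}=a$.

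Finally, for (vi), I would argue that if $a\oplus b$ exists, it equals some $c\leq 1$, whence $c\oplus c^{\sim}=1$ gives $a\oplus b\oplus c^{\sim}=1=a\oplus a^{\sim}$, and cancellation yields $b\oplus c^{\sim}=a^{\sim}$, so $b\leq a^{\sim}$; similarly $a\leq b^{-}$. For the converse, if $b\leq a^{\sim}$ then $b\oplus t=a^{\sim}$ for some $t$, and then $a\oplus b\oplus t=a\oplus a^{\sim}=1$ exists, so by (GPEA1) $a\oplus b$ exists. The only real pitfall I anticipate is keeping left- and right-hand subtractions and supplements straight under noncommutativity, so each step must be mirrored rather than copied when swapping $\sim$ with $-$.
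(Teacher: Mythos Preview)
The paper does not supply its own proof of this theorem; it merely refers to \cite[Theorem~2.4]{FoPuUnit}. Your outline is the expected routine verification and is essentially correct. There is one small slip, of exactly the noncommutative kind you warned yourself about at the end: in your argument for (iv) you write $a\leq b$ as $a\oplus d=b$ and then apply (ii) to obtain $d^{-}=b^{-}\oplus a$; but this equation only says that $b^{-}\oplus a$ exists (equivalently $b^{-}\leq d^{-}$), not that $b^{-}\leq a^{-}$. The fix is to use the other characterization $a\leq b\Leftrightarrow\exists\,d:\ d\oplus a=b$; then (ii) with the substitution $a\mapsto d$, $b\mapsto a$, $c\mapsto b$ gives $a^{-}=b^{-}\oplus d$, which is precisely $b^{-}\leq a^{-}$. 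Symmetrically, for the $\sim$ half, (vii) applied to $a\oplus d=b$ (with $c\mapsto a$, $a\mapsto d$) yields $d\oplus b^{\sim}=a^{\sim}$, i.e., $b^{\sim}\leq a^{\sim}$. Alternatively, since your argument for (vi) is self-contained, you can simply prove (vi) before (iv) and then read (iv) off from it. All other steps go through as written.
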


In a straightforward way, one can derive formulas for left and right
subtraction as illustrated by the following lemma, the proof of which
we omit.

\begin{lemma} \label{lm:subtraction}
Let $P$ be a PEA and let $a,b\in P$. Then{\rm:}
{\rm(i)} $a\leq b\Rightarrow b\diagdown a=(a\oplus b\sp{\sim})\sp{-}$
and $a\diagup b=(b\sp{-}\oplus a)\sp{\sim}$. {\rm(ii)} $b\sp{--}\leq a
\Rightarrow b\sp{--}\diagup a=(a\sp{\sim}\oplus b)\sp{-}$. {\rm(iii)}
$b\leq a\sp{\sim}\Rightarrow a\leq b\sp{-}$, $a\sp{\sim}\diagdown b=
(b\sp{--}\oplus a)\sp{\sim}$, $b\sp{-}\diagdown a=(a\oplus b)\sp{-}$,
and $a\sp{\sim}\diagdown b=(b\sp{- -}\oplus a)\sp{\sim}$.
\end{lemma}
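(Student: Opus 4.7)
The plan is to verify each formula by checking that the proposed right-hand side is the unique element whose appropriate orthosum with $a$ or $b$ reproduces the left-hand side, appealing to the biconditional identities already packaged in Theorem~\ref{basicpeas}. Throughout, I would first confirm that the orthosum appearing on the right is actually defined, using Theorem~\ref{basicpeas}(vi), namely $x\oplus y$ exists iff $y\leq x\sp{\sim}$ iff $x\leq y\sp{-}$, together with the order-reversing properties in (iv) and the involutions $a\sp{\sim-}=a\sp{-\sim}=a$.

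For (i), assuming $a\leq b$: by (iv), $b\sp{\sim}\leq a\sp{\sim}$, so $a\oplus b\sp{\sim}$ exists. Apply Theorem~\ref{basicpeas}(ii) with orthosum $a\oplus b\sp{\sim}=z$: it gives $b\sp{\sim-}=z\sp{-}\oplus a$, i.e.\ $b=(a\oplus b\sp{\sim})\sp{-}\oplus a$, which is exactly the defining equation for $b\diagdown a$. For the second part, $b\sp{-}\leq a\sp{-}$ by (iv), so $b\sp{-}\oplus a$ exists; then apply Theorem~\ref{basicpeas}(vii) with $c:=b\sp{-}$ to obtain $a\oplus(b\sp{-}\oplus a)\sp{\sim}=b\sp{-\sim}=b$, identifying $(b\sp{-}\oplus a)\sp{\sim}$ as $a\diagup b$.

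For (ii), from $b\sp{--}\leq a$ and (iv) we get $a\sp{\sim}\leq b\sp{---}=b\sp{-}$, so $a\sp{\sim}\oplus b$ is defined. Now apply the chain of equivalences in Theorem~\ref{basicpeas}(iii), which in the form $x\sp{\sim}\oplus y=z\sp{\sim}\Leftrightarrow y\sp{--}\oplus z=x$ yields, with $x=a$ and $y=b$, that writing $z:=(a\sp{\sim}\oplus b)\sp{-}$ gives $b\sp{--}\oplus z=a$; this is the defining equation of $b\sp{--}\diagup a$. For (iii), the first assertion $a\leq b\sp{-}$ is immediate from Theorem~\ref{basicpeas}(vi). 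For $a\sp{\sim}\diagdown b=(b\sp{--}\oplus a)\sp{\sim}$, apply Theorem~\ref{basicpeas}(iii) again in the form $x\sp{\sim}\oplus y=z\sp{\sim}\Leftrightarrow y\sp{--}\oplus z=x$: with $x=b\sp{--}\oplus a$ and $y=b$, cancellation forces $z=a$, and rewriting produces the claim. For $b\sp{-}\diagdown a=(a\oplus b)\sp{-}$, Theorem~\ref{basicpeas}(vi) gives existence of $a\oplus b$, and Theorem~\ref{basicpeas}(ii) with $z=a\oplus b$ gives $b\sp{-}=(a\oplus b)\sp{-}\oplus a$, which is the defining equation of $b\sp{-}\diagdown a$. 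The final, repeated identity is covered by the same calculation.

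The only real pitfall is bookkeeping: each step requires one to know in advance that the orthosum one is forming is legal, and to keep the four-fold symmetry between $\sp{\sim}$, $\sp{-}$, $\diagup$, $\diagdown$ straight so that the correct variant of Theorem~\ref{basicpeas}(ii), (iii), or (vii) is invoked. Once existence is secured and the right biconditional is chosen, the computations themselves reduce to a single application of cancellation or involution.
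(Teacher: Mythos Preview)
Your proposal is correct, and it is exactly the routine verification the paper has in mind: the paper explicitly omits the proof, calling the derivations ``straightforward,'' and your use of the biconditionals in Theorem~\ref{basicpeas} (ii), (iii), (vii) together with (vi) for existence and (iv) for the order-reversals is the intended computation. There is nothing substantively different to compare.
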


\begin{lemma}\label{le:normalidealprops2}
Let $P$ be a PEA, let $I$ be a normal ideal in $P$, and let $a,b\in P$.
Then{\rm:} {\rm(i)} $a\in I$ iff $a\sp{--}\in I$ iff $a\sp{\sim\sim}\in I$.
{\rm(ii)} $a\sp{-}\in I$ iff $a\sp{\sim}\in I$.
\end{lemma}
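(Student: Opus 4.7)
My plan is to reduce both parts to direct applications of Lemma \ref{le:normalidealprops1} using the two identities $a^{-}\oplus a = 1$ and $a\oplus a^{\sim} = 1$ that define the left and right supplements. The key simplifications I will use are $1\diagdown a = a^{-}$, $a\diagup 1 = a^{\sim}$, $1\diagdown a^{-} = a^{--}$, and $a^{\sim}\diagup 1 = a^{\sim\sim}$, each of which is immediate from the definitions of $a^{-}$ and $a^{\sim}$ and from the fact that subtraction is characterized as the unique solution of the corresponding cancellative equation.

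For part (i), I apply the first clause of Lemma \ref{le:normalidealprops1} to the orthosum $a^{-}\oplus a = 1$ (playing the roles of $a$ and $b$, respectively); this yields $a\in I$ iff $(a^{-}\oplus a)\diagdown a^{-}\in I$, and $(a^{-}\oplus a)\diagdown a^{-} = 1\diagdown a^{-} = a^{--}$, so $a\in I$ iff $a^{--}\in I$. Symmetrically, the second clause of Lemma \ref{le:normalidealprops1} applied to $a\oplus a^{\sim} = 1$ gives $a\in I$ iff $a^{\sim}\diagup(a\oplus a^{\sim}) = a^{\sim}\diagup 1 = a^{\sim\sim}\in I$. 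Together these yield the three-way equivalence in (i). For part (ii), the first clause of Lemma \ref{le:normalidealprops1} applied to $a\oplus a^{\sim} = 1$ yields $a^{\sim}\in I$ iff $(a\oplus a^{\sim})\diagdown a = 1\diagdown a = a^{-}\in I$, which is exactly (ii).

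The argument is essentially bookkeeping and I do not anticipate a genuine obstacle; normality of $I$ is invoked only through Lemma \ref{le:normalidealprops1}, and each required equivalence collapses once the relevant subtraction is recognised as a supplement. The only thing to keep track of is which clause of Lemma \ref{le:normalidealprops1}, applied to which of the two canonical orthosums $a^{-}\oplus a = 1$ and $a\oplus a^{\sim} = 1$, produces each of the three equivalences, so that (i) and (ii) are obtained without redundancy.
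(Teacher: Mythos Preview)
Your proof is correct and is essentially the same as the paper's: both reduce each equivalence to an instance of normality applied to the identities $a^{-}\oplus a=1=a^{--}\oplus a^{-}$, $a\oplus a^{\sim}=1=a^{\sim}\oplus a^{\sim\sim}$, and $a^{-}\oplus a=1=a\oplus a^{\sim}$. The only cosmetic difference is that you route through Lemma~\ref{le:normalidealprops1} explicitly, whereas the paper invokes the defining conjugation condition for normal ideals directly on these same equations.
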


\begin{proof}
Part (i) follows from the facts that $a\sp{--}\oplus a\sp{-}=1=a\sp{-}
\oplus a$ and $a\oplus a\sp{\sim}=1=a\sp{\sim}\oplus a\sp{\sim\sim}$.
Part (ii) is a consequence of $a\oplus a\sp{\sim}=1=a\sp{-}\oplus a$.
\end{proof}

A \emph{state} on a PEA $P$ is a mapping $s:P\to[0,1]\subseteq{\mathbb R}$
such that (S1) $s(1)=1$ and (S2) $s(a\oplus b)=s(a)+s(b)$ whenever $a\oplus b$
exists in $P$. If $P$ is a PEA and $s$ is a state on $P$, then the \emph{kernel}
of $s$, i.e., $s\sp{-1}(0)=\{a\in P:s(a)=0\}$, is a normal ideal in $P$.

If $P$ and $Q$ are PEAs, then a mapping $\phi:P\to Q$ is a \emph{PEA-morphism}
iff it is a GPEA-morphism and $\phi 1=1$. The latter property is automatically
satisfied if $\phi$ is surjective. If $\phi:P\to Q$ is a bijective PEA-morphism
and $\phi^{-1}$ is also a PEA-morphism, then $\phi$ is a \emph{PEA-isomorphism}
of $P$ onto $Q$. A \emph{PEA-automorphism} of $P$ is a PEA-isomorphism
$\phi:P\to P$.

A GPEA (and in particular, a PEA) $P$ is said to be \emph{weakly commutative}
iff, for all $a,b\in P$, if $a\oplus b$ is defined, then $b\oplus a$ is
defined \cite{XLGRL}. It turns out that a PEA is symmetric iff it is weakly
commutative \cite{DXY, XL}.

Naturally, a GPEA $P$ is said to be \emph{commutative} iff, for all $a,b
\in P$, if $a\oplus b$ is defined, then $b\oplus a$ is defined, and then
$a\oplus b=b\oplus a$. A commutative GPEA is the same thing as a \emph
{generalized effect algebra} (GEA) \cite[Definition 2.2]{FoPuUnit} and
a commutative PEA is the same thing as an \emph{effect algebra} (EA)
\cite{FB}, \cite[Definition 2.1]{FoPuUnit}.

It can be shown that a GEA is total iff it can be realized, as per
Example \ref{ex:positivecone} as the positive cone in a directed Abelian
po-group.

A prototype for EAs is the system of all self-adjoint operators between
zero and identity on a Hilbert space $\mathfrak H$ (the system of so-called
\emph{effect operators} on $\mathfrak H$).

In many important examples, an effect algebra is an interval $G[0,u]=
\{a\in G:0\leq a\leq u\}$, as per Example \ref{ex:intervalPEA}, in the
positive cone of an Abelian po-group $G$. For instance, the set of all
effect operators on a Hilbert space $\mathfrak H$ is the interval
${\mathbb G}(\mathfrak H)[0,I]$ in the po-group ${\mathbb G}(\mathfrak H)$
(with the usual partial order) of all self-adjoint operators on $\mathfrak H$.
For more details about EAs and GEAs, see \cite{DvPu}.

\section{Unitization of a GPEA} \label{sc:Unitization}

In this section we review the notion of a binary unitization of a GPEA
and some of its properties \cite{FoPuUnit}.

\begin{definition}{\cite[Definition 3.1]{FoPuUnit}}\label{de:unitize} If $(P;\oplus,0)$
is a GPEA, then a PEA $(U;+,0,1)$ is called a binary \emph{unitization} of $P$ iff
the following conditions are satisfied:
\begin{enumerate}
\item[ ]
 \begin{enumerate}
\item[(U1)] $P\subseteq U$ and if $a,b\in P$, then $a\oplus b$
 exists in $P$ iff $a+b$ exists in $U$, and then $a\oplus b=a+b$.
\item[(U2)] $1\notin P$.
\item[(U3)] If $x,y\in U\setminus P$, then $x+y$ is undefined.
\end{enumerate}
 \end{enumerate}
 \end{definition}

In this paper, as in \cite{FoPuUnit}, \emph{we shall be considering
only binary unitizations}; hence, for simplicity, we usually omit the
adjective `binary' in what follows. See \cite[Theorem 3.3]{FoPuUnit}
for some of the basic properties of a unitization $U$ of a GPEA $P$;
in particular, for the fact that \emph{$P$ is a normal maximal proper
ideal of $U$}. By \cite[Theorem 5.3]{FoPuUnit}, \emph{a PEA $U$ is a
unitization of some GPEA $P$ iff $U$ admits a two-valued state $s$,
and in this case, $P$ is the kernel of $s$}.

\begin{definition} \label{df:unitizingauto}
If $P$ is a GPEA, then a GPEA-automorphism $\gamma:P\to P$ is said to
be \emph{unitizing} iff, for all $a,b\in P$, $\gamma a\oplus b$ is
defined iff $b\oplus a$ is defined.
\end{definition}

Obviously, if $P$ is a total GPEA, then every GPEA-automorphism of $P$
is unitizing. Also, $P$ is weakly commutative iff the identity mapping
on $P$ is a unitizing GPEA-automorphism. By \cite[Lemma 2.5, Lemma 2.7]
{FoPuUnit}, a PEA admits one and only one unitizing GPEA-automorphism, namely
$a\mapsto a^{--}$.

The next theorem describes the construction of a unitization of a GPEA $P$
having a unitizing automorphism $\gamma$.

\begin{theorem}{\rm \cite[Theorem 4.2]{FoPuUnit}} \label{th:gammaunitization} Let
$(P;\oplus,0)$ be a GPEA, let $\gamma:P\to P$ be a unitizing GPEA-automorphism,
let $P^{\eta}$ be a set disjoint from $P$ and with the same cardinality as $P$,
and let $\eta:P\to P^{\eta}$ be a bijection. Define $U:=P\cup P^{\eta}$ and let
$+$ be the partial binary operation on $U$ defined as follows:
\begin{enumerate}
\item[\rm(1)] If $a,b\in P$, then $a+b$ is defined iff $a\oplus b$ is defined, in
 which case $a+b:=a\oplus b$.
\item[\rm(2)] If $a\in P$ and $x\in U\setminus P$ with $b:=\eta^{-1}x$, then
 $a+x$ is defined iff $a\leq b$, in which case $a+x:=\eta c\in P^{\eta}=
 U\setminus P$, where $c$ is the unique element of $P$ such that $c\oplus a=b$.
 Thus, for $a,b\in P$, $a+\eta b$ is defined iff $a\leq b$, in which case $a+
 \eta b=\eta(b\diagdown a)\in U\setminus P$.
\item[\rm(3)] If $b\in P$ and $y\in U\setminus P$ with $a:=\eta^{-1}y$, then $y+b$
 is defined iff $\gamma b\leq a$, in which case $y+b:=\eta c\in P^{\eta}=U
 \setminus P$ where $c$ is the unique element of $P$ such that $\gamma b\oplus c=a$.
 Thus, for $a,b\in P$, $\eta a+b$ is defined iff $\gamma b\leq a$, in which case
 $\eta a+b=\eta(\gamma b\diagup a)\in U\setminus P$.
\item[\rm(4)] If $x,y\in P^{\eta}=U\setminus P$, then $x\oplus y$ is undefined.
\end{enumerate}
Then, with $1:=\eta 0$, $(U;+,0,1)$ is a PEA and it is a unitization of the
GPEA $(P;\oplus,0)$. Moreover, for $a\in P$, we have $\eta a=a\sp{\sim}\in U
\setminus P$ and $\gamma a=a\sp{--}\in P$.
\end{theorem}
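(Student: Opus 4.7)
The plan is to verify the PEA axioms for $(U;+,0,1)$ and the unitization conditions (U1)--(U3) by systematic case analysis on whether each element involved lies in the ideal part $P$ or in the complementary set $P^{\eta}$. Conditions (U1)--(U3) are immediate from clauses (1) and (4) defining $+$ together with $1=\eta 0\in P^{\eta}$. The neutral element axiom (GPEA4) splits into $a\in P$, where it reduces to (GPEA4) in $P$, and $a=\eta a_{0}\in P^{\eta}$, where clauses (2) and (3) reduce it to $a_{0}\diagdown 0=a_{0}$ and $\gamma 0\diagup a_{0}=0\diagup a_{0}=a_{0}$. Positivity (GPEA5) is immediate: a sum $x+y=0\in P$ forces both summands into $P$ by clauses (2)--(4), whereupon (GPEA5) in $P$ finishes the argument.

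Cancellation (GPEA3) and conjugation (GPEA2) share a common strategy. For (GPEA3), whenever either the cancelled element or an argument lies in $P^{\eta}$, clauses (2) and (3) rewrite the given equation in the form $\eta(\cdots)=\eta(\cdots)$; applying $\eta^{-1}$ and using the defining properties of $\diagup$ and $\diagdown$ together with cancellation in $P$ yield equality of the two remaining arguments. For (GPEA2), the case $a,b\in P$ is immediate from (GPEA2) in $P$; the mixed cases are handled by applying (GPEA2) in $P$ to a sum inside $P$ that is implicit in the definition of $a+b$ and translating the result back through $\eta$. The unitizing property of $\gamma$ is used here precisely to ensure that when one conjugate naturally appears as a sum $x\oplus y$, the companion sum $\gamma y\oplus x$ (or $y\oplus x$, as appropriate) is also defined.

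Associativity (GPEA1) is the main obstacle. There are eight a priori configurations of $(a,b,c)\in U^{3}$, but any configuration with at least two elements in $P^{\eta}$ makes both $(a+b)+c$ and $a+(b+c)$ undefined by clause (4), since a defined sum of a $P^{\eta}$-element with a $P$-element always lies in $P^{\eta}$. Thus only four configurations survive: all three in $P$ (immediate from (GPEA1) for $P$), and exactly one of $a,b,c$ lying in $P^{\eta}$. In each of the three nontrivial subcases I would expand both bracketings via clauses (2) and (3) and reduce the equivalence of their domains together with the equality of their values to identities inside $P$ of the form $\gamma(x\oplus y)=\gamma x\oplus\gamma y$ and rewrites such as $(x\oplus y)\diagdown z=x\oplus(y\diagdown z)$. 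The unitizing property of $\gamma$ is invoked to convert sums on the ``wrong side'' that arise when the $P^{\eta}$ summand sits in the middle or on the right.

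Finally, once the GPEA axioms for $U$ are established, clause (2) with $b=a$ gives $a+\eta a=\eta(a\diagdown a)=\eta 0=1$, showing $a,\eta a\leq 1$ for every $a\in P$ and simultaneously identifying $a^{\sim}=\eta a$; clause (3) with $b=a$ gives $\eta(\gamma a)+a=\eta(\gamma a\diagup\gamma a)=\eta 0=1$, identifying $a^{-}=\eta(\gamma a)$, and a further application of the identity $x+\eta x=1$ at $x=\gamma a$ yields $a^{--}=\gamma a$. Since every element of $U$ is either some $a\in P$ or some $\eta a\in P^{\eta}$, this shows that $1$ is a largest element, so $(U;+,0,1)$ is a PEA and, by (U1)--(U3), a unitization of $P$, with the asserted formulas $\eta a=a^{\sim}$ and $\gamma a=a^{--}$.
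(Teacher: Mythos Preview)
The paper does not actually prove this theorem: it is quoted verbatim as \cite[Theorem 4.2]{FoPuUnit} in the review Section~\ref{sc:Unitization}, with no accompanying argument. So there is no ``paper's own proof'' to compare against; any verification must be supplied from scratch, as you have done.

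Your outline is the natural direct verification and is correct. The reduction of the eight associativity configurations to four is sound, since a defined mixed sum always lands in $P^{\eta}$ and two $P^{\eta}$-elements never sum. In each of the three surviving mixed cases the equivalence of domains and equality of values reduce, via the defining relations $(b\diagdown a)\oplus a=b$ and $a\oplus(a\diagup b)=b$, to associativity and cancellation in $P$ together with $\gamma(b\oplus c)=\gamma b\oplus\gamma c$; the unitizing property of $\gamma$ is exactly what is needed in the conjugation step to guarantee existence of the companion sums. Your derivation of $a^{\sim}=\eta a$ from clause~(2) and of $a^{-}=\eta(\gamma a)$, hence $a^{--}=\gamma a$, from clause~(3) is also correct. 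One small point worth making explicit when you write it out in full: in the case $a=\eta a_{0}$, $b,c\in P$, the backward implication of the domain equivalence requires that $\gamma$ is a GPEA-\emph{isomorphism}, not merely a morphism, so that existence of $\gamma b\oplus\gamma c$ forces existence of $b\oplus c$.
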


We shall refer to the unitization $U$ of $P$ as constructed in Theorem
\ref{th:gammaunitization} by using the unitizing automorphism $\gamma$ as the
\emph{$\gamma$-unitization} of $P$.

Suppose that $V$ is a unitization of the GPEA $P$ and that $\gamma
\colon P\to P$ is the restriction to $P$ of the mapping $x\mapsto
x\sp{--}$ on $V$. Then by \cite[Theorem 3.3 (viii)]{FoPuUnit}, $\gamma$
is a unitizing GPEA-automorphism of $P$ called \emph{the unitizing
GPEA-automorphism of $P$ corresponding to $V$}. Therefore, \emph{a
GPEA $P$ admits a unitization iff it admits a unitizing GPEA-automorphism.}

The proof of the following theorem, using the results of \cite[Theorem 3.3]
{FoPuUnit}, is fairly straightforward, and we omit it here.

\begin{theorem}
Let $V$ be a unitization of the GPEA $P$, let $\gamma\colon P\to P$
be the corresponding unitizing GPEA-automorphism, and let $U$ be the
$\gamma$-unitization of $P$. Then there exists a unique PEA-isomorphism
of $U$ onto $V$ that reduces to the identity on $P$, namely the
mapping $\phi\colon U\to V$ defined by $\phi a=a$ for $a\in P$ and
$\phi x:=(x\sp{-\sb{U}})\sp{\sim\sb{V}}$ for $x\in U\setminus P$.
\end{theorem}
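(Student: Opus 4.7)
The plan is to verify directly that $\phi$ is a well-defined bijection, that it is a PEA-morphism with the reverse-existence property, and then to deduce uniqueness from the rigidity of the right supplement. By the last sentence of Theorem~\ref{th:gammaunitization}, every $x\in U\setminus P$ has a unique expression $x=\eta a=a^{\sim_U}$ with $a\in P$, and Theorem~\ref{basicpeas}(iv) then gives $x^{-_U}=a$, so $\phi x=a^{\sim_V}$. An analogous argument in $V$, using that $P$ is an ideal of $V$ with $1_V\notin P$ and invoking (U3) applied to $V$, shows that $a\mapsto a^{\sim_V}$ is a bijection $P\to V\setminus P$: if $a\in P$ then $a^{\sim_V}\notin P$, lest $1_V=a\oplus a^{\sim_V}\in P$; conversely, if $x\in V\setminus P$ then the equation $x^{-_V}\oplus x=1_V$ together with (U3) forces $x^{-_V}\in P$ and $x=(x^{-_V})^{\sim_V}$. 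Hence $\phi$ is a well-defined bijection $U\to V$ fixing $P$ pointwise and mapping $U\setminus P$ bijectively onto $V\setminus P$; moreover $\phi(1_U)=\phi(\eta 0)=0^{\sim_V}=1_V$.

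Next I would check the biconditional ``$x+y$ exists in $U$ iff $\phi x\oplus\phi y$ exists in $V$, and in that case $\phi(x+y)=\phi x\oplus\phi y$'' in each of the four cases of Theorem~\ref{th:gammaunitization}. Case (1) is immediate from (U1) applied to $V$, and case (4) is immediate from (U3) applied to $V$. In case (2), with $a\in P$ and $y=\eta b$, both existence conditions reduce to $a\leq b$ via Theorem~\ref{basicpeas}(iv),(vi); and letting $c:=b\diagdown a$, the identity $c\oplus a\oplus b^{\sim_V}=b\oplus b^{\sim_V}=1_V=c\oplus c^{\sim_V}$ together with cancellation (GPEA3) gives $a\oplus b^{\sim_V}=c^{\sim_V}=\phi(\eta c)$. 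Case (3) is the main obstacle. Here $x=\eta a$, $b\in P$, and the key input is that because $\gamma$ is the unitizing automorphism of $P$ corresponding to $V$, one has $\gamma b=b^{-_V-_V}$ for every $b\in P$. The $U$-side condition $\gamma b\leq a$ is then equivalent, via Theorem~\ref{basicpeas}(iv),(vi), to existence of $a^{\sim_V}\oplus b$ in $V$; and setting $d:=\gamma b\diagup a$ so that $b^{--}\oplus d=a$ in $V$, part (iii) of Theorem~\ref{basicpeas} immediately yields $a^{\sim_V}\oplus b=d^{\sim_V}=\phi(\eta d)$.

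With all four cases verified, $\phi$ is a bijective GPEA-morphism satisfying the reverse-existence criterion recalled just after the definition of GPEA-morphism, so $\phi$ is a GPEA-isomorphism; combined with $\phi(1_U)=1_V$, this upgrades $\phi$ to a PEA-isomorphism. For uniqueness, let $\psi\colon U\to V$ be any PEA-isomorphism fixing $P$ pointwise, and let $x=a^{\sim_U}\in U\setminus P$. Applying $\psi$ to $a+x=1_U$ and using $\psi(1_U)=1_V$ yields $a\oplus\psi(x)=1_V$, so by uniqueness of the right supplement in $V$ we get $\psi(x)=a^{\sim_V}=\phi(x)$; hence $\psi=\phi$. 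The substantive labor is bookkeeping --- keeping the two ambient PEA structures $U$ and $V$ notationally distinct via the decorated supplements $\sim_U,\sim_V,-_U,-_V$ while mechanically applying Theorem~\ref{basicpeas} inside one ambient or the other.
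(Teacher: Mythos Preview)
Your proof is correct and is precisely the kind of direct verification the paper has in mind; the paper itself omits the proof, remarking only that it ``is fairly straightforward'' using the basic properties of unitizations from \cite[Theorem 3.3]{FoPuUnit}. Your case split along the four clauses of Theorem~\ref{th:gammaunitization}, the appeal to Theorem~\ref{basicpeas}(iii) in case (3), and the uniqueness argument via right supplements are exactly what is needed.
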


\begin{example}\label{ex:2.13} {\rm \cite[Examples 2.13 and 3.2]{DXY}} Let
$G$ be a directed po-group with center $C(G)$, let $c\in G$, let $\integers
\overset{\rightarrow}\times G$ be the lexicographic product of the ordered
group of integers $\integers$ with $G$, and let $0<n\in\integers$. Define
the interval PEA $U:=(\integers\overset{\rightarrow}\times G)[(0,0),(n,c)]$.
Then $U$ is symmetric {\rm(}weakly commutative{\rm)} iff $c\in C(G)$.

Now we consider the case $n=1$ and $c\notin C(G)$. Then the PEA $U=(\integers
\overset{\rightarrow}\times G)[(0,0),(1,c)]$ is not symmetric. Define a
two-valued state $s$ on $U$ by
\begin{enumerate}
\item[{\rm(1)}] $s(0,g)=0$ if $0\leq g\in G$,\  and\  {\rm(2)} $s(1,g)=1$ if $c\geq g\in G$.
\end{enumerate}
Then by {\rm\cite[Theorem 5.3]{FoPuUnit}}, the set $P:=s^{-1}(0)=\{(0,g):0\leq g\}$
forms a normal maximal proper ideal in $U$, $P$ is a GPEA isomorphic to the
GPEA $G^+$ {\rm(}Example \ref{ex:positivecone}{\rm)}, and $U$ is a unitization
of $P$. Note that $P$ is a total GPEA, hence $P$ is weakly commutative. To find
the unitizing GPEA-automorphism $\gamma$, we observe that
$$
(0,g)^-+(0,g)=(1,c)\,\Rightarrow\, (0,g)^-=(1,c-g), \text{\ and}
$$
$$
(0,g)^{--}+(1,c-g)=(1,c)\,\Rightarrow\, (0,g)^{--}=(0,c-(c-g)).
$$
Thus for the element $(0,g)\in P$, $\gamma(0,g)=(0,c-(c-g))$.
\end{example}

We conclude this section with a presentation of an important class of
PEAs, called \emph{kite algebras}, constructed by A. Dvure\v{c}enskij in
\cite{Dvnew}. As we shall see, every kite algebra is a unitization of a
GPEA. In our presentation, we shall find it convenient to make some small
changes in the notation of \cite{Dvnew}. First, for consistency with our
notation above, we use $P$, rather than $E$, for the base GPEA from which
a kite algebra is constructed. Second, for the bijections $\lambda$ and
$\rho$ in \cite{Dvnew}, we change notation to reduce the number of
occurrences of $\lambda\sp{-1}$ and $\rho\sp{-1}$. Third, for ease of
comparison with Theorem \ref{th:gammaunitization} we replace
${\overline a}$ by $\eta a$. Fourth, we use $K$ rather than $K
\sp{\lambda,\rho}\sb{I}$ for a kite algebra.

Thus, for the remainder of this section \emph{we assume that $(P;
\oplus,0)$ is a GPEA, $I$ is a nonempty indexing set, and $\lambda,
\rho\colon I\to I$ are bijections. We organize $P\sp{I}$ into a GPEA
$(P\sp{I};\oplus\sb{P\sp{I}},0\sp{I})$ where $\oplus\sb{P\sp{I}}$ is
the obvious coordinatewise orthosummation and $0\sp{I}$ is the family
in $P\sp{I}$ all the elements of which are $0$. Let $P\sp{\eta}$ be
a set disjoint from $P$ and with the same cardinality as $P$ and let
$\eta\colon P\to P\sp{\eta}$ be a bijection.}

Evidently, $P$ is total (respectively, weakly commutative) iff
$P\sp{I}$ is total (respectively, weakly commutative).

The following conditions on the bijections $\lambda,\rho\colon I\to I$
were introduced in \cite{Dvnew} (but with $\lambda$ and $\rho$
replaced by $\lambda\sp{-1}$ and $\rho\sp{-1}$): For all $(a\sb{i})
\sb{i\in I},(b\sb{i})\sb{i\in I}\in P\sp{I}$ and for all $i\in I$,
\begin{enumerate}
\item[ ]
 \begin{enumerate}
\item[(KCI)] $a\sb{\rho i}\oplus b\sb{i}$ exists in $P$ iff
 $b\sb{i}\oplus  a\sb{\lambda i}$ exists in $P$, and
\item[(KCII)] $a\sb{\lambda i}\oplus b\sb{i}$ exists in $P$ iff
  $b\sb{i}\oplus a\sb{\rho i}$ exists in $P$.
\end{enumerate}
 \end{enumerate}
Obviously, if $\lambda=\rho$, then conditions (KCI) and (KCII) are
identical, and in this case it can be shown that they both hold iff
$P$ is a total GPEA. On the other hand, if $P$ is a total GPEA,
then (KCI) and(KCII) are automatically satisfied. In \cite{Dvnew}, it
is assumed that the bijections $\lambda,\rho\colon I\to I$ satisfy
both conditions (KCI) and (KCII). Later in Lemma \ref{lm:gammaunitizing}
and Theorem \ref{th:kiteisomorphism}, we shall assume (KCI), but not
necessarily (KCII).

\begin{definition}
Noting that $P\sp{I}$ and $(P\sp{\eta})\sp{I}$ are disjoint, we
define
\[
K:=P\sp{I}\cup(P\sp{\eta})\sp{I}
\]
and we define $0\sb{K}\in P\sp{I}$ and $1\sb{K}\in(P\sp{\eta})\sp{I}$
by
\[
0\sb{K}:=0\sp{I}\text{\ and\ }1\sb{K}=(\eta c\sb{i})\sb{i\in I},
 \text{\ where\ }c\sb{i}:=0\text{\ for all\ }i\in I.
\]
We organize $K$ into a partial algebra $(K;+,0\sb{K},
1\sb{K})$, where the partial binary operation $+$ on $K$ is
defined as follows: If $(a\sb{i})\sb{i\in I}, (b\sb{i})\sb{i\in I}
\in P\sp{I}$, then
\begin{enumerate}
\item[ ]
 \begin{enumerate}
\item[{\rm(K1)}] $(a\sb{i})\sb{i\in I}+(b\sb{i})\sb{i\in I}:=(a\sb{i}
 \oplus b\sb{i})\sb{i\in I}$ iff $a\sb{i}\oplus b\sb{i}$ exists in
 $P$ for all $i\in I$.
\item[{\rm(K2)}] $(a\sb{i})\sb{i\in I}+(\eta b\sb{i})\sb{i\in I}:=
 (\eta(b\sb{i}\diagdown a\sb{\lambda i}))\sb{i\in I}$ iff $a\sb
 {\lambda i}\leq b\sb{i}$ for all $i\in I$.
\item[{\rm(K3)}] $(\eta a\sb{i})\sb{i\in I}+(b\sb{i})\sb{i\in I}:=
 (\eta(b\sb{\rho i}\diagup a\sb{i}))\sb{i\in I}$ iff $b\sb{\rho i}
 \leq a\sb{i}$ for all $i\in I$.
\item[{\rm(K4)}] $(\eta a\sb{i})\sb{i\in I}+(\eta b\sb{i})\sb{i\in I}$
 is undefined.
\end{enumerate}
 \end{enumerate}
\end{definition}
\noindent If $K$ is a PEA, it is called the \emph{kite algebra} determined
by $P$, $I$, $\lambda$, and $\rho$.

\smallskip

By (K1), the restriction of $+$ to the GPEA $P\sp{I}$ coincides with
$\oplus\sb{P\sp{I}}$, whence, for all $(a\sb{i})\sb{i\in I}$, we have
 $(a\sb{i})\sb{i\in I}+0\sb{K}=0\sb{K}+(a\sb{i})\sb{i\in I}$. Also,
by (K2) and (K3), $0\sb{K}+(\eta a\sb{i})\sb{i\in I}=(\eta a\sb{i})
\sb{i\in I}+0\sb{K}=(\eta a\sb{i})\sb{i\in I}$. Furthermore, by
(K2) and (K3), $(a\sb{i})\sb{i\in I}+1\sb{K}$ is defined iff
$1\sb{K}+(a\sb{i})\sb{i\in I}$ is defined iff $(a\sb{i})\sb{i\in I}=
0\sb{K}$, whereas by (K4), $(\eta a\sb{i})\sb{i\in I}+1\sb{K}$
and $1\sb{K}+(\eta a\sb{i})\sb{i\in I}$ are undefined. If $K$ is a
kite algebra, it is obviously a (binary) unitization of $P\sp{I}$.

\begin{definition} \label{df:gamma}
Define $\gamma\colon P\sp{I}\to P\sp{I}$ by
$$\gamma((a\sb{i})\sb{i\in I})=(a\sb{\rho\lambda^{-1}i})\sb{i\in I}\text
 {\ for all\ }(a\sb{i})\sb{i\in I}\in P\sp{I}.$$
\end{definition}

\begin{lemma} \label{lm:gammaunitizing}
{\rm(KCI)} holds iff $\gamma$ is a unitizing GPEA-automorphism on
$P\sp{I}$.
\end{lemma}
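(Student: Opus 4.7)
First, I observe that $\gamma$ is always a GPEA-automorphism of $P^I$, independently of (KCI): since $\rho\lambda^{-1}\colon I\to I$ is a bijection and the orthosummation on $P^I$ is coordinatewise, the map $\gamma$ is a mere relabeling of coordinates, hence a bijection of $P^I$ that both preserves and reflects the existence and the value of the orthosum. So the lemma reduces to the equivalence of (KCI) with the unitizing condition: for all $A,B\in P^I$, $\gamma A+B$ is defined iff $B+A$ is defined.

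Second, I translate the unitizing condition into a more usable form. For $A=(a_i)_{i\in I},B=(b_i)_{i\in I}\in P^I$, the coordinatewise definition of $+$ on $P^I$, together with the reindexing $j=\lambda i$, gives
\[
\gamma A+B\text{ is defined}\ \Longleftrightarrow\ a_{\rho i}\oplus b_{\lambda i}\text{ is defined for every }i\in I,
\]
while $B+A$ is defined iff $b_i\oplus a_i$ is defined for every $i$. Because $B\mapsto(b_{\lambda i})_{i\in I}$ is a self-bijection of $P^I$, replacing $B$ by this reindexing shows that the unitizing condition is equivalent to the statement $(\ast)$: for all $A,B\in P^I$,
\[
\bigl(\forall i\in I\colon a_{\rho i}\oplus b_i\text{ is defined}\bigr)\ \Longleftrightarrow\ \bigl(\forall i\in I\colon b_i\oplus a_{\lambda i}\text{ is defined}\bigr).
\]

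Third, I prove $(\ast)$ iff (KCI). The direction (KCI)$\Rightarrow(\ast)$ is immediate, as a pointwise equivalence entails the universally quantified one. For $(\ast)\Rightarrow$(KCI), I decouple the universal quantifier. Fix $i_0\in I$ and arbitrary $A,B\in P^I$; I must show that $a_{\rho i_0}\oplus b_{i_0}$ is defined iff $b_{i_0}\oplus a_{\lambda i_0}$ is. Construct $A'=(a'_j)_j,B'=(b'_j)_j\in P^I$ by setting $a'_j:=a_j$ if $j\in\{\rho i_0,\lambda i_0\}$ and $a'_j:=0$ otherwise, and $b'_j:=b_{i_0}$ if $j=i_0$ and $b'_j:=0$ otherwise. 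Applying $(\ast)$ to $(A',B')$, at every index $i\neq i_0$ the relevant $b'_i$ equals $0$, so by (GPEA4) both $a'_{\rho i}\oplus b'_i$ and $b'_i\oplus a'_{\lambda i}$ are automatically defined; the equivalence therefore collapses to precisely the (KCI) statement at the coordinate $i_0$ for the original tuples $A,B$. Since $i_0,A,B$ were arbitrary, (KCI) follows.

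The main (mild) obstacle is the edge case $\rho i_0=\lambda i_0$, in which the construction of $A'$ specifies the same coordinate twice; but the two specifications are compatible (the value is $a_{\rho i_0}=a_{\lambda i_0}$), and (KCI) at $i_0$ degenerates to a weak-commutativity statement at that coordinate, which is extracted from $(\ast)$ by the same argument. Apart from this case and the bookkeeping of the reindexings, the proof is mechanical.
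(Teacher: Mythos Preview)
Your argument is correct. Both you and the paper observe that $\gamma$ is a GPEA-automorphism for free and then reduce the lemma to a reindexing exercise linking (KCI) to the defining condition for a unitizing automorphism. The paper's proof simply substitutes $i\mapsto\lambda^{-1}i$ and $b_i\mapsto c_{\lambda i}$ to pass between (KCI) and the coordinatewise condition
\[
a_{\rho\lambda^{-1}i}\oplus c_i\text{ exists }\Longleftrightarrow\ c_i\oplus a_i\text{ exists},
\]
and then identifies the latter with ``$\gamma$ is unitizing.''

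The one genuine addition in your write-up is the localization step for the converse direction. The paper's proof begins the converse by assuming the \emph{pointwise} equivalence above, whereas the actual hypothesis ``$\gamma$ is unitizing'' is the \emph{global} statement that the two universally quantified families of existence conditions are equivalent as wholes. Your construction of $A',B'$ with all but the $i_0$-relevant coordinates set to $0$ is exactly what is needed to pass from the global equivalence back to the pointwise (KCI); the paper leaves this implicit. So your route is the same as the paper's at heart, but it supplies the one step the paper glosses over, and your treatment of the edge case $\rho i_0=\lambda i_0$ is harmless bookkeeping.
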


\begin{proof}
Clearly, $\gamma$ is a GPEA-automorphism on $P\sp{I}$. Assume that
(KCI) holds. Replacing $i$ by $\lambda\sp{-1}i$ in (KCI), we find that,
for all $(a\sb{i})\sb{i\in I},(b\sb{i})\sb{i\in I}\in P\sp{I}$
and for all $i\in I$,
\setcounter{equation}{0}
\begin{equation} \label{eq:gamma1}
a\sb{\rho\lambda\sp{-1}i}\oplus b\sb{\lambda\sp{-1}i} \text{\ exists in\ }P
\text{\ iff\ }b\sb{\lambda\sp{-1}i}\oplus a\sb{i}\text{\ exists in\ }P.
\end{equation}
Now, given any $(c\sb{i})\sb{i\in I}\in P\sp{I}$, we let $b\sb{i}
:=c\sb{\lambda i}$ in (\ref{eq:gamma1}), so that $b\sb{\lambda\sp{-1}i}
=c\sb{i}$, and we have, for all $(a\sb{i})\sb{i\in I},(c\sb{i})\sb{i\in I}
\in P\sp{I}$ and for all $i\in I$,
\begin{equation} \label{eq:gamma2}
a\sb{\rho\lambda\sp{-1}i}\oplus c\sb{i}\text{\ exists in\ }P\text{\ iff\ }
c\sb{i}\oplus a\sb{i}\text{\ exists in\ }P.
\end{equation}
Therefore, $\gamma$ is a unitizing GPEA-automorphism on $P\sp{I}$.
Conversely, if (\ref{eq:gamma2}) holds, then by first replacing $i$ by
$\lambda i$, then putting $b\sb{i}:=c\sb{\lambda i}$, we arrive back
at (KCI).
\end{proof}

Suppose that (KCI) holds, so that, by Lemma \ref{lm:gammaunitizing},
$\gamma$ is a unitizing GPEA-auto\-morphism on $P\sp{I}$. Therefore, we
can construct the $\gamma$-unitization $U$ of $P\sp{I}$ as per Theorem
\ref{th:gammaunitization}. To do this, we begin by putting $(P\sp{I})
\sp{\eta}:=(P\sp{\eta})\sp{I}$ and we choose for the bijection from
$P\sp{I}$ to $(P\sp{I})\sp{\eta}$ the mapping---also denoted by
$\eta$---defined by $\eta(a\sb{i})\sb{i\in I}:=(\eta a\sb{i})\sb{i\in I}$
for all $(a\sb{i})\sb{i\in I}\in P\sp{I}$. (This dual use of $\eta$ should
not cause any confusion.) Then, \emph{as a set}, the $\gamma$-unification
$U=P\sp{I}\cup(P\sp{I})\sp{\eta}$ of $P\sp{I}$ is the same as $K$. However,
as per Theorem \ref{th:gammaunitization} and the definition of $\gamma$,
$U$ is organized into a PEA $(U;+\sb{U},0\sb{U},1\sb{U})$ as follows:
$0\sb{U}:=0\sb{K}$, $1\sb{U}:=1\sb{K}$, and for $(a\sb{i})\sb{i\in I},
(b\sb{i})\sb{i\in I}$ in $P^I$:
\begin{enumerate}
\item[(U1)] $(a_i)_{i\in I}+\sb{U}(b_i)_{i\in I}:=(a_i\oplus b_i)_{i\in I}$ iff
 $a_i\oplus b_i$ is defined in $P$ for all $i\in I$.
\item[(U2)] $(a_i)_{i\in I}+\sb{U}(\eta b_i)\sb{i\in I}:=(\eta(b_i\diagdown a_i))
 _{i\in I}$ iff $a\sb{i}\leq b\sb{i}$ for all $i\in I$.
\item[(U3)] $(\eta a\sb{i})\sb{i\in I}+\sb{U}(b\sb{i})\sb{i\in I}:=
 (\eta(b\sb{\rho\lambda\sp{-1}i}\diagup a\sb{i}))\sb{i\in I}$ iff $b\sb{\rho
 \lambda\sp{-1}i}\leq a\sb{i}$ for all $i\in I$.
\item[(U4)] $(\eta a\sb{i})\sb{i\in I}+\sb{U}(\eta b\sb{i})\sb{i\in I}$ is
 undefined.
\end{enumerate}
According to (U1), the restriction of $+\sb{U}$ to $P\sp{I}$ coincides with
$\oplus\sb{P\sp{I}}$, whence it also coincides with the restriction of $+$
to $P\sp{I}$.

\begin{theorem} \label{th:kiteisomorphism}
Suppose that {\rm(KCI)} holds. Then the mapping $\phi\colon U\to K$ defined
for all $(a\sb{i})\sb{i\in I}\in P\sp{I}$ by
\[
\phi(a\sb{i})\sb{i\in I}:=(a\sb{i})\sb{i\in I}\text{\ and\ }\phi((\eta a
 \sb{i})\sb{i\in I})=(\eta a\sb{\lambda i})\sb{i\in I}
\]
is an isomorphism of the PEA $(U;+\sb{U},0\sb{U},1\sb{U})$ onto the
partial algebra $(K;\linebreak+,0\sb{K},1\sb{K})$ such that the
restriction of $\phi$ to $P\sp{I}$ is the identity mapping. Therefore,
$(K;+,0\sb{K},1\sb{K})$ is a PEA, $K$ is a kite algebra, and $\phi
\colon U\to K$ is a PEA-isomorphism. Furthermore, if $\psi\colon U
\to K$ is any PEA-morphism such that the restriction of $\psi$ to
$P\sp{I}$ is the identity mapping, then $\psi=\phi$.
\end{theorem}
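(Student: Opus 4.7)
The plan is to verify that $\phi$ is a bijection that exactly intertwines the partial operations on $U$ and $K$. Because $U$ is a PEA by Theorem \ref{th:gammaunitization}, transporting its structure through $\phi$ will simultaneously show that $K$ is a PEA and that $\phi$ is a PEA-isomorphism. Bijectivity is immediate: $\phi$ is the identity on $P^I$, and on $(P^I)^\eta$ it is the coordinate-relabelling $(\eta a_i)_{i\in I}\mapsto (\eta a_{\lambda i})_{i\in I}$, which is a bijection because $\lambda\colon I\to I$ is.

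The heart of the argument is the case-by-case comparison of (U1)--(U4) with (K1)--(K4). Cases (U1)/(K1) and (U4)/(K4) are immediate, since $\phi$ is the identity on $P^I$ and carries $(P^I)^\eta$ into $(P^I)^\eta$. For (U2)/(K2), one matches $(a_i)_{i\in I}+_U(\eta b_i)_{i\in I}$ with $\phi((a_i)_{i\in I})+\phi((\eta b_i)_{i\in I})=(a_i)_{i\in I}+(\eta b_{\lambda i})_{i\in I}$; since $\lambda$ is a bijection, the (K2)-condition ``$a_{\lambda i}\leq b_{\lambda i}$ for all $i\in I$'' is equivalent to the (U2)-condition ``$a_i\leq b_i$ for all $i\in I$'', and the values prescribed by (U2) and (K2) agree after the same index relabelling. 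The pivotal case is (U3)/(K3), where $\gamma=\rho\lambda^{-1}$ enters: the (U3)-condition $b_{\rho\lambda^{-1}i}\leq a_i$ for all $i\in I$ becomes, upon substituting $i\mapsto\lambda i$, the condition $b_{\rho i}\leq a_{\lambda i}$ for all $i\in I$, which is precisely the (K3)-condition applied to $\phi((\eta a_i)_{i\in I})+\phi((b_i)_{i\in I})=(\eta a_{\lambda i})_{i\in I}+(b_i)_{i\in I}$; the same substitution identifies the two resulting elements of $(P^I)^\eta$. This careful index-chasing in the (U3)/(K3) case is the main technical obstacle.

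With the partial operations matched in both directions, the PEA axioms satisfied by $(U;+_U,0_U,1_U)$ transfer through the bijection $\phi$ to $(K;+,0_K,1_K)$, so $K$ is a PEA, hence a kite algebra, and $\phi$ is a PEA-isomorphism whose restriction to $P^I$ is the identity.

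For uniqueness, let $\psi\colon U\to K$ be any PEA-morphism whose restriction to $P^I$ is the identity. By Theorem \ref{th:gammaunitization} applied in $U$, for every $(a_i)_{i\in I}\in P^I$ we have $(\eta a_i)_{i\in I}=((a_i)_{i\in I})^{\sim_U}$. Any PEA-morphism preserves right supplements: if $x\in U$, then $x+_U x^{\sim_U}=1_U$, and applying $\psi$ gives $\psi(x)+\psi(x^{\sim_U})=1_K$, so by the uniqueness of right supplements in the PEA $K$ we have $\psi(x^{\sim_U})=(\psi(x))^{\sim_K}$. Applied to $x=(a_i)_{i\in I}\in P^I$, this yields
\[
\psi((\eta a_i)_{i\in I}) = (\psi((a_i)_{i\in I}))^{\sim_K} = ((a_i)_{i\in I})^{\sim_K}.
\]
The identical calculation applies to $\phi$, so $\psi$ and $\phi$ coincide on $(P^I)^\eta$, and hence on all of $U$.
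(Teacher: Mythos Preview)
Your proof is correct and follows essentially the same approach as the paper: a case-by-case verification that $\phi$ matches the partial operations (U1)--(U4) with (K1)--(K4) via the index substitution $i\mapsto\lambda i$, followed by a uniqueness argument based on the relation $(a_i)_{i\in I}+_U(\eta a_i)_{i\in I}=1_U$ and cancellation (equivalently, uniqueness of right supplements) in $K$. The paper's presentation differs only cosmetically, writing out the computations in Cases 1 and 2 explicitly and phrasing uniqueness via cancellation rather than preservation of $^\sim$.
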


\begin{proof}
Obviously, the restriction of $\phi$ to $P\sp{I}$ is the identity
mapping, $\phi 0\sb{U}=0\sb{K}$, $\phi 1\sb{U}=1\sb{K}$, and
$\phi\colon U\to K$ is a bijection. In fact, for all $(a\sb{i})
\sb{i\in I}\in P\sp{I}$, $\phi\sp{-1}(a\sb{i})\sb{i\in I}:=
(a\sb{i})\sb{i\in I}$ and $\phi\sp{-1}((\eta a\sb{i})\sb{i\in I})
=(\eta a\sb{\lambda\sp{-1}i})\sb{i\in I}.$
To complete the proof that $\phi\colon U\to K$ is a PEA-isomorphism,
we have to prove that
\begin{enumerate}
\item If $p,q\in U$ and $p+\sb{U}q$ is defined in $U$, then
 $\phi p+\phi q$ is defined in $K$ and $\phi(p+\sb{U}q)=
 \phi p+\phi q$ and
\item If $s,t\in K$ and $s+t$ is defined in $K$, then
 $\phi\sp{-1}s+\sb{U}\phi\sp{-1}t$ is defined in $U$ and $\phi\sp{-1}
(s+t)=\phi\sp{-1}s+\sb{U}\phi\sp{-1}t$.
\end{enumerate}

To prove condition (i), assume that $p,q\in U$. There are only two
nontrivial cases to consider.

\noindent \emph{Case 1}: $p=(a\sb{i})\sb{i\in I}\in P\sp{I}$ and $q=
(\eta b\sb{i})\sb{i\in I}\in(P\sp{\eta})\sp{I}$.

\noindent \emph{Case 2}: $p=(\eta a\sb{i})\sb{i\in I}\in(P\sp{\eta})\sp{I}$
and $q=(b\sb{i})\sb{i\in I}\in P\sp{I}$.

In Case 1, suppose that $p+\sb{U}q$ is defined in $U$. Then by (U2),
$a\sb{i}\leq b\sb{i}$, whence $a\sb{\lambda i}\leq b\sb{\lambda i}$ for all
$i\in I$, and
$$p+\sb{U}q=(\eta(b\sb{i}\diagdown a\sb{i}))\sb{i\in I}\text{\ for all\ }
i\in I,\text{\ so}$$
\[
\phi p=(a\sb{i})\sb{i\in I},\  \phi q=(\eta b\sb{\lambda i})\sb{i\in I},\
\phi(p+\sb{U}q)=(\eta(b\sb{\lambda i}\diagdown a\sb{\lambda i}))\sb{i\in I},
\]
and by (K2),
$$\phi p+\phi q=(a\sb{i})\sb{i\in I}+(\eta b\sb{\lambda i})\sb{i\in I}:=
(\eta(b\sb{\lambda i}\diagdown a\sb{\lambda i})\sb{i\in I}=\phi(p+\sb{U}q).$$

In Case 2, suppose that $p+\sb{U}q$ is defined in $U$. Then by (U3),
$b\sb{\rho\lambda\sp{-1}i}\leq a\sb{i}$, whence $a\sb{\rho i}\leq b\sb
{\lambda i}$ for all $i\in I$, and
$$p+\sb{U}q=(\eta(b\sb{\rho\lambda\sp{-1}i}\diagup a\sb{i})\sb{i\in I}
\text{\ for all\ }i\in I, \text{\ so}$$
$$\phi p=(\eta a\sb{\lambda i})\sb{i\in I},\ \phi q=(b\sb{i})\sb{i\in I},
\ \phi(p+\sb{U}q)=(\eta(b\sb{\rho i}\diagup a\sb{\lambda i}))\sb{i\in I},$$
and by (K3),
$$\phi p+\phi q=(\eta a\sb{\lambda i})\sb{i\in I}+(b\sb{i})\sb{i\in I}=
(\eta(b\sb{\rho i}\diagup a\sb{\lambda i})\sb{i\in I}=\phi(p+\sb{U}q),$$
completing the proof of (i).

The proof of (ii) is a straightforward computation similar to the proof of
(i), and is therefore omitted. Thus, we may conclude that $\phi$ is an
isomorphism of $U$ onto $K$ that reduces to the identity on $P\sp{I}$;
hence $K$ is a PEA.

Finally, suppose that $\psi\colon U\to K$ is any PEA-morphism such that the
restriction of $\psi$ to $P\sp{I}$ is the identity mapping. According to
(U4) with $b\sb{i}=a\sb{i}$ for all $i\in I$, for all $(a\sb{i})\sb{i\in I}
\in P\sp{I}$,
$$(a\sb{i})\sb{i\in I}+\sb{U}(\eta a\sb{i})\sb{i\in I}=1\sb{U},\text
{\ whence\ }(a\sb{i})\sb{i\in I}+\psi(\eta a\sb{i})\sb{i\in I}=1\sb{K}.$$
But by (K2), we also have
$(a\sb{i})\sb{i\in I}+(\eta a\sb{\lambda i})\sb{i\in I}=1\sb{K},$
and it follows from cancellation that
$$\psi(\eta a\sb{i})\sb{i\in I}=(\eta a\sb{\lambda i})\sb{i\in I}=
\phi(\eta a\sb{i})\sb{i\in I}.$$
Therefore, $\psi=\phi$.
\end{proof}

The proof of the following corollary is now straightforward.

\begin{corollary}\label{co:KCI}
Suppose that {\rm(KCI)} holds. Then, for all $(a\sb{i})\sb{i\in I}$, the
left and right negations on the kite algebra $K$ are as follows{\rm:}
\begin{enumerate}
\item[ ]
 \begin{enumerate}
\item[{\rm(LN)}] $((a\sb{i})\sb{i\in I})\sp{-}=(\eta a\sb{\rho i})
 \sb{i\in I}$ and $((\eta a\sb{i})\sb{i\in I})\sp{-}=(a\sb{\lambda
 \sp{-1}i})\sb{i\in I}$,
\item[{\rm(RN)}] $((a\sb{i})\sb{i\in I})\sp{\sim}=(\eta a\sb{\lambda i})
 \sb{i\in I}$ and $((\eta a\sb{i})\sb{i\in I})\sp{\sim}=(a\sb{\rho
 \sp{-1}i})\sb{i\in I}$,
 \end{enumerate}
\end{enumerate}
and from {\rm(LN)}, we have
\begin{enumerate}
\item[{ }]$((a\sb{i})\sb{i\in I})\sp{--}=(a\sb{\rho\lambda\sp{-1}i})\sb{i\in I}
 =\gamma(a\sb{i})\sb{i\in I}$.
\end{enumerate}
Therefore, $(K;+,0\sb{K},1\sb{K})$ is a {\rm(}binary{\rm)} unitization of
$P\sp{I}$ with $\gamma$ as its unitizing PEA-automorphism.
\end{corollary}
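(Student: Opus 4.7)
The plan is to verify the formulas (LN) and (RN) by direct computation, using the characterization that $a\sp{-}$ is the unique element of $K$ with $a\sp{-}+a=1\sb{K}$ and $a\sp{\sim}$ is the unique element of $K$ with $a+a\sp{\sim}=1\sb{K}$. Since $1\sb{K}=(\eta 0)\sb{i\in I}$ lies in $(P\sp{\eta})\sp{I}$, and since by (K4) no sum of two elements of $(P\sp{\eta})\sp{I}$ is defined, the supplements of an element of $P\sp{I}$ must lie in $(P\sp{\eta})\sp{I}$ and vice versa; this immediately restricts which of the rules (K1)--(K4) can apply in each of the four cases.

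First I would compute $((a\sb{i})\sb{i\in I})\sp{\sim}$: writing the unknown supplement as $(\eta w\sb{i})\sb{i\in I}$ and applying (K2) (which requires $a\sb{\lambda i}\le w\sb{i}$) forces $(\eta(w\sb{i}\diagdown a\sb{\lambda i}))\sb{i\in I}=(\eta 0)\sb{i\in I}$, whence $w\sb{i}=a\sb{\lambda i}$; moreover the order condition holds with equality. A symmetric argument using (K3) yields $((a\sb{i})\sb{i\in I})\sp{-}=(\eta a\sb{\rho i})\sb{i\in I}$. For an element $(\eta a\sb{i})\sb{i\in I}$ of $(P\sp{\eta})\sp{I}$, each supplement must lie in $P\sp{I}$: solving $(z\sb{i})\sb{i\in I}+(\eta a\sb{i})\sb{i\in I}=1\sb{K}$ via (K2) gives $z\sb{\lambda i}=a\sb{i}$, i.e.\ $z\sb{i}=a\sb{\lambda\sp{-1}i}$, while solving $(\eta a\sb{i})\sb{i\in I}+(w\sb{i})\sb{i\in I}=1\sb{K}$ via (K3) gives $w\sb{\rho i}=a\sb{i}$, i.e.\ $w\sb{i}=a\sb{\rho\sp{-1}i}$. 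This produces all four formulas in (LN) and (RN).

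The double-negation identity then follows by applying the formula for $((\eta b\sb{i})\sb{i\in I})\sp{-}$ from (LN) to $b\sb{i}:=a\sb{\rho i}$: indeed $((a\sb{i})\sb{i\in I})\sp{--}=((\eta a\sb{\rho i})\sb{i\in I})\sp{-}=(a\sb{\rho\lambda\sp{-1}i})\sb{i\in I}$, which equals $\gamma(a\sb{i})\sb{i\in I}$ by Definition \ref{df:gamma}. Finally, since Theorem \ref{th:kiteisomorphism} has already established that $K$ is a PEA and is PEA-isomorphic to the $\gamma$-unitization $U$ of $P\sp{I}$ by a map that restricts to the identity on $P\sp{I}$, $K$ is itself a binary unitization of $P\sp{I}$; the computation of $a\sp{--}$ just performed identifies the unitizing GPEA-automorphism corresponding to this unitization (in the sense of the paragraph following Theorem \ref{th:gammaunitization}) as exactly $\gamma$.

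The main obstacle here is not algebraic but purely combinatorial: one must manage the index shifts $\lambda,\rho,\lambda\sp{-1},\rho\sp{-1}$ carefully so that the composition in $((a\sb{i})\sb{i\in I})\sp{--}$ comes out to $\rho\lambda\sp{-1}$ rather than some similar-looking composite, and so that the ordering side conditions in (K2) and (K3) are seen to reduce to trivial equalities in each case.
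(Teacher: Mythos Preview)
Your proposal is correct and is exactly the routine verification the paper has in mind; the paper itself omits the proof entirely, declaring it ``fairly straightforward,'' and your direct computation from (K2), (K3), and the defining equations $x\sp{-}+x=1\sb{K}$, $x+x\sp{\sim}=1\sb{K}$ is precisely that straightforward argument carried out in full.
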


If $G$ is an additively written po-group and if we put $P:=G\sp{+}$
{\rm(}Example \ref{ex:positivecone}{\rm)}, then $P$ is total and the PEA $K$
is what A. Dvure\v{c}enskij originally called a kite PEA in {\rm\cite{DvKite}}.

\section{Congruences and ideals in GPEAs and in their unitizations}

In what follows, we will need some facts about congruences and ideals in GPEAs.

\begin{definition}\label{de:congr}{\rm \cite[Definition 2.1]{XL}} A binary
relation $\sim$ on a GPEA $P$ is called a \emph{weak congruence} iff it
satisfies the following conditions:
\begin{enumerate}
\item[(C1)] $\sim$ is an equivalence relation.
\item[(C2)] If $a\oplus b$ and $a_1\oplus b_1$ both exist, $a\sim a_1$,
 and $b\sim b_1$, then $a\oplus b\sim a_1\oplus b_1$.
\end{enumerate}
A weak congruence is a \emph{congruence} iff it satisfies the following condition:
\begin{enumerate}
\item[(C3)] If $a\oplus b$ exists, then for any $a_1\sim a$ there is a $b_1\sim b$
 such that $a_1\oplus b_1$ exists, and for any $b_2\sim b$ there is an $a_2\sim a$
 such that $a_2\oplus b_2$ exists.
\end{enumerate}
A congruence $\sim$ is called a \emph{c-congruence} iff it satisfies the following
condition:
\begin{enumerate}
\item[(C4)] If $a\sim b$ and either $a\oplus a_1\sim b\oplus b_1$ or
 $a_1\oplus a\sim b_1\oplus b$, then $a_1\sim b_1$.
\end{enumerate}
A congruence $\sim$  is called a \emph{p-congruence} iff it satisfies the following
 condition:
\begin{enumerate}
\item[(C5)] $a\oplus b\sim 0$ implies $a\sim b\sim 0$.
\end{enumerate}
\end{definition}

Let $\sim$ be a weak congruence. Denote by $[a]$ the congruence class containing
$a\in P$, and let $P/{\sim}$ denote the set of all congruence classes (the
\emph{quotient} of $P$ with respect to $\sim$). We shall say that $[a]\oplus [b]$
exists iff there are $a_1,b_1\in P$ with $a_1\sim a,\ b_1\sim b$, such that
$a_1\oplus b_1$ exists, in which case, $[a]\oplus [b]:=[a_1\oplus b_1]$.

\begin{theorem}\label{th:congr} {\rm \cite[Corollary 2.1]{XL}} For a congruence
$\sim$ on a GPEA $P$, the quotient $(P/{\sim}; \oplus,[0])$ is a GPEA iff $\sim$
is  both a c-congruence and a p-congruence.
\end{theorem}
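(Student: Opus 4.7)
The plan is to handle the two directions separately, with the sufficiency direction carrying essentially all of the technical weight. A useful preliminary observation is that conditions C1--C3 alone already ensure that the recipe $[a]\oplus[b]:=[a_1\oplus b_1]$, defined exactly when there exist representatives $a_1\sim a$ and $b_1\sim b$ with $a_1\oplus b_1$ existing in $P$, is a well-defined partial operation on $P/{\sim}$: C2 forces the value to be independent of the choice of representatives, while C3 will be the lifting tool for replacing representatives during the verification of the GPEA axioms in $P/{\sim}$. The content of the theorem is thus that C4 and C5 are precisely what is needed to inherit GPEA3 and GPEA5 at the level of the quotient.

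For necessity, assume $(P/{\sim};\oplus,[0])$ is a GPEA. To get C4, suppose $a\sim b$ and $a\oplus a_1\sim b\oplus b_1$; then in the quotient $[a]\oplus[a_1]=[b]\oplus[b_1]=[a]\oplus[b_1]$, and cancellation (GPEA3) in $P/{\sim}$ forces $[a_1]=[b_1]$. The other clause of C4 is symmetric. For C5, the hypothesis $a\oplus b\sim 0$ translates into $[a]\oplus[b]=[0]$, and positivity (GPEA5) in the quotient gives $[a]=[0]=[b]$.

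For sufficiency, I would assume C4 and C5 on top of C1--C3 and verify GPEA1--GPEA5 in $P/{\sim}$. GPEA4 is immediate from $a\oplus 0=0\oplus a=a$, GPEA2 follows by taking classes in the conjugation identity $a_1\oplus b_1=c\oplus a_1=b_1\oplus d$ obtained from a representative pair, and GPEA5 is a direct rewriting of C5. For GPEA3, assuming $[a]\oplus[c]=[b]\oplus[c]$, unwinding the definition yields $a_1\sim a$, $b_2\sim b$, and $c_1,c_2\sim c$ with $a_1\oplus c_1$ and $b_2\oplus c_2$ defined in $P$ and $a_1\oplus c_1\sim b_2\oplus c_2$; here I would apply C3 to the sum $a_1\oplus c_1$ together with the alternate representative $c_2\sim c_1$ to produce some $a_1'\sim a_1$ with $a_1'\oplus c_2$ defined, after which C2 gives $a_1'\oplus c_2\sim b_2\oplus c_2$ and the second clause of C4 (with $c_2\sim c_2$ as the common right factor) yields $a_1'\sim b_2$, hence $[a]=[b]$. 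The dual statement $[c]\oplus[a]=[c]\oplus[b]\Rightarrow[a]=[b]$ proceeds symmetrically via the first clause of C4.

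The one place that requires genuine care, and is the main technical obstacle, is GPEA1, where C3 is used to align representatives before associativity in $P$ can be invoked. Assuming that $[a]\oplus[b]$ and $([a]\oplus[b])\oplus[c]$ are both defined, I would first pick $a_1\sim a$ and $b_1\sim b$ with $a_1\oplus b_1$ defined in $P$, so that $[a]\oplus[b]=[a_1\oplus b_1]$; then unwinding the existence of $[a_1\oplus b_1]\oplus[c]$ yields $u\sim a_1\oplus b_1$ and $c_1\sim c$ with $u\oplus c_1$ defined, and a single application of C3 produces $c_2\sim c$ for which $(a_1\oplus b_1)\oplus c_2$ exists in $P$. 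At that point GPEA1 in $P$ itself supplies $b_1\oplus c_2$ and $a_1\oplus(b_1\oplus c_2)$ together with the associativity equality, and passing to classes gives both the existence of $[b]\oplus[c]$ and $[a]\oplus([b]\oplus[c])$ and the required identity in $P/{\sim}$. The converse half of the iff in GPEA1 is handled by the symmetric choice of representatives, completing the verification.
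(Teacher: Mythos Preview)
Your argument is correct. The paper does not actually supply its own proof of this statement; it merely quotes the result from \cite[Corollary 2.1]{XL}, so there is nothing to compare against beyond noting that your approach is the natural one and matches the spirit of the cited source: necessity reads off (C4) and (C5) from (GPEA3) and (GPEA5) in the quotient, and sufficiency uses (C3) as the representative-lifting tool to reduce each GPEA axiom in $P/{\sim}$ to the corresponding axiom in $P$, with (C4) supplying cancellation and (C5) supplying positivity.

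Two small cosmetic remarks. In your verification of (GPEA3), the step ``C2 gives $a_1'\oplus c_2\sim b_2\oplus c_2$'' really uses (C2) to get $a_1'\oplus c_2\sim a_1\oplus c_1$ first and then transitivity; this is implicit but worth making explicit. In (GPEA1), your ``symmetric choice of representatives'' for the converse direction is indeed symmetric, but note that it uses the \emph{second} clause of (C3) (replacing the left summand) rather than the first, so the two halves are not literally the same computation.
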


If $P$ is a PEA, then every congruence on $P$ is a c-congruence and p-congruence,
as can be seen from the following theorem.

\begin{theorem}\label{th:congrpea} {\rm \cite{HS}} A weak congruence $\sim$ on a PEA
$P$ is a congruence iff the following conditions hold:
\begin{enumerate}
\item[{\rm(C4$\sp{\prime}$)}] If $a\sim b$, then $a^{\sim}\sim b^{\sim}$ and $a^-\sim b^-$.
\item[{\rm(C5$\sp{\prime}$)}] $a\sim b\oplus c$ implies that there are $a_1,a_2\in P$,
 such that $a_1\sim b,\ a_2\sim c$ and $a=a_1\oplus a_2$.
\end{enumerate}
Moreover, {\rm(C4$\sp{\prime}$)} is equivalent to {\rm(C4)}, and {\rm(C5$\sp{\prime}$)}
implies {\rm(C5)}.
\end{theorem}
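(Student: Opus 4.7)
My plan is to prove the theorem in four steps, corresponding to the implications $(C5')\Rightarrow(C5)$; $(C4)\Leftrightarrow(C4')$; congruence $\Rightarrow(C4')$ and $(C5')$; and its converse. First, $(C5')\Rightarrow(C5)$ follows by specializing $(C5')$ to $a:=0$: if $0\sim b\oplus c$, then $0=a_1\oplus a_2$ with $a_1\sim b$ and $a_2\sim c$, and positivity (GPEA5) forces $a_1=a_2=0$, so $b\sim 0\sim c$. For $(C4)\Leftrightarrow(C4')$, the forward implication uses $a\oplus a\sp{\sim}=1=b\oplus b\sp{\sim}$ whenever $a\sim b$, so $a\oplus a\sp{\sim}\sim b\oplus b\sp{\sim}$ trivially; $(C4)$ then yields $a\sp{\sim}\sim b\sp{\sim}$, and the case $a\sp{-}\sim b\sp{-}$ is symmetric. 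The converse applies $(C4')$ to the hypothesis $(a\oplus a_1)\sp{-}\sim(b\oplus b_1)\sp{-}$, combines with $a\sim b$ via $(C2)$ to get $(a\oplus a_1)\sp{-}\oplus a\sim(b\oplus b_1)\sp{-}\oplus b$, applies $(C4')$ once more, and invokes the subtraction identity $a_1=((a\oplus a_1)\sp{-}\oplus a)\sp{\sim}$ from Lemma~\ref{lm:subtraction}(i) to recover $a_1\sim b_1$.

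The key auxiliary for the forward direction is the following lemma: \emph{if $x\sim 1$, then $x\sp{\sim}\sim 0$ and $x\sp{-}\sim 0$.} To prove it, apply $(C3)$ to $x\oplus x\sp{\sim}=1$ with $x\sim 1$ to produce $y\sim x\sp{\sim}$ with $1\oplus y$ defined; but $1\oplus y$ exists iff $y\leq 1\sp{\sim}=0$, so $y=0\sim x\sp{\sim}$. With this lemma in hand I derive $(C4')$: given $a\sim a_1$, apply $(C3)$ to $a\oplus a\sp{\sim}=1$ to produce $b_1\sim a\sp{\sim}$ with $a_1\oplus b_1$ defined; by $(C2)$, $a_1\oplus b_1\sim 1$, so the lemma gives $(a_1\oplus b_1)\sp{\sim}\sim 0$. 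Cancellation in $a_1\oplus b_1\oplus(a_1\oplus b_1)\sp{\sim}=1=a_1\oplus a_1\sp{\sim}$ yields $a_1\sp{\sim}=b_1\oplus(a_1\oplus b_1)\sp{\sim}$, and $(C2)$ then gives $a_1\sp{\sim}\sim a\sp{\sim}\oplus 0=a\sp{\sim}$; the dual $a_1\sp{-}\sim a\sp{-}$ uses $a\sp{-}\oplus a=1$ analogously.

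The extraction of $(C5')$ from congruence is the step I expect to be the main obstacle, but with $(C4')$ in hand the argument unwinds cleanly. Assume $a\sim b\oplus c$; by $(C4')$, $a\sp{-}\sim(b\oplus c)\sp{-}$. The identity $(b\oplus c)\sp{-}\oplus b=c\sp{-}$ exhibits a defined sum in the PEA, so $(C3)$ produces $y\sim b$ with $a\sp{-}\oplus y$ defined; Theorem~\ref{basicpeas}(vi) together with the involution $a\sp{-\sim}=a$ from Theorem~\ref{basicpeas}(iv) then forces $y\leq a$. By $(C2)$, $a\sp{-}\oplus y\sim c\sp{-}$, and $(C4')$ gives $(a\sp{-}\oplus y)\sp{\sim}\sim c\sp{-\sim}=c$. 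Setting $z:=(a\sp{-}\oplus y)\sp{\sim}$, the equation $a\sp{-}\oplus y\oplus z=1=a\sp{-}\oplus a$ combined with left cancellation (GPEA3) yields $a=y\oplus z$ with $y\sim b$ and $z\sim c$, which is precisely $(C5')$.

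For the converse, assume $(C4')$ and $(C5')$ and verify $(C3)$. Suppose $a\oplus b$ is defined and $a_1\sim a$. By Theorem~\ref{basicpeas}(vi), $b\leq a\sp{\sim}$, so $a\sp{\sim}=b\oplus(b\diagup a\sp{\sim})$; combining with $(C4')$, $a_1\sp{\sim}\sim a\sp{\sim}=b\oplus(b\diagup a\sp{\sim})$. Applying $(C5')$ produces $b_1\sim b$ and some $b_1'$ with $a_1\sp{\sim}=b_1\oplus b_1'$, whence $b_1\leq a_1\sp{\sim}$ and thus $a_1\oplus b_1$ is defined by Theorem~\ref{basicpeas}(vi). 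The symmetric half of $(C3)$, producing $a_2\sim a$ with $a_2\oplus b_2$ defined for any $b_2\sim b$, uses the dual decomposition $b\sp{-}=a\oplus(a\diagup b\sp{-})$ together with $(C4')$ applied to $b_2\sp{-}\sim b\sp{-}$ and a parallel invocation of $(C5')$.
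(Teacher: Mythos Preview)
The paper does not supply its own proof of this theorem; it is stated with a citation to \cite{HS} and used as a black box thereafter. Consequently there is no in-paper argument to compare against.

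Your proof is correct and self-contained. The four-step organization is sound: the specialization giving $(C5')\Rightarrow(C5)$ is immediate; the equivalence $(C4)\Leftrightarrow(C4')$ uses only the weak-congruence axioms $(C1)$, $(C2)$ together with the PEA identities $a\oplus a^{\sim}=a^{-}\oplus a=1$ and $a^{-\sim}=a$, exactly as you indicate; and your derivation of $(C3)$ from $(C4')$ and $(C5')$ via the decompositions $a^{\sim}=b\oplus(b\diagup a^{\sim})$ and $b^{-}=a\oplus(a\diagup b^{-})$ is clean. The auxiliary lemma ``$x\sim 1\Rightarrow x^{\sim}\sim 0$'' is the right pivot for extracting $(C4')$ from a congruence, and your subsequent use of it (together with the cancellation $a_{1}^{\sim}=b_{1}\oplus(a_{1}\oplus b_{1})^{\sim}$) to pass from $b_{1}\sim a^{\sim}$ to $a_{1}^{\sim}\sim a^{\sim}$ is valid. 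The derivation of $(C5')$ from congruence via $(b\oplus c)^{-}\oplus b=c^{-}$ and the identity $a=y\oplus(a^{-}\oplus y)^{\sim}$ is likewise correct. One small remark: in the $(C4')\Rightarrow(C4)$ direction you only write out the case $a\oplus a_{1}\sim b\oplus b_{1}$; the case $a_{1}\oplus a\sim b_{1}\oplus b$ is genuinely dual (use $(\,\cdot\,)^{\sim}$ in place of $(\,\cdot\,)^{-}$ and the identity $a_{1}=(a\oplus(a_{1}\oplus a)^{\sim})^{-}$), so your ``symmetric'' is justified.
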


Of course, if $P$ is a PEA and $\sim$ is a congruence on $P$, then $P/\sim$ is
also a PEA.

\begin{definition}\label{de: riesz} An ideal $I$ in a GPEA $P$ is called an
\emph{R1-ideal} iff the following condition holds:
\begin{enumerate}
\item[(R1)] If $i\in I$, $a,b\in P$, $a\oplus b$ exists, and $i\leq a\oplus b$, then
 there  are $j,k\in I$ such that $j\leq a,\  k\leq b$ and $i\leq j\oplus k$.
\end{enumerate}
An R1-ideal $I$ is called a \emph{Riesz ideal} iff the following condition holds:
\begin{enumerate}
\item[(R2)] If $i\in I$, $a,b\in P$, and $i\leq a$, then (i) if $(a\diagdown i)
 \oplus b$ exists, then there is $j\in I$ such that $j\leq b$ and $a\oplus(j\diagup b)$
 exists, and (ii) if $b\oplus(i\diagup a)$ exists, then there is $k\in I$ such that $k
 \leq b$ and $(b\diagdown k)\oplus a$ exists.
\end{enumerate}
\end{definition}

\begin{definition} \label{df:simsbI}
For an ideal $I$ in a GPEA $P$, we define $a\sim\sb{I}b$ ($a,b\in P$) iff there
exist $i,j\in I$, $i\leq a,\  j\leq b$ such that $a\diagdown i=b\diagdown j$.
\end{definition}
Notice that if $I$ is a normal ideal in the GPEA $P$, then the condition in
Definition \ref{df:simsbI} that $a\diagdown i=b\diagdown j$ with $i,j\in I$ is
equivalent to the condition that $i\diagup a=j\diagup b$ (see \cite[Remark 2.4]{XL}).
Also, if $i\in I$ and $i\leq a\in P$, then $(a\diagdown i)\diagdown 0=a\diagdown i$,
so $a\diagdown i\sim\sb{I} a$.

\begin{theorem}\label{th:R1congr}{\rm\cite[Theorem 2.3]{XL}} If $I$ is a normal
R1-ideal in a GPEA $P$, then $\sim_I$  satisfies {\rm(C1), (C2)} and
{\rm(C5$\sp{\prime}$)}. Moreover, $a\sim_I0$ iff $a\in I$.
\end{theorem}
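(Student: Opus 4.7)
The plan is to verify the three properties (C1), (C2), and (C5$'$) separately together with the auxiliary biconditional, making repeated use of (R1) and the normality of $I$. Throughout I would freely exploit the observation from \cite[Remark 2.4]{XL} that, because $I$ is normal, the defining condition $a\diagdown i = b\diagdown j$ in Definition \ref{df:simsbI} may be replaced equivalently by $i\diagup a = j\diagup b$, so that one can switch between left- and right-subtraction descriptions of $\sim_I$ at will.

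First I would dispose of the last claim. If $a\in I$, the witnesses $i:=a$ and $j:=0$ give $a\diagdown i = 0 = 0\diagdown j$, so $a\sim_I 0$. Conversely, if $a\sim_I 0$ via $i\leq a$, $j\leq 0$ in $I$, then $j=0$, so $a\diagdown i = 0\diagdown 0 = 0$ forces $a=i\in I$. For (C1), reflexivity ($i=j=0$) and symmetry are immediate from the definition; only transitivity requires real work. Given $a\sim_I b$ via $(i_1,j_1)$ with common quotient $d := a\diagdown i_1 = b\diagdown j_1$ and $b\sim_I c$ via $(j_2,k_2)$ with common quotient $e := b\diagdown j_2 = c\diagdown k_2$, I would exploit the identity $d\oplus j_1 = b = e\oplus j_2$ and apply (R1) to $j_2 \leq d\oplus j_1$ (and symmetrically to $j_1 \leq e\oplus j_2$) to split these ideal elements along the two decompositions of $b$; combining the refinements and invoking normality to pass between the two forms of subtraction should then yield witnesses in $I$ realizing a common quotient for $a$ and $c$.

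For (C2), suppose $a\sim_I a_1$ and $b\sim_I b_1$ with witnesses in $I$, and that both $a\oplus b$ and $a_1\oplus b_1$ exist. I would use (R1) together with normality to construct $e, e_1\in I$ satisfying $(a\oplus b)\diagdown e = (a_1\oplus b_1)\diagdown e_1$, built from the original witnesses. For (C5$'$), assume $a\sim_I b\oplus c$ via $i\leq a$ and $j\leq b\oplus c$ with common quotient $d := a\diagdown i = (b\oplus c)\diagdown j$. Applying (R1) to $j\leq b\oplus c$ produces $j_1\leq b$ and $j_2\leq c$ in $I$ with $j\leq j_1\oplus j_2$. The task is to exhibit an explicit decomposition $a = a_1\oplus a_2$, with $a_1$ assembled from $b$, $j_1$ and the residual ideal piece $(j_1\oplus j_2)\diagdown j$, and $a_2$ assembled analogously from $c$ and $j_2$; the relations $a_1\sim_I b$ and $a_2\sim_I c$ are then checked directly against the definition.

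The main obstacle will be the transitivity step of (C1) and the decomposition in (C5$'$). In both cases (R1) only delivers ``$\leq$'' refinements of ideal elements along an orthosum, whereas the definition of $\sim_I$ demands equalities of subtractions; bridging this gap is what forces the use of normality to interchange left and right subtraction and to absorb the ``slack'' between $j$ and $j_1\oplus j_2$ into an element of $I$. Once these two key manipulations are executed carefully, the remaining verifications reduce to routine bookkeeping with the partial operations $\oplus$, $\diagup$, and $\diagdown$.
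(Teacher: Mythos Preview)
The paper does not supply its own proof of this theorem: it is quoted verbatim as \cite[Theorem 2.3]{XL} and used as a black box, so there is nothing in the present manuscript to compare your argument against line by line.

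That said, your outline matches the standard proof given in \cite{XL}. The easy parts (the biconditional $a\sim_I 0\Leftrightarrow a\in I$, reflexivity, symmetry) are handled exactly as one would expect, and you have correctly isolated transitivity and (C5$'$) as the places where the real work occurs. Your strategy---use (R1) to refine an ideal element along an orthosum, then use normality of $I$ to switch between the $\diagdown$- and $\diagup$-formulations of $\sim_I$ so as to absorb the slack $(j_1\oplus j_2)\diagdown j\in I$---is precisely the mechanism employed in \cite{XL}. The one caution is that your sketch of (C2) is very thin; in the actual argument one typically first proves transitivity and the auxiliary fact that $i\in I$, $i\leq a$ imply $a\diagdown i\sim_I a$ (and dually $i\diagup a\sim_I a$), and then reduces (C2) to a chain of such elementary equivalences rather than producing the witnesses $e,e_1$ directly. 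With that adjustment your plan is sound.
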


\begin{theorem}\label{th:rieszcongr}{\rm \cite[Corollary 2.2]{XL}} If $I$ is a
normal Riesz ideal in a GPEA $P$, then $\sim_I$ is a c-congruence and p-congruence,
hence $P/{\sim_I}$ is a GPEA.
\end{theorem}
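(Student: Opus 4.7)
The plan is to build on Theorem \ref{th:R1congr} and Theorem \ref{th:congrpea}, then upgrade (C1), (C2), (C5$'$) to the full package (C3), (C4), (C5) using the Riesz property (R2) together with normality, so that Theorem \ref{th:congr} applies. Every Riesz ideal satisfies (R1), so Theorem \ref{th:R1congr} gives (C1), (C2), (C5$'$) for free, along with the identification $a\sim_I 0\iff a\in I$. The last sentence of Theorem \ref{th:congrpea} then yields (C5) from (C5$'$), so $\sim_I$ is already a p-congruence. What remains is (C3) and (C4).

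For (C3), suppose $a\oplus b$ exists and $a_1\sim_I a$. Pick witnesses $i,j\in I$ with $i\leq a_1$, $j\leq a$ and $a_1\diagdown i=a\diagdown j$; by the normality-based equivalence recorded after Definition \ref{df:simsbI}, this is the same as $i\diagup a_1=j\diagup a$. From $a=j\oplus(j\diagup a)$ and (GPEA1) applied to $a\oplus b$, one extracts that $(j\diagup a)\oplus b$, hence $(i\diagup a_1)\oplus b$, is defined. Now invoke (R2) to produce an element $k\in I$ with $k\leq b$ and a one-sided subtraction $b_1$ of $k$ from $b$ such that $a_1\oplus b_1$ is defined; the witness $b_1\sim_I b$ is verified using $p=0$, $q=k$ in the $\diagup$-formulation of the congruence. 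The mirror-image part of (C3) (beginning from $b_2\sim_I b$) is handled by the other branch of (R2).

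For (C4), suppose $a\sim_I b$ and $a\oplus a_1\sim_I b\oplus b_1$. Unpack both congruences via Definition \ref{df:simsbI}, and then use (R1) to refine the ideal witness of $a\oplus a_1\sim_I b\oplus b_1$ into a sum of two ideal elements, one bounded above by $a$ (respectively $b$) and one bounded above by $a_1$ (respectively $b_1$). Combining with the $a\sim_I b$ witnesses, Lemma \ref{le:normalidealprops1} lets us push the $a$-versus-$b$ contributions into $I$, and what is left witnesses $a_1\sim_I b_1$. The assumption $a_1\oplus a\sim_I b_1\oplus b$ is treated by the dual argument using the left-subtraction form. With (C3) and (C4) established, $\sim_I$ is a c-congruence and a p-congruence, and Theorem \ref{th:congr} immediately delivers the GPEA structure on $P/\sim_I$.

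The main obstacle is (C3). Non-commutativity means that left and right subtractions are genuinely different operations, and (R2) splits into two branches according to which side the ideal element sits on; matching the hypothesis $a\oplus b$ against the correct branch demands both the normality-based swap between the $\diagdown$- and $\diagup$-forms of $\sim_I$ and careful use of associativity. The subtle point is producing a $b_1$ that is truly $\sim_I$-equivalent to $b$ rather than merely comparable, which is exactly why (R2) is needed in its full strength and why (R1) alone (as in Theorem \ref{th:R1congr}) is not enough.
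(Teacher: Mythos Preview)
The paper does not give its own proof of this theorem; it is quoted verbatim from \cite[Corollary~2.2]{XL}. So there is nothing to compare your argument against except the external reference. Your overall strategy---(C1), (C2), (C5$'$) from Theorem~\ref{th:R1congr}, then (C5) from (C5$'$), then (C3) via (R2) and (C4) via (R1)---is the standard one and is essentially what \cite{XL} does.

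That said, your (C3) sketch has a genuine mismatch with (R2) as stated in \emph{this} paper. You correctly obtain $(j\diagup a)\oplus b$ from $a=j\oplus(j\diagup a)$ and associativity, and hence $(i\diagup a_1)\oplus b$ via the $\diagup$-form of the congruence. But neither branch of (R2) in Definition~\ref{de: riesz} takes that as hypothesis: (R2)(i) wants $(a_1\diagdown i)\oplus b$, and (R2)(ii) wants $b\oplus(i\diagup a_1)$. The fix is short but not automatic: use (GPEA2) on $a_1=i\oplus(i\diagup a_1)$ to write $a_1=(i\diagup a_1)\oplus i'$ for some $i'$, then normality of $I$ gives $i'\in I$, and now $a_1\diagdown i'=i\diagup a_1$, so (R2)(i) applies and yields $k\in I$, $k\leq b$, with $a_1\oplus(k\diagup b)$ defined. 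The verification that $k\diagup b\sim_I b$ again needs a conjugation-plus-normality step to produce a $\diagdown$-witness. You should make these two conjugation steps explicit rather than folding them into ``invoke (R2).''

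Two smaller points. Your appeal to ``the last sentence of Theorem~\ref{th:congrpea}'' for (C5$'$)$\Rightarrow$(C5) is formally about PEAs; the implication is true in any GPEA, but the one-line reason is (GPEA5) applied to the decomposition $0=c_1\oplus c_2$ coming from (C5$'$), not Theorem~\ref{th:congrpea}. And your (C4) paragraph is too compressed to check: after splitting the ideal witnesses via (R1) you still need several cancellation and normality steps to isolate a common value for $a_1$ and $b_1$; as written, ``what is left witnesses $a_1\sim_I b_1$'' is an assertion, not an argument.
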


Recall that a poset $V$ is \emph{upward directed} iff for any $a,b\in V$ there is
$c\in V$ with $a,b\leq c$. Similarly $V$ is \emph{downward directed} iff for any
$a,b\in V$ there is $d\in V$ with $d\leq a,b$.

\begin{theorem}\label{th:upwardR1}{\rm \cite[Proposition 2.2]{XL}} In an upward
directed GPEA $P$, an ideal $I$ is a Riesz ideal iff $I$ is an R1-ideal.
\end{theorem}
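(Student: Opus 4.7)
The implication ``Riesz ideal implies R1-ideal'' is immediate from Definition~\ref{de: riesz} and needs no upward-directedness hypothesis. For the converse, suppose $P$ is upward directed and $I$ is an R1-ideal; the task is to verify~(R2). Since (R2)(i) and (R2)(ii) are structural mirror images of one another under left/right reversal, it suffices to treat (R2)(i); (R2)(ii) then follows by the dual argument. So fix $i\in I$ with $i\leq a$ and assume $(a\diagdown i)\oplus b$ exists. Set $a':=a\diagdown i$, so $a=a'\oplus i$ and $a'\oplus b$ is defined; the goal is to produce $j\in I$ with $j\leq b$ such that $a\oplus(j\diagup b)$ exists.

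My first move is to invoke upward directedness to introduce a common upper bound. Pick $e\in P$ with $a\leq e$ and $a'\oplus b\leq e$, and choose $c,c'\in P$ with $a\oplus c=e=(a'\oplus b)\oplus c'$. Expanding $a=a'\oplus i$ and applying associativity~(GPEA1) rewrites both sides as $a'\oplus(i\oplus c)=a'\oplus(b\oplus c')$, and cancellation~(GPEA3) of the leading $a'$ yields the key identity $i\oplus c=b\oplus c'$. In particular $i\leq b\oplus c'$, so R1 applied to $i\in I$ supplies $j,k\in I$ with $j\leq b$, $k\leq c'$, and $i\leq j\oplus k$. I take $r:=j\diagup b$ (so $j\oplus r=b$) as the candidate. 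The plan is to establish existence of $a\oplus r$ by proving $r\leq c$: once this holds, $c=r\oplus(r\diagup c)$ gives $a\oplus c=(a\oplus r)\oplus(r\diagup c)$, and GPEA1 forces $a\oplus r$ to exist.

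To obtain $r\leq c$ I would unpack $i\oplus c=b\oplus c'$ step by step. Cancellation in the chain $i\leq j\oplus k\leq i\oplus c$ yields $t\in P$ with $i\oplus t=j\oplus k$ and $t\leq c$, so $c=t\oplus u$ for $u:=t\diagup c$; similarly write $c'=k\oplus s$ with $s:=k\diagup c'$. Substituting $b=j\oplus r$ and $c'=k\oplus s$ into $i\oplus c=b\oplus c'$ and cancelling $j$ on the left reduces the identity to $k\oplus u=r\oplus k\oplus s$. The principal obstacle now appears: to cancel the $k$ on the left I must first reorder $r\oplus k$, which I would do via conjugation~(GPEA2), writing $r\oplus k=k\oplus r^{\sharp}$ for the unique $r^{\sharp}\in P$ so determined. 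Cancellation then gives $u=r^{\sharp}\oplus s$, whence $r^{\sharp}\leq u\leq c$. The last and most delicate step is to lift this bound from the conjugate $r^{\sharp}$ back to $r$ itself, which requires a further combined use of GPEA2 and GPEA3. In the commutative (GEA) case the conjugate $r^{\sharp}$ coincides with $r$ and this issue is vacuous, but in a genuinely noncommutative GPEA this reordering is the principal bookkeeping difficulty in the argument. Once $r\leq c$ is secured, $a\oplus r$ exists, so (R2)(i) is verified; (R2)(ii) then follows by the mirror argument with left and right subtraction interchanged throughout.
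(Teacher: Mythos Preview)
The paper does not supply its own proof of this statement; it is quoted verbatim from \cite[Proposition 2.2]{XL}. So there is no in-paper argument to compare against, and I can only assess the soundness of your sketch on its own terms.

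Your reduction is correct up to and including the identity $k\oplus u=r\oplus k\oplus s$ and the conjugation $r\oplus k=k\oplus r^{\sharp}$ yielding $r^{\sharp}\leq u\leq c$. The problem is the final step, which you describe but do not carry out: passing from $r^{\sharp}\leq c$ to $r\leq c$ (equivalently, to the existence of $a\oplus r$). This is not a routine consequence of (GPEA2) and (GPEA3). The conjugation relation $r\oplus k=k\oplus r^{\sharp}$ ties $r$ and $r^{\sharp}$ together only through $k$; it gives no control over how $r$ sits below $c$. Unwinding everything you have established, the existence statements you can actually extract are $(a'\oplus j)\oplus r$ exists (which is just $a'\oplus b$ again) and $(a\oplus t)\oplus r^{\sharp}$ exists; neither yields $(a'\oplus i)\oplus r=a\oplus r$. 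In effect, you need to replace the factor $j\in I$ by the factor $i\in I$ in the orthosum $a'\oplus j\oplus r$ while preserving existence---but that substitution is exactly the content of (R2), so the argument is circular at this point.

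In the commutative (GEA) case your sketch closes, since $r^{\sharp}=r$, and this is indeed how the analogous result for GEAs is proved. For genuine GPEAs you would need either a different organization of the R1 application (so that the element playing the role of $k$ can be cancelled without conjugating $r$), or an argument that does not aim for $r\leq c$ at all; as written, the proof is incomplete.
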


\begin{definition} \label{df:Rieszcongruence}
We say that a congruence $\sim$ on a GPEA $P$ is a \emph{Riesz congruence} iff it
satisfies {\rm(C4), (C5$\sp{\prime}$)} and the following condition:
\begin{enumerate}
\item[(CR)] If $a\sim b$, then there are $c,d\in P$, such that $c\leq a\leq d$,
 $c\leq b\leq d$, $a\diagdown c\sim b\diagdown c\sim 0$, and $d\diagdown a\sim d
 \diagdown b\sim 0$.
\end{enumerate}
\end{definition}

\begin{theorem}\label{th:upward}{\rm \cite[Lemma 2.4]{XL}} If $I$ is a normal Riesz
ideal in an upward directed GPEA $P$, then $\sim\sb{I}$ is a Riesz congruence.
\end{theorem}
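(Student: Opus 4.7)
The plan is to verify the three conditions {\rm(C4)}, {\rm(C5$\sp{\prime}$)}, and {\rm(CR)} that define a Riesz congruence. By Theorem \ref{th:rieszcongr}, the relation $\sim\sb{I}$ is already a c-congruence on $P$, which supplies {\rm(C4)}; by Theorem \ref{th:R1congr}, it satisfies {\rm(C5$\sp{\prime}$)} as well. All the real work therefore concentrates on establishing {\rm(CR)}, which I split into producing a lower bound $c$ and an upper bound $d$ for the pair $a,b$ whenever $a\sim\sb{I}b$.

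For the lower bound, I unpack Definition \ref{df:simsbI} to pick $i,j\in I$ with $i\leq a$, $j\leq b$, and $a\diagdown i=b\diagdown j$, then set $c:=a\diagdown i=b\diagdown j$. Then $c\leq a,b$, $a=c\oplus i$, $b=c\oplus j$, and Lemma \ref{le:normalidealprops1} applied with $a=c$ and $b\in\{i,j\}$ gives $a\diagdown c,b\diagdown c\in I$. Since $x\in I\Leftrightarrow x\sim\sb{I}0$ by Theorem \ref{th:R1congr}, this yields $a\diagdown c\sim\sb{I}0\sim\sb{I}b\diagdown c$, as required.

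The substantive step is the upper bound. Using upward directedness of $P$, I pick $e\geq a,b$ and set $w:=c\diagup e$, so $e=c\oplus w$; cancellation in $c\oplus w=e=c\oplus i\oplus(a\diagup e)=c\oplus j\oplus(b\diagup e)$ yields $w=i\oplus(a\diagup e)=j\oplus(b\diagup e)$, and in particular $i,j\leq w$. Since $j\in I$ lies below the existing orthosum $i\oplus(a\diagup e)$, condition (R1) supplies $j\sb{1},j\sb{2}\in I$ with $j\sb{1}\leq i$, $j\sb{2}\leq a\diagup e$, and $j\leq j\sb{1}\oplus j\sb{2}$. Set $k:=i\oplus j\sb{2}$: this orthosum exists because $j\sb{2}\leq a\diagup e$ and $i\oplus(a\diagup e)$ exists; $k\in I$ since $I$ is closed under $\oplus$; $i\leq k$ is immediate; $j\leq j\sb{1}\oplus j\sb{2}\leq i\oplus j\sb{2}=k$ holds by monotonicity of $\oplus$ in the first argument; and $k=i\oplus j\sb{2}\leq i\oplus(a\diagup e)=w$. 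Now $d:=c\oplus k$ exists (monotonicity, since $k\leq w$ and $c\oplus w=e$), satisfies $d\geq a,b$ by the same monotonicity applied to $i\leq k$ and $j\leq k$, and decomposes as $d=a\oplus k\sb{1}=b\oplus k\sb{2}$ with $k\sb{1}:=i\diagup k$ and $k\sb{2}:=j\diagup k$ both bounded by $k\in I$, hence in $I$. Applying Lemma \ref{le:normalidealprops1} once more (with $(a,b)$ taken to be $(a,k\sb{1})$ and $(b,k\sb{2})$) gives $d\diagdown a,d\diagdown b\in I$, so $d\diagdown a\sim\sb{I}0\sim\sb{I}d\diagdown b$.

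The principal difficulty in this plan is the construction of $k$: it must simultaneously lie in $I$, dominate both $i$ and $j$, and sit beneath $w$ so that $c\oplus k$ is defined. Upward directedness of $P$ alone furnishes the common upper bound $w$ of $i$ and $j$ above $c$, but $w$ need not belong to $I$; it is exactly the (R1) ingredient of the Riesz property that allows one to trim $w$ down to an $I$-element while preserving dominance over $i$ and $j$.
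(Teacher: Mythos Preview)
Your argument is correct. The paper does not supply its own proof of this statement; it simply cites \cite[Lemma 2.4]{XL}. Your verification of (C4) and (C5$'$) via Theorems \ref{th:rieszcongr} and \ref{th:R1congr} is appropriate, and your construction for (CR) is sound: the lower bound $c=a\diagdown i=b\diagdown j$ works directly from Definition \ref{df:simsbI} together with Lemma \ref{le:normalidealprops1}, and for the upper bound your use of upward directedness to produce $e$, the cancellation giving $w=i\oplus(a\diagup e)=j\oplus(b\diagup e)$, and then the (R1) property to extract $j_2\le a\diagup e$ so that $k:=i\oplus j_2\in I$ dominates both $i$ and $j$ while remaining $\le w$, all go through as you describe. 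The monotonicity and existence facts you invoke for $\oplus$ in a GPEA (e.g.\ that $j_1\le i$ and the existence of $j_1\oplus j_2$ and $i\oplus j_2$ give $j_1\oplus j_2\le i\oplus j_2$) follow from (GPEA1) and cancellation, so there are no gaps.
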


\begin{theorem}\label{th:newtheor} Let $P$ be a GPEA. Then{\rm:}

{\rm(i)} A congruence $\sim$ on $P$ satisfying {\rm(C4), (C5$\sp{\prime}$)}
is a Riesz congruence iff  every equivalence class is both downward and
upward directed.

{\rm(ii)} If $\sim$ is a Riesz congruence on $P$, then $I:=\{i\in P:i\sim 0\}$
is a normal Riesz ideal in $P$. Moreover, $\sim =\sim_I$.
\end{theorem}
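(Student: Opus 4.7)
For part (i), the forward implication starts from condition (CR): given $a\sim b$, the elements $c,d$ it supplies satisfy $a\diagdown c\sim 0$, so by (C2) applied with $c\sim c$ we obtain $a=(a\diagdown c)\oplus c\sim 0\oplus c=c$, and similarly $d\sim a$; hence $c,d$ both lie in the equivalence class $[a]=[b]$ and serve as a common lower bound and upper bound, proving that every class is both downward and upward directed. Conversely, if every class is directed both ways, then given $a\sim b$ we pick $c,d\in[a]$ with $c\leq a,b$ and $a,b\leq d$; the second form of (C4), applied to $c\sim c$ together with $(a\diagdown c)\oplus c=a\sim c=0\oplus c$, forces $a\diagdown c\sim 0$, and the same argument handles $b\diagdown c$, $d\diagdown a$, and $d\diagdown b$, yielding (CR).

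For part (ii), I set $I:=\{i\in P:i\sim 0\}$ and first verify that it is a normal ideal. The order-ideal property follows by applying (C5$'$) to $0\sim a=(a\diagdown b)\oplus b$ for $b\leq a\in I$ and invoking positivity (GPEA5) to force the resulting decomposition $0=x_1\oplus x_2$ to have $x_1=x_2=0$, whence $b\sim 0$. Closure under defined orthosums is immediate from (C2). For normality, if $a\oplus c=c\oplus b$ with $a\in I$, then (C2) gives $c\oplus b\sim c\oplus 0$ and (C4) produces $b\sim 0$. Next, for $\sim\,=\,\sim_I$, I use (CR) to pick a common lower bound $c\leq a,b$ in the class of $a$ and set $i:=a\diagdown c,\,j:=b\diagdown c\in I$; then $i\oplus c=a$ and $j\oplus c=b$ give $i\diagup a=c=j\diagup b$, and since $I$ is normal this is equivalent (by the remark after Definition \ref{df:simsbI}) to $a\diagdown i=b\diagdown j$, so $a\sim_I b$. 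Conversely, if $a\sim_I b$ with $a=e\oplus i$, $b=e\oplus j$, and $i,j\in I$, two applications of (C2) yield $a\sim e\sim b$, so $a\sim b$ by transitivity.

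It remains to verify conditions (R1) and (R2) for $I$. For (R2)(i), given $i\in I$ with $i\leq a$ and $(a\diagdown i)\oplus b$ defined, (C2) shows $a\sim a\diagdown i$, so (C3) supplies $b'\sim b$ with $a\oplus b'$ defined; (CR) applied to $b\sim b'$ gives $c\leq b,b'$ in $[b]$, and (GPEA1) combined with $c\leq b'$ forces $a\oplus c$ to exist; setting $j:=b\diagdown c\in I$ yields $j\diagup b=c$, so $a\oplus(j\diagup b)=a\oplus c$ exists. Condition (R2)(ii) follows symmetrically. The principal technical hurdle is (R1): given $i\in I$ with $i\leq a\oplus b$, my plan is to write $a\oplus b=((a\oplus b)\diagdown i)\oplus i$ and use (C5$'$) applied to $(a\oplus b)\diagdown i\sim a\oplus b$ to obtain a refinement $(a\oplus b)\diagdown i=d_1\oplus d_2$ with $d_1\sim_I a$ and $d_2\sim_I b$; unpacking the $\sim_I$-relations yields $d_1=e_1\oplus i_1,\,a=e_1\oplus j_1$ and $d_2=e_2\oplus i_2,\,b=e_2\oplus j_2$ with $i_1,j_1,i_2,j_2\in I$. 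Equating the two resulting expressions for $a\oplus b$ and cancelling $e_1$ produces an identity of the form $j_1\oplus e_2\oplus j_2=i_1\oplus e_2\oplus i_2\oplus i$, at which point (GPEA2) and the normality of $I$ are used to commute the $I$-elements past $e_2$; successive cancellations via (GPEA3) should then yield the desired inequality $i\leq j\oplus k$ for $I$-elements $j\leq a$ and $k\leq b$. Managing this non-commutative bookkeeping carefully, in particular ensuring that the $I$-elements produced after conjugation lie in the correct order ideals below $a$ and $b$, is the main obstacle I foresee.
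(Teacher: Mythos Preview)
Your treatment of part~(i) and most of part~(ii)---the normal-ideal axioms, the identity $\sim=\sim_I$, and condition (R2)---is essentially the paper's proof, with only cosmetic differences (for the order-ideal property you use (C5$'$) together with (GPEA5), where the paper invokes the p-congruence condition (C5) directly; both are fine).

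The one substantive divergence is (R1). The paper does not argue it at all: it simply says the proof is the same as in \cite[Lemma~2.3]{XL}. Your direct attack---decompose $(a\oplus b)\diagdown i$ via (C5$'$), unpack the resulting $\sim_I$-relations, and cancel---is the natural strategy, but the obstacle you flag at the end is genuine and your sketch does not resolve it. After conjugating an $I$-element $q$ past $e_2$ via (GPEA2), normality guarantees the conjugate $q'$ lies in $I$, but nothing in your setup forces $q'\leq a$; so the inequality $i\leq q'\oplus s$ you reach is not yet the conclusion of (R1). A routine rearrangement of the bookkeeping does not fix this: one must choose carefully \emph{which} form of $\sim_I$ (left versus right subtraction) to invoke at each step, and in what order to cancel, so that the surviving $I$-elements are ones already known to sit below $a$ and $b$ rather than their conjugates. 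As written, your (R1) argument is a plan with a correctly identified gap, not a proof; you should either consult the argument in \cite{XL} or supply a self-contained version that actually controls where the conjugated elements land.
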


\begin{proof} (i) If $\sim$ is a Riesz congruence, then every equivalence class
is upward and downward directed by \cite[Proposition 2.8 (1)]{XL}. Conversely, assume
that $\sim$ is a congruence such that every equivalence class is downward and upward
directed and suppose that $a\sim b$. Then there exist $c,d$ such that $c\leq a\leq d$,
$c\leq b\leq d$, and $c\sim a\sim b\sim d$. By (C4) we then obtain $a\diagdown c\sim b
\diagdown c\sim 0$, and $d\diagdown a \sim d\diagdown b\sim 0$. Hence $\sim$ satisfies
(CR).

(ii) If $x\in I$ and $y\leq x$, then $(x\diagdown y)\oplus y\sim 0$. Since $\sim$ is a
p-congruence, $y\sim 0$ and $y\in I$. If $i\sim 0, j\sim 0$ and $i\oplus j$ exists,
then $i\oplus j\sim 0$. Thus $I$ is an ideal.  Assume $i\in I$, $a,j\in P$ and
$a\oplus i=j\oplus a$. Then $0\oplus a\sim a\oplus i=j\oplus a$, and by (C4) we have
$0\sim j$, whence $j\in I$. Thus $I$ is a normal ideal.

The proof of the R1 property is the same as in the proof of \cite[Lemma 2.3]{XL}.

To prove R2, assume that $a,b\in P,\ i\in I$, and $(a\diagdown i)\oplus b$ exists.
Thus $a\diagdown i\sim a$ and by (C3), there is $b_1\sim b$ such that $a\oplus b_1$
exists. Then there is $b_2\leq b,b_1$ with $b_2\sim b$. From this we get $b_2=
j\diagup b$ and $a\oplus b_2$ exists. Finally, from $b_2=j\diagup b\sim b$ we get
by (C4) that $j\sim 0$. To prove the remaining statement, assume first that $a\diagdown i=b\diagdown j$ with $i,j\in I$. Then by (C4),
$a\diagdown i\oplus i\sim b\diagdown j\oplus j$, hence $a\sim_I b$ implies $a\sim b$. Conversely, assume that $a\sim b$. Then by (CR), there is $c\in P$ with $a\diagdown c \sim b\diagdown c \sim 0$, and
with $i=a\diagdown c\in I$, $j=b\diagdown c\in I$ we have $i\diagup a=c=j\diagup b$, whence $a\sim_I b$.

\end{proof}

From here on in this paper, \emph{we shall be considering the situation in which
$P$ is a GPEA, $\gamma\colon P\to P$ is a unitizing GPEA-automorphism on $P$, and
the PEA $U$ is the $\gamma$-unitization of $P$ as per Theorem
\ref{th:gammaunitization}. Thus, $P\sp{\eta}$ is disjoint from $P$, $U=P\cup
P\sp{\eta}$, and $\eta\colon P\to P\sp{\eta}$ is a bijection. Moreover, for
all $a\in P$, $\eta a=a\sp{\sim}$, $\gamma a=a\sp{--}$, and $\gamma\sp{-1}a=
a\sp{\sim\sim}$. In what follows, we use $a,b,c,d,$ and $e$, with or without
subscripts, to denote elements of $P$.}

\begin{theorem}\label{th:Pnormalriesz} Let $P$ be a GPEA, and let $U$ be its
$\gamma$-unitization. Then $P$ is a normal Riesz ideal in $U$ iff $P$ is upward
directed.
\end{theorem}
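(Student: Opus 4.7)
Two preliminary reductions simplify the task. First, \cite[Theorem~3.3]{FoPuUnit} guarantees that $P$ is always a normal ideal of its $\gamma$-unitization $U$, so the ``normal'' hypothesis is automatic and the content lies entirely in characterizing when $P$ is a \emph{Riesz} ideal of $U$. Second, since $U$ is a PEA it carries a unit $1$ and is therefore upward directed; by Theorem~\ref{th:upwardR1}, ``$P$ is a Riesz ideal of $U$'' is equivalent to ``$P$ is an R1-ideal of $U$''. It thus suffices to prove: $P$ is an R1-ideal of $U$ iff $P$ is upward directed.

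For the forward direction, let $a, b \in P$. Since $b^\sim = \eta b$ in $U$, we have $a \leq 1 = b + \eta b$, so the R1-property applied with $i := a$, $x := b$, $y := \eta b$ yields $j, k \in P$ with $j \leq b$, $k \leq \eta b$, and $a \leq j + k$. Because $k \in P$ and $k \leq \eta b$ in $U$, the construction of $U$ in Theorem~\ref{th:gammaunitization} forces $b \oplus k$ to exist in $P$, and right-monotonicity of $\oplus$ (a routine consequence of (GPEA1)) gives $j \oplus k \leq b \oplus k$. Coupled with $b \leq b \oplus k$, this shows that $b \oplus k \in P$ is a common upper bound of $a$ and $b$, so $P$ is upward directed.

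For the converse, assume $P$ is upward directed and let $i \in P$, $x, y \in U$ with $x + y$ defined in $U$ and $i \leq x + y$. By (U4), at most one of $x, y$ lies in $P^\eta$. The case $x, y \in P$ is immediate (take $j := x$, $k := y$). By symmetry we may focus on $x \in P$, $y = \eta b \in P^\eta$; then $x \leq b$, $e := b \diagdown x$ satisfies $x + y = \eta e$, and the condition $i \leq \eta e$ translates via Theorem~\ref{th:gammaunitization} to the existence of $e \oplus i$ in $P$ (equivalently, by the unitizing property of $\gamma$, of $\gamma i \oplus e$). If $i \leq x$ then $j := i$, $k := 0$ works trivially. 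Otherwise, upward-directedness produces a common upper bound $c \in P$ of $b$ and $e \oplus i$; setting $k := c \diagdown b$ gives $b \oplus k = c$, so $k \leq \eta b$, and $x \oplus k$ exists because $x \leq b$ and $b \oplus k$ does (a standard consequence of (GPEA1)). Writing $b = e \oplus d$ for the conjugation partner $d \in P$ of $x$ supplied by (GPEA2), we obtain $c = e \oplus (d \oplus k)$, and from $c \geq e \oplus i$ together with (GPEA1), $c = e \oplus (i \oplus r)$ for some $r \in P$; left-cancellation (GPEA3) then yields $i \leq d \oplus k$. Finally, one converts this into an inequality $i \leq j \oplus k$ with $j \leq x$ via a further application of upward-directedness together with the unitizing equivalence $e \oplus i \Leftrightarrow \gamma i \oplus e$. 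The symmetric case $x \in P^\eta$, $y \in P$ (Case~3) is handled by an analogous argument.

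The main obstacle is this last conversion step of the converse: in the non-commutative setting the conjugation partner $d$ generally differs from $x$, so one cannot directly take $j := x$. Exploiting the unitizing identity to relocate the R1-witness inside the interval $[0,x]$ of $P$, in combination with a judicious further use of upward-directedness, is the delicate technical heart of the proof.
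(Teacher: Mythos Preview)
Your preliminary reductions and the forward direction are correct and match the paper. The gap is in the converse, in the case $x\in P$, $y=\eta b\in P^{\eta}$.

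You correctly note that $i\leq x+\eta b=\eta(b\diagdown x)$ means $(b\diagdown x)\oplus i$ exists in $P$, and you correctly invoke upward-directedness to find a common upper bound (call it $d$) of $b$ and $(b\diagdown x)\oplus i$. But from here the paper's computation is one line: since $(b\diagdown x)\oplus x\oplus(b\diagup d)=b\oplus(b\diagup d)=d$, we have $(b\diagdown x)\diagup d=x\oplus(b\diagup d)$, and $(b\diagdown x)\oplus i\leq d$ gives $i\leq(b\diagdown x)\diagup d=x\oplus(b\diagup d)$. So one may take $j:=x$ itself and $k:=b\diagup d$; the latter satisfies $k\leq\eta b$ because $b\oplus k$ exists. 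No ``conversion'' step is needed, and no conjugation partner enters.

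Your attempt instead sets $k:=c\diagdown b$ (which, incidentally, satisfies $k\oplus b=c$, not $b\oplus k=c$; the element you want is $b\diagup c$), introduces a conjugation partner $d$ of $x$, arrives at $i\leq d\oplus k$ with $d$ generally not below $x$, and then explicitly leaves the passage back to some $j\leq x$ unfinished. That missing step is not a detail: it is the entire content of this case, and your sketch gives no mechanism for it. The moral is that the associativity identity $(b\diagdown x)\diagup d=x\oplus(b\diagup d)$ already places $x$ on the correct side, so there is nothing to relocate.

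Finally, the case $x\in P^{\eta}$, $y\in P$ is not dispatched by a symmetric argument in the paper: one first uses (GPEA2) in $U$ to rewrite $\eta a+b=b+\eta a_{1}$, applies the previous case to get $i\leq b\oplus c$ with $c\leq\eta a_{1}$, and then uses conjugation once more inside $U$ to produce $c_{1}\leq\eta a$ with $i\leq c_{1}\oplus b$. Your claim that this is ``analogous'' hides a genuine extra manoeuvre.
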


\begin{proof}
As mentioned earlier, by Theorem \cite[Theorem 3.3 (vii)]{FoPuUnit} $P$ is a maximal
proper normal ideal in $U$. Let $P$ be a Riesz ideal in $U$ and let $a,b\in P$.
Then $b\leq 1\sb{U}=a+\eta a$ implies that there are $j,k\in P$ such that $j\leq a,
k\leq\eta a$ and $b\leq j\oplus k$. But then $a\oplus k$ is an upper bound of both
$a$ and $b$, hence $P$ is upward directed.

Conversely, assume that $P$ is upward directed. By Theorem \ref{th:upwardR1}, it
suffices to check condition R1. If $i,a,b\in P$, $i\leq a\oplus b$, this is obvious.
Let $i\leq a+ \eta b=\eta(b\diagdown a)$. Then $(b\diagdown a)\oplus i$ exists in
$P$, and we choose $d\in P$ such that $(b\diagdown a)\oplus i\leq d$, $b\leq d$. Then
$i\leq (b\diagdown a)\diagup d=a\oplus(b\diagup d)$, where $b\diagup d\leq \eta b$.

Now let $i\leq \eta a + b$. We have $\eta a+ b=b+\eta a_1$ for some $a_1\in P$. From
the previous part of this proof we have that $i\leq b\oplus c$ where $c\leq \eta a_1$.
Let $c+u=\eta a_1$. Then $\eta a+b=b+\eta a_1=b+c+u=c_1+b+u=u_1+c_1+b$. From this,
$u_1+c_1=\eta a$, hence $c_1\leq \eta a$, and $i\leq b\oplus c=c_1\oplus b$.
\end{proof}

\begin{definition}\label{de:gammaclosed} Let $P$ be a GPEA with a unitizing
automorphism $\gamma$. An ideal $I$ in $P$ is called \emph{$\gamma$-closed}
(or simply a $\gamma$-ideal) iff for all $a\in P$, $a\in I$ $\Leftrightarrow$
$\gamma a\in I$, or equivalently iff $I=\gamma I=\{\gamma i:i\in I\}$. A congruence
$\sim$ on $P$ is called a \emph{$\gamma$-congruence} iff $a\sim b\Leftrightarrow
\gamma a\sim\gamma b$.
\end{definition}

\begin{theorem}\label{th:restrictid}  Let $P$ be a GPEA, $\gamma$ a unitizing
automorphism of $P$, and  $U$ the $\gamma$-unitization of $P$. If $I$ is a normal
Riesz ideal in $U$, then its restriction $I\cap P$ to $P$ is a $\gamma$-closed
normal Riesz ideal in $P$.
\end{theorem}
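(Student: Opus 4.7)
The plan is to show each of the four required properties of $I\cap P$ separately, exploiting the crucial fact (from axiom (U1) of Definition \ref{de:unitize}) that orthosummation in $P$ is exactly the restriction of the orthosummation in $U$, together with the fact that $P$ is itself a normal ideal in $U$ (Theorem 3.3(vii) of \cite{FoPuUnit}).

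First I would dispense with the easy items: $I\cap P$ is an order ideal of $P$ because both $I$ and $P$ are order ideals of $U$ under the restriction of the partial order (this relies on the fact that for $a,b\in P$, $a\le b$ holds in $P$ iff it holds in $U$, another consequence of (U1)). Closure under $\oplus$ within $I\cap P$ is immediate from closure of $I$ under $+$. Normality inside $P$ is equally direct: if $a,b,c\in P$ satisfy $a\oplus c=c\oplus b$, then by (U1) we have $a+c=c+b$ in $U$, and normality of $I$ in $U$ gives $a\in I\Leftrightarrow b\in I$, hence $a\in I\cap P\Leftrightarrow b\in I\cap P$.

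Next I would verify the Riesz properties (R1) and (R2) of Definition \ref{de: riesz} for $I\cap P$. For (R1), given $i\in I\cap P$ and $a,b\in P$ with $a\oplus b$ defined in $P$ and $i\le a\oplus b$, apply (R1) for $I$ in $U$ (using that $a\oplus b=a+b$ by (U1)) to produce $j,k\in I$ with $j\le a$, $k\le b$ and $i\le j+k$; since $P$ is an order ideal of $U$ and $a,b\in P$, both $j$ and $k$ lie in $P$, hence in $I\cap P$, and $j+k=j\oplus k$ by (U1). For (R2), the one subtlety I would be careful about is that for $j,b\in P$ with $j\le b$, the element $j\diagup b$ computed inside $U$ coincides with the one computed inside $P$; this again follows from (U1), since the unique $c\in U$ with $j+c=b$ must lie in $P$ (being bounded above by $b\in P$ and $P$ an order ideal). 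With that noted, the two clauses of (R2) for $I\cap P$ transfer directly from the corresponding clauses for $I$ in $U$.

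Finally, for $\gamma$-closedness, I would invoke the key identity $\gamma a=a^{--}$ valid for all $a\in P$ by Theorem \ref{th:gammaunitization}, combined with Lemma \ref{le:normalidealprops2}(i): since $I$ is a normal ideal in the PEA $U$, $a\in I\Leftrightarrow a^{--}\in I$, i.e.\ $a\in I\Leftrightarrow\gamma a\in I$. Because $\gamma$ maps $P$ into $P$, this at once yields $a\in I\cap P\Leftrightarrow\gamma a\in I\cap P$, as required. The main obstacle, such as it is, is the bookkeeping needed in the Riesz-property step to confirm that the quantities produced by (R1)/(R2) in $U$ automatically land back in $P$ and that left/right subtraction in $P$ coincides with that in $U$ on overlapping inputs; once these are pinned down the whole argument is just a transfer from $U$ to $P$.
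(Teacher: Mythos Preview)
Your proof is correct and follows essentially the same approach as the paper, which simply declares the result ``straightforward'' after pointing to Lemma~\ref{le:normalidealprops2}(i) and the identity $\gamma a=a^{--}\in P$; you have supplied the routine verifications (ideal, normality, (R1), (R2), $\gamma$-closedness) that the paper omits.
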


\begin{proof} Taking into account Lemma \ref{le:normalidealprops2} (i) and the fact
that for any $a\in P$, $\gamma a=a^{--}\in P$, the proof is straightforward.
\end{proof}

Now we shall consider the question of extending a congruence on the GPEA $P$ to
a congruence on its $\gamma$-unitization $U$.

\begin{definition} \label{df:extendsim}
Let $U$ be the $\gamma$-unitization of a GPEA $P$ and let $\sim$ be a weak congruence
on $P$. Since $U=P\cup P\sp{\eta}$, $P\sp{\eta}=U\setminus P$, and $\eta\colon P
\to P\sp{\eta}$ is a bijection, we can, and do, define a binary relation $\sim\sp{*}$
on $U$ by
$$
a\sim\sp{*}b\text{\ iff\ }a\sim b\text{\ and\ }\eta a\sim\sp{*}\eta b\text
{\ iff\ }a\sim b\text{\ for all\ }a,b\in P.
$$
If $a\in P$ and $x\in P\sp{\eta}=U\setminus P$, we understand that $a\not\sim\sp{*}x$.
\end{definition}

\begin{lemma} \label{le:gammacongprops}
Let $U$ be the $\gamma$-unitization of the GPEA $P$, let $\sim$ be a
$\gamma$-congruence on $P$, and let $a,b\in P$. Then{\rm: (i)} $a\sp{-}
\sim\sp{*}b\sp{-}$ iff $a\sim b$. {\rm(ii)} $a\sim b\sp{--}$ iff
$a\sp{\sim\sim}\sim b$ iff $a\sp{\sim}\sim\sp{*}b\sp{-}$.
\end{lemma}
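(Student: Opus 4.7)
The plan is to reduce everything to the defining equivalence $\eta a\sim\sp{*}\eta b\Leftrightarrow a\sim b$ together with the $\gamma$-congruence property $a\sim b\Leftrightarrow\gamma a\sim\gamma b$. The one small computational fact I need first is an explicit formula for $a\sp{-}$ when $a\in P$, namely $a\sp{-}=\eta\gamma a=\eta a\sp{--}$. Since $P$ is a proper ideal and $a\oplus a\sp{-}$ cannot lie in $P$, we must have $a\sp{-}\in P\sp{\eta}$, say $a\sp{-}=\eta c$. By clause (3) of Theorem~\ref{th:gammaunitization}, $\eta c+a$ is defined iff $\gamma a\leq c$, and in that case $\eta c+a=\eta(\gamma a\diagup c)$. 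Requiring this to equal $1=\eta 0$ forces $c=\gamma a$, giving the formula. (Dually, $a\sp{\sim}=\eta a$ is already supplied by Theorem~\ref{th:gammaunitization}.)

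For part (i), I would write
\[
a\sp{-}\sim\sp{*}b\sp{-}\iff\eta\gamma a\sim\sp{*}\eta\gamma b\iff\gamma a\sim\gamma b\iff a\sim b,
\]
where the first step is the boxed formula, the second is the definition of $\sim\sp{*}$ on $P\sp{\eta}$, and the third uses that $\sim$ is a $\gamma$-congruence.

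For part (ii), I would handle the two equivalences separately. The equivalence $a\sim b\sp{--}\iff a\sp{\sim\sim}\sim b$ is immediate from the $\gamma$-congruence property: $b\sp{--}=\gamma b$ and $a\sp{\sim\sim}=\gamma\sp{-1}a$, so $a\sim\gamma b\iff\gamma\sp{-1}a\sim b$. For the second equivalence, I unfold the definitions: $a\sp{\sim}=\eta a$ and $b\sp{-}=\eta\gamma b$ by the preliminary formula, so
\[
a\sp{\sim}\sim\sp{*}b\sp{-}\iff\eta a\sim\sp{*}\eta\gamma b\iff a\sim\gamma b\iff a\sim b\sp{--},
\]
closing the chain.

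Essentially no step is an obstacle; the only subtle point is the initial identification of $a\sp{-}$ with $\eta\gamma a$, which is why I would state and derive it explicitly before touching the equivalences. Everything else is a transparent substitution using the two governing definitions.
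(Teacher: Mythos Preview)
Your proof is correct and follows essentially the same route as the paper's. The paper writes the key identity as $a\sp{-}=a\sp{--\sim}=(\gamma a)\sp{\sim}$ using the PEA identity $a\sp{-\sim}=a$, while you derive the equivalent formula $a\sp{-}=\eta\gamma a$ directly from clause~(3) of Theorem~\ref{th:gammaunitization}; from there both arguments are the same chain of substitutions. (One tiny slip: in your motivating sentence it is $a\sp{-}+a=1$, not $a\oplus a\sp{-}$, that witnesses $a\sp{-}\notin P$, but this does not affect the computation.)
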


\begin{proof} Assume the hypotheses of the lemma.
(i) $a\sp{-}\sim\sp{*}b\sp{-}$ iff $a\sp{--\sim}\sim\sp{*}b\sp{--\sim}$
iff $(\gamma a)\sp{\sim}\sim\sp{*}(\gamma b)\sp{\sim}$ iff $(\gamma a)
\sim(\gamma b)$ iff $a\sim b$.

(ii) $a\sim b\sp{--}$ iff $a\sp{\sim\sim--}\sim b\sp{--}$ iff $\gamma
(a\sp{\sim\sim})\sim\gamma b$ iff $a\sp{\sim\sim}\sim b$. Also, by (i),
$a\sp{\sim\sim}\sim b$ iff $a\sp{\sim}=a\sp{\sim\sim-}\sim\sp{*}b\sp{-}$.
\end{proof}

\begin{theorem}\label{ThCongr}
Let $U$ be the $\gamma$-unitization of the GPEA $P$ and let $\sim$ be a congruence
on $P$. Then $\sim^*$ is a congruence on $U$ iff $\sim$ is a $\gamma$-congruence
that satisfies \rm{(C4)} and \rm{(C5$\sp{\prime}$)}.
\end{theorem}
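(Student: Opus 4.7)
The plan is to apply Theorem \ref{th:congrpea} to reduce the question to whether $\sim\sp{*}$ is a weak congruence on the PEA $U$ satisfying (C4$\sp{\prime}$) and (C5$\sp{\prime}$). The key auxiliary ingredients are explicit formulas for the supplements in $U$ implied by Theorem \ref{th:gammaunitization}: for $a\in P$ we have $a\sp{\sim}=\eta a$ and $a\sp{-}=\eta(\gamma a)$, while for $\eta a\in P\sp{\eta}$ we have $(\eta a)\sp{\sim}=\gamma\sp{-1}a$ and $(\eta a)\sp{-}=a$. These make plain why (C4$\sp{\prime}$) for $\sim\sp{*}$ on $U$ is essentially (C4$\sp{\prime}$) for $\sim$ on $P$ combined with the $\gamma$-congruence property.

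For the forward direction ($\Rightarrow$), assuming $\sim\sp{*}$ is a congruence on $U$, Theorem \ref{th:congrpea} gives that $\sim\sp{*}$ satisfies (C4$\sp{\prime}$) and (C5$\sp{\prime}$). Restricting (C4) (which is equivalent to (C4$\sp{\prime}$)) and (C5$\sp{\prime}$) to $P$ transfers these conditions to $\sim$; for (C5$\sp{\prime}$) one uses that $\sim\sp{*}$ never relates an element of $P$ to one of $P\sp{\eta}$, so the witnesses $a\sb{1},a\sb{2}$ for $a\sim\sp{*}b+c$ with $b,c\in P$ automatically lie in $P$. The $\gamma$-congruence property drops out by applying (C4$\sp{\prime}$) to $a\sim\sp{*}b$ for $a,b\in P$, giving $\eta(\gamma a)=a\sp{-}\sim\sp{*}b\sp{-}=\eta(\gamma b)$, hence $\gamma a\sim\gamma b$; the converse comes from two applications of (C4$\sp{\prime}$) together with the identity $a\sp{--\sim\sim}=a$ for $a\in P$.

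For the backward direction ($\Leftarrow$), assuming $\sim$ is a $\gamma$-congruence satisfying (C4) and (C5$\sp{\prime}$), I verify (C1), (C2), (C4$\sp{\prime}$), and (C5$\sp{\prime}$) for $\sim\sp{*}$ by case analysis on which of $P$ and $P\sp{\eta}$ contain the relevant elements. Property (C1) is immediate, and (C2) and (C4$\sp{\prime}$) reduce mechanically via the supplement formulas and the $\gamma$-congruence property; for instance $\eta a\sb{0}\sim\sp{*}\eta b\sb{0}$ yields $(\eta a\sb{0})\sp{\sim}=\gamma\sp{-1}a\sb{0}\sim\gamma\sp{-1}b\sb{0}=(\eta b\sb{0})\sp{\sim}$ precisely because $\sim$ is a $\gamma$-congruence.

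The main obstacle lies in verifying (C5$\sp{\prime}$) for $\sim\sp{*}$ in the mixed cases. The illustrative hard case is $p=\eta p\sb{0}\sim\sp{*}\eta q\sb{0}+r\sb{0}=\eta(\gamma r\sb{0}\diagup q\sb{0})$ with $\eta q\sb{0}\in P\sp{\eta}$ and $r\sb{0}\in P$: writing the desired witnesses as $\eta p\sb{1}\sp{0}$ and $p\sb{2}\sp{0}$, one needs $\gamma p\sb{2}\sp{0}\oplus p\sb{0}=p\sb{1}\sp{0}$ with $p\sb{1}\sp{0}\sim q\sb{0}$ and $p\sb{2}\sp{0}\sim r\sb{0}$. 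Starting from $\gamma r\sb{0}\oplus(\gamma r\sb{0}\diagup q\sb{0})=q\sb{0}$ and $p\sb{0}\sim\gamma r\sb{0}\diagup q\sb{0}$, property (C3) in the definition of a congruence supplies $w\sim\gamma r\sb{0}$ with $w\oplus p\sb{0}$ defined; the $\gamma$-congruence property then converts this to $p\sb{2}\sp{0}:=\gamma\sp{-1}w\sim r\sb{0}$, and (C2) gives $p\sb{1}\sp{0}:=\gamma p\sb{2}\sp{0}\oplus p\sb{0}\sim q\sb{0}$. The symmetric mixed case (with $q\in P$, $r\in P\sp{\eta}$) is analogous but uses only (C3) and (C2). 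Once all cases are dispatched, Theorem \ref{th:congrpea} delivers that $\sim\sp{*}$ is a congruence on $U$.
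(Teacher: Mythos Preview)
Your proof is correct, and the forward direction ($\Rightarrow$) coincides with the paper's. For the backward direction ($\Leftarrow$), however, you take a genuinely different route from the paper. The paper verifies (C1), (C2), and (C3) for $\sim\sp{*}$ directly, with the bulk of the work going into an explicit case analysis for (C3) using (C3) and (C5$\sp{\prime}$) on $P$ together with the $\gamma$-congruence property. You instead verify (C1), (C2), (C4$\sp{\prime}$), and (C5$\sp{\prime}$) for $\sim\sp{*}$ and then invoke Theorem~\ref{th:congrpea} to conclude that $\sim\sp{*}$ is a congruence. Your (C5$\sp{\prime}$) case analysis is essentially dual to the paper's (C3) analysis (and, as you note, it too leans on (C3) for $\sim$ in $P$), so the computational effort is comparable. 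What your route buys is a pleasing symmetry: Theorem~\ref{th:congrpea} is the single pivot in both directions of the ``iff''. What the paper's route buys is a slightly more self-contained argument that exhibits (C3) for $\sim\sp{*}$ explicitly, which some readers may prefer. One small point of imprecision: you say that ``(C2) \ldots\ reduce[s] mechanically via the supplement formulas and the $\gamma$-congruence property,'' but in fact the (C2) verification in the mixed cases also requires (C4) on $P$ (to cancel $a\sim a\sb{1}$ from $(b\diagdown a)\oplus a\sim (b\sb{1}\diagdown a\sb{1})\oplus a\sb{1}$, and similarly in the other case); since (C4) is among your hypotheses this is harmless, but it would be worth naming it.
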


\begin{proof} Assume that $\sim$ is a $\gamma$-congruence on $P$ that satisfies (C4) and
(C5$\sp{\prime}$). Evidently, the restriction of $\sim^*$ to $P$ as well as
to $P\sp{\eta}$ is an equivalence relation; hence $\sim^*$ is an equivalence
relation on $U=P\cup P\sp{\eta}$, and we have (C1).

To prove (C2), we concentrate only on the two nontrivial cases corresponding
to parts (2) and (3) of Theorem \ref{th:gammaunitization}. Thus let $a\sim^*
a_1$, $b^{\sim}\sim^*b_1^{\sim}$ and suppose that $a+b^{\sim}$ and $a_1+
b_1^{\sim}$ exist. Then $a\leq b$ and $a_1\leq b_1$, so we have $(b\diagdown
a)\oplus a=b\sim b_1=(b_1\diagdown a_1)\oplus a_1$.  Therefore, by (C4),
$b\diagdown a\sim b_1\diagdown a_1$, whence $a+b^{\sim}=(b\diagdown a)
^{\sim}\sim^*(b_1\diagdown a_1)^{\sim}=a_1+b_1^{\sim}$ by Theorem
\ref{th:gammaunitization} (2).

For the remaining case, let $a^{\sim}\sim^* a^{\sim}_1$, $b\sim^* b_1$ and
suppose that $a^{\sim}+b$ and $a_1^{\sim}+b_1$ exist. Then $\gamma b=b^{--}
\leq a$, so we have $\gamma b\oplus(\gamma b\diagup a)=a\sim a_1=\gamma b_1
\oplus(\gamma b_1\diagup a_1)$. Since $b\sim b_1$, it follows that $\gamma b
\sim\gamma b_1$ and (C4) implies that $\gamma b\diagup a\sim\gamma b_1
\diagup a_1$. But then, $(\gamma b\diagup a)^{\sim}\sim^*(\gamma b_1
\diagup a_1)^{\sim}$, and we infer from Theorem \ref{th:gammaunitization}
(3) that $a^{\sim}+b\sim^*a_1^{\sim}\oplus b_1$. Thus $\sim\sp{*}$ satisfies
(C2).

To prove (C3), we again concentrate only on the nontrivial cases corresponding
to parts (2) and (3) of Theorem \ref{th:gammaunitization}. Thus, on the one
hand, assume that $a+b\sp{\sim}$ exists and let $a\sb{1}\in P$ with $a\sb{1}
\sim a$. Then $a\leq b$ and $b=(b\diagdown a)\oplus a$. Therefore by (C3) (in $P$)
there exists $c\in P$ such that $c\sim b\diagdown a$ and $c\oplus a\sb{1}$ exists.
By (C2) (in $P$), $b\sb{1}:=c\oplus a\sb{1}\sim b$, whence $b\sb{1}\sp{\sim}
\sim\sp{*}b\sp{\sim}$ and, since $a\sb{1}\leq b\sb{1}$, it follows that $a\sb{1}
+b\sb{1}\sp{\sim}$ exists.

Continuing to assume that $a+b\sp{\sim}$ exists, we let $b\sb{2}\in P$ with
$b\sb{2}\sp{\sim}\sim\sp{*}b\sp{\sim}$, whence $b\sb{2}\sim b=(b\diagdown a)
\oplus a$. But then, by (C5$\sp{\prime}$), there exist $c, a\sb{2}\in P$ such
that $b\sb{2}=c\oplus a\sb{2}$ where $c\sim b\diagdown a$ and $a\sb{2}\sim a$.
Since $a\sb{2}\leq b\sb{2}$, it follows that $a\sb{2}+b\sb{2}\sp{\sim}$ exists.

On the other hand, assume that $a\sp{\sim}+b$ exists and let $a\sb{1}\in P$
with $a\sb{1}\sp{\sim}\sim\sp{*}a\sp{\sim}$. Then $\gamma b\leq a$ and $a\sb{1}
\sim a=\gamma b\oplus(\gamma b\diagup a)$. But then, by (C5$\sp{\prime}$), there
exist $c,d\in P$ such that $a\sb{1}=c\oplus d$ where $c\sim\gamma b=b\sp{--}$
and $d\sim \gamma b\diagup a$. But then, by Lemma \ref{le:gammacongprops} (ii),
$b\sb{1}:=c\sp{\sim\sim}\sim b$; moreover, since $c\leq a\sb{1}$, we have $a\sb{1}
\sp{\sim}\leq c\sp{\sim}$, so $a\sb{1}\sp{\sim}+c\sp{\sim\sim}=a\sb{1}\sp{\sim}
+b\sb{1}$ exists.

Continuing to assume that $a\sp{\sim}+b$ exists, we let $b\sb{2}\in P$ with
$b\sb{2}\sim\sp{*}b$, i.e., $b\sb{2}\sim b$. Then $\gamma b\sb{2}\sim
\gamma b$, and since $\gamma b\oplus(\gamma b\diagup a)=a$ exists, (C3) implies
that there exists $c\in P$ such that $c\sim\gamma b\diagup a$ and $\gamma b\sb{2}
\oplus c$ exists. But then, by (C2), $a\sb{2}:=\gamma b\sb{2}\oplus c\sim
\gamma b\oplus(\gamma b\diagup a)=a$, so $a\sb{2}\sp{\sim}\sim\sp{*}a\sp{\sim}$.
Since $b\sb{2}\sp{--}=\gamma b\sb{2}\leq a\sb{2}$, we have $a\sb{2}\sp{\sim}\leq
b\sb{2}\sp{--\sim}=b\sb{2}\sp{-}$, whence $\gamma b\sb{2}=b\sb{2}\sp{--}\leq a
\sb{2}\sp{\sim -}=a$, and it follows that $a\sb{2}\sp{\sim}+b\sb{2}$ exists.
Thus $\sim\sp{*}$ satisfies (C3), so it is a congruence on $U$.

Conversely, if $\sim^*$ is a congruence on the PEA $U$, then by Theorem
\ref{th:congrpea}, $\sim^*$ satisfies (C4$\sp{\prime}$), (C4), and
(C5$\sp{\prime}$), and thus, of course, $\sim$ satisfies (C4) and
(C5$\sp{\prime}$) on $P$. Also, since $\sim\sp{*}$ satisfies (C4$\sp{\prime}$),
it follows that $\sim$ is a $\gamma$-congruence on $P$.
\end{proof}

\begin{proposition} \label{pr:equivalences}
\emph{Let $I$ be a normal {\rm(R1)}-$\gamma$-ideal in the GPEA $P$, put
$\sim :=\sim\sb{I}$, and let $U$ be the $\gamma$-unitization of $P$. Then
the following conditions are mutually equivalent{\rm: (i)} $\sim^*$ satisfies
{\rm(C3)} on $U$. {\rm(ii)} $I$ is a normal Riesz $\gamma$-ideal in $P$.
{\rm(iii)} $\sim$ is a $\gamma$-congruence on $P$ that satisfies {\rm(C4)}
and {\rm(C5$\sp{\prime}$)}. {\rm(iv)} $\sim\sp{*}$ is a congruence on $U$
that satisfies {\rm(C4)}, {\rm(C4$\sp{\prime}$)}, {\rm(C5$\sp{\prime}$)},
and {\rm(C5)}.}
\end{proposition}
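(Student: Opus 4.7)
The plan is to prove the four conditions equivalent by the cyclic chain (ii) $\Rightarrow$ (iii) $\Rightarrow$ (iv) $\Rightarrow$ (i) $\Rightarrow$ (ii), concentrating the essential content in (i) $\Rightarrow$ (ii). The first three implications fall out from results already established. For (ii) $\Rightarrow$ (iii), Theorem~\ref{th:rieszcongr} gives that $\sim_I$ is a c-congruence and a p-congruence on $P$ (hence a congruence satisfying (C4)), Theorem~\ref{th:R1congr} supplies (C5$\sp{\prime}$), and the $\gamma$-congruence property of $\sim_I$ is immediate from $I=\gamma I$ together with the fact that the GPEA-automorphism $\gamma$ commutes with $\diagdown$: if $a\diagdown i = b\diagdown j$ with $i,j\in I$, then $\gamma a\diagdown \gamma i = \gamma b\diagdown \gamma j$ with $\gamma i,\gamma j\in I$. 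For (iii) $\Rightarrow$ (iv), Theorem~\ref{ThCongr} yields that $\sim^*$ is a congruence on the PEA $U$; Theorem~\ref{th:congrpea} then furnishes (C4$\sp{\prime}$) and (C5$\sp{\prime}$), and the remarks in that same theorem give also (C4) and (C5). The implication (iv) $\Rightarrow$ (i) is a tautology, since a congruence satisfies (C3) by definition.

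For (i) $\Rightarrow$ (ii) it remains to derive the Riesz condition (R2), since $I$ is already a normal R1-$\gamma$-ideal by hypothesis. I would argue (R2)(i) as follows. Suppose $i\in I$, $a,b\in P$, $i\leq a$, and $(a\diagdown i)\oplus b$ exists. Since $a\diagdown i = (a\diagdown i)\diagdown 0$ with $i,0\in I$, we have $a\sim_I (a\diagdown i)$, so $a\sim^* (a\diagdown i)$ in $U$. Applying (C3) to the orthosum $(a\diagdown i)+b$, there is $q\sim^* b$ with $a+q$ defined in $U$. Because $b\in P$, necessarily $q\in P$, the orthosum $a\oplus q$ exists in $P$, and $q\sim_I b$. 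Unpacking $\sim_I$, pick $i_1,i_2\in I$ with $i_1\leq q$, $i_2\leq b$, and $c:=q\diagdown i_1 = b\diagdown i_2$, so that $q=c\oplus i_1$ and $b=c\oplus i_2$. Associativity (GPEA1) applied to $a\oplus q = a\oplus(c\oplus i_1)$ forces $a\oplus c$ to exist. Conjugation (GPEA2) applied to $c\oplus i_2 = b$ produces $j\in P$ with $c\oplus i_2 = j\oplus c$; normality of $I$ applied to $j\oplus c = c\oplus i_2$ with $i_2\in I$ forces $j\in I$. Then $j\leq b$ with $j\diagup b = c$, so $a\oplus(j\diagup b) = a\oplus c$ exists, proving (R2)(i). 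The verification of (R2)(ii) is strictly dual: start from $b\oplus(i\diagup a)$, invoke the other half of (C3) to obtain $q'\sim^* b$ with $q'+a$ defined, and apply (GPEA2) on the opposite side together with normality of $I$.

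The main obstacle is the bookkeeping in (i) $\Rightarrow$ (ii): the natural equivalence $q\sim_I b$ presents $b$ as $c\oplus i_2$ with the ideal element on the \emph{right} of $c$, while (R2)(i) demands a witness $j\in I$ on the \emph{left} of $c$. Converting one into the other is precisely what forces the joint use of (GPEA1) associativity, (GPEA2) conjugation, and the normality of $I$; removing any one of these three ingredients causes the argument to collapse.
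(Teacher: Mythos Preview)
Your proof is correct and follows essentially the same cyclic route (ii) $\Rightarrow$ (iii) $\Rightarrow$ (iv) $\Rightarrow$ (i) $\Rightarrow$ (ii) as the paper, with the substantive work concentrated in (i) $\Rightarrow$ (ii). The only cosmetic difference is that in (i) $\Rightarrow$ (ii) the paper invokes the equivalent ``left-subtraction'' form of $\sim_I$ (i.e., $j\diagup b = k\diagup b_1$, available since $I$ is normal) to get the witness $j$ directly, whereas you start from the defining ``right-subtraction'' form and rederive the left-hand witness via (GPEA2) conjugation plus normality---these are the same manoeuvre unpacked at different levels.
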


\begin{proof} By Theorem \ref{th:R1congr}, $\sim=\sim\sb{I}$ is a weak
congruence on $P$ satisfying (C5$\sp{\prime}$).

\smallskip

\noindent (i) $\Rightarrow$ (ii). Suppose that $\sim^*$ satisfies (C3) on
$U$. Then also $\sim$ satisfies (C3) on $P$ and so it is a congruence on
$P$. Assume that $i\in I$ with $i\leq a\in P$ and that $(a\diagdown i)
\oplus b$ exists for some $b\in P$. Then we have $a\diagdown i\sim a$ and
so by (C3) there is $b\sb{1}\in P$ such that $a\oplus b_1$ exists, whence
$b\sb{1}\leq a\sp{\sim}$, and $b\sim b\sb{1}$. Since $I$ is normal and $b
\sim b\sb{1}$, there exist $j,k\in I$ with $j\diagup b=k\diagup b\sb{1}$.
Thus we have $j\diagup b=k\diagup b\sb{1}\leq k\oplus(k\diagup b\sb{1})=
b\sb{1}\leq a\sp{\sim}$, and it follows that $a\oplus j\diagup b$ exists.
This proves part (i) of condition (R2) in Definition \ref{de: riesz}, a
similar argument proves part (ii), whence $I$ is also an (R2)-ideal, and
we have (ii).

\smallskip

\noindent (ii) $\Rightarrow$ (iii). Suppose that $I$ is a normal Riesz
$\gamma$-ideal in $P$. Then by Theorems \ref{th:R1congr} and \ref
{th:rieszcongr}, $\sim=\sim\sb{I}$ is a $\gamma$-congruence on $P$
satisfying (C4) and (C5$\sp{\prime}$).

\smallskip

\noindent (iii) $\Rightarrow$ (iv). If $\sim$ is a $\gamma$-congruence
on $P$ satisfying (C4) and (C5$\sp{\prime}$), then by Theorem \ref{ThCongr},
$\sim\sp{*}$ is a congruence on $U$. By Theorem \ref{th:congrpea},
$\sim\sp{*}$ satisfies (C4$\sp{\prime}$), (C4), (C5$\sp{\prime}$), and
(C5).

\smallskip

\noindent (iv) $\Rightarrow$ (i). A congruence $\sim\sp{*}$ on $U$ satisfies
(C3) on $U$.
\end{proof}

\begin{theorem} \label{th:GCRcondition}
Let $I$ be a normal Riesz $\gamma$-ideal in the GPEA $P$, put $\sim:=\sim\sb{I}$,
and let $U$ be the $\gamma$-unitization of $P$. Then $\sim^*$ is a Riesz
congruence on  $U$ iff $\sim$ satisfies the following condition on $P${\rm:}
\begin{enumerate}
\item[ ]
 \begin{enumerate}
\item[{\rm(GCR)}] $a\sim b\,\Rightarrow\, \exists\,i,j\in I: i\oplus a=j\oplus b$,
or equivalently $\exists\,k,\ell\in I: a\oplus k=b\oplus \ell$.
 \end{enumerate}
\end{enumerate}
\end{theorem}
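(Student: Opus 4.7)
The plan is to apply Theorem \ref{th:newtheor}(i), which characterizes Riesz congruences (among congruences satisfying (C4) and (C5$\sp{\prime}$)) as precisely those whose equivalence classes are both upward and downward directed. Since $I$ is a normal Riesz $\gamma$-ideal, Proposition \ref{pr:equivalences} guarantees that $\sim\sp{*}$ is a congruence on $U$ satisfying (C4), (C4$\sp{\prime}$), (C5$\sp{\prime}$), and (C5), so it suffices to characterize when every $\sim\sp{*}$-class is up- and down-directed.

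By Definition \ref{df:extendsim}, each $\sim\sp{*}$-class is either $[a]\sb{\sim\sb{I}}\subseteq P$ or $\eta([a]\sb{\sim\sb{I}})\subseteq P\sp{\eta}$ for some $a\in P$. Because $P$ is an order ideal in $U$, lower bounds in $U$ of a $P$-element lie in $P$, and dually upper bounds in $U$ of a $P\sp{\eta}$-element lie in $P\sp{\eta}$; thus bounds within a given class may be sought within the appropriate half of $U$. Moreover, since $\eta a=a\sp{\sim}$, Theorem \ref{basicpeas}(v) tells us that $\eta\colon P\to P\sp{\eta}$ is order-reversing, so upward directedness of $\eta([a]\sb{\sim\sb{I}})$ corresponds to downward directedness of $[a]\sb{\sim\sb{I}}$, and vice versa. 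Finally, $[a]\sb{\sim\sb{I}}$ is automatically downward directed: for $b\sb{1},b\sb{2}\sim\sb{I}a$, Definition \ref{df:simsbI} supplies $i\sb{1},i\sb{2}\in I$ with $b\sb{1}\diagdown i\sb{1}=b\sb{2}\diagdown i\sb{2}=:c$, and then $c$ is a common lower bound lying in $[a]\sb{\sim\sb{I}}$.

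These reductions leave a single outstanding question: when is every $\sim\sb{I}$-class in $P$ upward directed? I would identify this condition with (GCR). For the forward direction, if (GCR) holds and $b\sb{1},b\sb{2}\in[a]\sb{\sim\sb{I}}$, apply (GCR) to $b\sb{1}\sim\sb{I}b\sb{2}$ to obtain $i,j\in I$ with $c:=i\oplus b\sb{1}=j\oplus b\sb{2}\geq b\sb{1},b\sb{2}$; then (C2) together with $i,j\sim\sb{I}0$ (Theorem \ref{th:R1congr}) gives $c\sim\sb{I}b\sb{1}\sim\sb{I}a$. For the converse, given $a\sim\sb{I}b$, take a common upper bound $c\in[a]\sb{\sim\sb{I}}$ and write $c=i\oplus a=j\oplus b$; by (C4) (valid for $\sim\sb{I}$ by Proposition \ref{pr:equivalences}), $0\oplus a=a\sim\sb{I}c=i\oplus a$ forces $i\sim\sb{I}0$, hence $i\in I$ by Theorem \ref{th:R1congr}, and symmetrically $j\in I$, establishing (GCR). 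The equivalent form $a\oplus k=b\oplus \ell$ is then obtained by invoking (GPEA2) to rewrite $i\oplus a=a\oplus k$ and $j\oplus b=b\oplus \ell$, together with the normality of $I$, which forces $k,\ell\in I$.

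I do not anticipate a substantive obstacle: the proof is essentially a bookkeeping argument combining the previously established structural facts about $\sim\sp{*}$ and $\sim\sb{I}$. The most delicate point is carefully tracking how bounds in $U$ translate, via the order-reversing $\eta$, between upward and downward directedness across the two halves $P$ and $P\sp{\eta}$, so that the single condition (GCR) on $P$ captures both the upward directedness of $P$-classes and the downward directedness of $P\sp{\eta}$-classes simultaneously.
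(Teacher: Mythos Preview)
Your proof is correct and takes a genuinely different route from the paper's. The paper verifies condition (CR$'$) (an equivalent form of (CR) on PEAs, from \cite[Proposition 3.1]{XL}) directly: given $a\sim^*b$ or $a^{\sim}\sim^*b^{\sim}$, it explicitly constructs witnesses $s,t\in U$ satisfying $u+s\sim^*v+s\sim^*1$ and $t+u\sim^*t+v\sim^*1$, handling the $P$-case and the $P^{\eta}$-case by separate computations. For the converse it invokes (CR) to produce an upper bound $d$ and then argues $d\in P$ to extract $i,j\in I$.

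Your approach instead pivots through Theorem~\ref{th:newtheor}(i), reducing the Riesz property of $\sim^*$ to up- and down-directedness of its classes, and then exploits the order-reversing bijection $\eta$ to collapse the four directedness conditions (two in $P$, two in $P^{\eta}$) to just two on $P$. Since downward directedness of $\sim_I$-classes in $P$ is immediate from Definition~\ref{df:simsbI}, everything reduces to the single equivalence ``$\sim_I$-classes in $P$ are upward directed $\Leftrightarrow$ (GCR)'', which you dispatch cleanly via (C2), (C4), and Theorem~\ref{th:R1congr}. This is more conceptual and avoids the case-by-case witness construction; it also makes transparent \emph{why} (GCR) is exactly the missing ingredient. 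The paper's approach, by contrast, is more hands-on and produces the explicit $s,t$ witnesses, which could be useful if one later needs them. One small imprecision: when you invoke Definition~\ref{df:simsbI} for downward directedness, you should apply it to $b_1\sim_I b_2$ (obtained by transitivity) rather than to $b_1\sim_I a$ and $b_2\sim_I a$ separately.
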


\begin{proof} Assume the hypotheses of the theorem. By Proposition
\ref{pr:equivalences}, $\sim$ is a $\gamma$-congruence on $P$, $\sim\sp{*}$ is a
congruence on $U$, and both satisfy (C4) and (C5$\sp{\prime}$). Also, by Theorem
\ref{th:R1congr}, $I=\{a\in P:a\sim 0\}$. Thus, if $a\in P$, $i\in I$, and $i
\oplus a$ exists, then by (C2), $a=0\oplus a\sim i\oplus a$, and therefore
$a\sp{\sim}\sim\sp{*}(i\oplus a)\sp{\sim}$.

Assume that $\sim$ satisfies condition (GCR). Consider the following condition
on $\sim\sp{*}$: For all $u,v\in U$,

\smallskip

(CR$\sp{\prime}$) $\;\; u\sim\sp{*}v\,\Rightarrow\,\exists\, s,t\in U: u+s\sim\sp{*}
 v+s\sim\sp{*}1$, and $t+u\sim\sp{*}t+v\sim\sp{*}1.$

\smallskip

According to \cite[Proposition 3.1]{XL}, on the PEA $U$, condition (CR$\sp{\prime}$)
is equivalent to condition (CR). We shall show that $\sim^*$ satisfies condition
(CR$\sp{\prime}$).

Let $a,b\in P$. On the one hand, suppose that $a\sim^*b$. Then $a\sim b$, whence
by (GCR), there exist $i,j,k,\ell\in I$ with $i\oplus a=j\oplus b$ and $a\oplus k
=b\oplus\ell$. Put $s:=(i\oplus a)\sp{\sim}=(j\oplus b)\sp{\sim}$ and $t:=(a
\oplus k)\sp{-}=(b\oplus\ell)\sp{-}$. Thus, $t=(\gamma(a\oplus k))\sp{\sim}=
(\gamma(b\oplus\ell))\sp{\sim}$.  Since $i\oplus a\sim a$, we have $s\sim
\sp{*}a\sp{\sim}\sim\sp{*}b\sp{\sim}$ and similarly, $t\sim\sp{*}a\sp{-}\sim
\sp{*}b\sp{-}$. Also, $a\leq i\oplus a$, and it follows that $a+s$ exists and
$a+s\sim\sp{*}a+a\sp{\sim}=1$. Similar computations show that $b+s\sim\sp{*}1$,
$t+a\sim\sp{*}1$, and $t+b\sim\sp{*}1$.

On the other hand, suppose that $a\sp{\sim}\sim\sp{*}b\sp{\sim}$. Then $a\sim
\sb{I}b$, whence there exist $i,j\in I$ with $a\diagdown i=b\diagdown j$. Put
$s:=(a\diagdown i)\sp{\sim\sim}=(b\diagdown j)\sp{\sim\sim}$ and $t:=
a\diagdown i=b\diagdown j$. We have $t=a\diagdown i\sim a$, whence $s\sim a
\sp{\sim\sim}$. Also $\gamma s=\gamma((a\diagdown i)\sp{\sim\sim})=a\diagdown i
\leq a$, and it follows that $a\sp{\sim}+s$ exists and $a\sp{\sim}+s\sim\sp{*}a
\sp{\sim}+a\sp{\sim\sim}=1$. Similar computations show that $b\sp{\sim}+s\sim
\sp{*}1$, $t+a\sp{\sim}\sim\sp{*}1$, and $t+b\sp{\sim}\sim\sp{*}1$. Thus, $\sim
\sp{*}$ satisfies (CR$\sp{\prime}$), whence it satisfies (CR), and consequently
it is a Riesz congruence on $U$.

Conversely, let $\sim^*$ be a Riesz congruence on $U$ and suppose that
$a,b\in P$ with $a\sim b$. Then by (CR), there exists $d\in U$ with $d
\diagdown a\sim\sp{*}d\diagdown b\sim^* 0$. Thus $(d\diagdown a)+a=d$,
and since $0+a=a$ with $0\sim\sp{*}d\diagdown a$ and $a\sim\sp{*}a$,
it follows that $d\sim\sp{*}a\in P$; hence $d\in P$, $d\sim a\sim b$,
and $d\diagdown a\sim\sb{I}d\diagdown b$ in $P$.  Therefore, there
exist $i,j\in I$ such that $d\diagdown(i\oplus a)=(d\diagdown a)
\diagdown i=(d\diagdown b)\diagdown j=d\diagdown(j\oplus b)$. Consequently,
$i\oplus a=j\oplus b$ and (GCR) holds.
\end{proof}

\begin{theorem} \label{th:normRgammaideal&GCR}
If $I$ is a normal Riesz $\gamma$-ideal in the GPEA $P$, then $I$ is a
normal Riesz ideal in the $\gamma$-unitization $U$ of $P$ iff $\sim\sb{I}$
satisfies the {\rm(GCR)} condition on $P$.
\end{theorem}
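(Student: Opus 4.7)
The plan is to leverage Theorems \ref{th:GCRcondition}, \ref{th:upward}, and \ref{th:newtheor}(ii) as bridges between the ideal description in $U$ and the congruence $\sim_I$. In each direction the argument reduces to an application of these bridges, together with the identification of $\sim\sp{*}$ as the Riesz congruence arising from $I$ on $U$.

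For $(\Leftarrow)$, I assume $\sim_I$ satisfies (GCR) on $P$. By Theorem \ref{th:GCRcondition} the extension $\sim\sp{*}$ is then a Riesz congruence on the PEA $U$. Applying Theorem \ref{th:newtheor}(ii) to $U$ and $\sim\sp{*}$ produces a normal Riesz ideal $J:=\{u\in U:u\sim\sp{*}0\}$ of $U$. By Definition \ref{df:extendsim} no element of $U\setminus P$ is $\sim\sp{*}$-related to $0$, so $J\subseteq P$; and for $u\in P$, $u\sim\sp{*}0$ reduces to $u\sim_I 0$, which by Theorem \ref{th:R1congr} is exactly $u\in I$. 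Hence $J=I$, so $I$ is a normal Riesz ideal of $U$.

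For the converse, I assume $I$ is a normal Riesz ideal of $U$. Since $U$ has a unit, it is upward directed, so Theorem \ref{th:upward} gives that $\sim_I$ is a Riesz congruence on $U$; by Theorem \ref{th:congrpea} it satisfies (C4$\sp{\prime}$). I then take $a,b\in P$ with $a\sim_I b$, a relation that agrees whether computed in $P$ or in $U$, because subtractions in $P$ coincide with those in $U$. Then (C4$\sp{\prime}$) yields $a\sp{\sim}\sim_I b\sp{\sim}$, i.e., $\eta a\sim_I \eta b$ in $U$. A direct unpacking of the left subtraction via part (3) of Theorem \ref{th:gammaunitization} shows that $\eta a\diagdown i=\eta(\gamma i\oplus a)$ whenever $i\in I$ and $a\oplus i$ is defined in $P$, so witnesses $i,j\in I$ for $\eta a\sim_I\eta b$ produce $\gamma i\oplus a=\gamma j\oplus b$ in $P$. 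Because $I$ is $\gamma$-closed, $\gamma i$ and $\gamma j$ lie in $I$, and this is exactly (GCR).

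The main obstacle will be the explicit computation of the right subtraction $\eta a\diagdown i$ inside $U$ via part (3) of Theorem \ref{th:gammaunitization}: this is where the $\gamma$-twist inherent in the $\gamma$-unitization manifests, and the $\gamma$-closedness hypothesis on $I$ is precisely what allows the $\gamma i,\gamma j$ produced by that computation to be re-absorbed into $I$ on the way to (GCR). Once that computation is in hand, the rest is routine bookkeeping with the previously established machinery.
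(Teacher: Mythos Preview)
Your proof is correct. The $(\Leftarrow)$ direction matches the paper's argument exactly: invoke Theorem~\ref{th:GCRcondition} to get that $\sim\sp{*}$ is a Riesz congruence on $U$, then read off from Theorem~\ref{th:newtheor}(ii) that its zero class $I$ is a normal Riesz ideal of $U$.

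For $(\Rightarrow)$, however, your route is genuinely different from the paper's. The paper argues as follows: the congruence $\sim_{U,I}$ induced by $I$ on $U$ is Riesz (Theorem~\ref{th:upward}), hence by Theorem~\ref{th:newtheor}(i) each equivalence class is upward directed. Given $a\sim_I b$ in $P$, one therefore finds $r\in U$ with $a,b\leq r$ and $r\sim_{U,I}a$; writing $a+p=b+q=r$, a short cancellation shows $p,q\in I$, which yields (GCR) in its second form $a\oplus k=b\oplus\ell$. Your argument instead passes via (C4$\sp{\prime}$) to $\eta a\sim_I\eta b$ and then computes the right subtraction $\eta a\diagdown i=\eta(\gamma i\oplus a)$ explicitly from part~(3) of Theorem~\ref{th:gammaunitization}, obtaining (GCR) in its first form $\gamma i\oplus a=\gamma j\oplus b$. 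Your approach makes transparent use of both the $\gamma$-twist in the unitization and the $\gamma$-closedness of $I$, whereas the paper's approach stays at the level of general Riesz-congruence machinery and never touches the explicit unitization formulas; the paper's argument is thus slightly more portable, while yours illuminates exactly where the hypotheses on $\gamma$ enter.
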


\begin{proof}
If $I$ is a normal Riesz $\gamma$-ideal in $P$ and $\sim\sb{I}$
satisfies the (GCR) condition, then by Theorem \ref{th:GCRcondition}
$\sim\sb{I}\sp{*}$ is a Riesz congruence on $U$. Since $I$ is the zero class
of this congruence, it is a normal Riesz ideal in $U$ (Theorem
\ref{th:newtheor} (ii)).

Conversely, suppose that $I$ is a normal Riesz ideal in $U$ and let $\sim
\sb{U,I}$ be the relation induced on $U$ by $I$ as per Definition
\ref{df:simsbI}. Clearly, the restriction of $\sim\sb{U,I}$ to $P$ coincides
with $\sim\sb{I}$. Also, since $U$ is upward directed, $\sim\sb{U,I}$ is a
Riesz congruence on $U$ by Theorem \ref{th:upward}. Thus, by Theorem
\ref{th:congrpea}, $\sim\sb{U,I}$ satisfies (C4) and (C5$\sp{\prime}$)
on $U$, whence by Theorem \ref{th:newtheor} (i), all equivalence classes
in $U$ modulo $\sim\sb{U,I}$ are upward directed. To prove that $\sim\sb{I}$
satisfies GCR, assume that $a,b\in P$ with $a\sim\sb{I}b$. Then $a\sim
\sb{U,I}b$, so there exists $r\in U$ such that $a,b\leq r$ and $a\sim\sb{U,I}b
\sim\sb{U,I}r$. As $a,b\leq r$, there exist $p,q\in U$ with $a+p=b+q=r$.
Moreover, as $a\sim\sb{U,I}r$, there exist $i,j\in I$ such that $i\leq a$,
$j\leq r=a+p$, and $a\diagdown i=r\diagdown j=(a+p)\diagdown j$. Therefore,
$(a\diagdown i)+j=a+p=(a\diagdown i)+i+p$, whence $j=i+p$. But then, $p
\leq j\in I$, so $p\in I$. A similar argument shows that $q\in I$, and
therefore GCR holds.
\end{proof}

We remark that, in the proof of Theorem \ref{th:normRgammaideal&GCR},
$\sim\sb{U,I}=\sim\sb{I}\sp{*}$. To prove this, it will be sufficient
to show that, for all $a,b\in P$, $a\sim\sb{I}b\Leftrightarrow
a\sp{\sim}\sim\sb{U,I}b\sp{\sim}$ and that $a\sim\sb{U,I}b\sp{\sim}$
cannot hold. But by Theorem \ref{th:congrpea}, $a\sim\sb{I}b
\Rightarrow a\sim\sb{U,I}b\Rightarrow a\sp{\sim}\sim\sb{U,I}b\sp{\sim}$.
Conversely, by the same theorem, $a\sp{\sim}\sim\sb{U,I}b\sp{\sim}
\Rightarrow a=a\sp{\sim -}\sim\sb{U,I}b\sp{\sim -}=b\Rightarrow a\sim
\sb{I}b$. Moreover, if $a\sim\sb{U,I}b\sp{\sim}$, then there exist $i,j
\in I$ such that $a\diagdown i=b\sp{\sim}\diagdown j$, which yields
$(a\diagdown i)\oplus j=a\diagdown i+j=b\sp{\sim}\not\in P$, contradicting
the definition of $U$.

\begin{corollary}\label{co:corol}
In an upward directed GPEA $P$, a normal Riesz $\gamma$-ideal $I$ in $P$ is also
a normal Riesz ideal in the $\gamma$-unitization $U$ of $P$.
\end{corollary}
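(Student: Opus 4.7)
The plan is to reduce the corollary to verifying the (GCR) condition of Theorem \ref{th:GCRcondition}, after which Theorem \ref{th:normRgammaideal&GCR} finishes the job: it states that a normal Riesz $\gamma$-ideal $I$ in $P$ is a normal Riesz ideal in the $\gamma$-unitization $U$ exactly when $\sim_I$ satisfies (GCR) on $P$.

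Since $P$ is upward directed and $I$ is a normal Riesz ideal in $P$, Theorem \ref{th:upward} guarantees that $\sim_I$ is a Riesz congruence on $P$, so in particular it satisfies condition (CR). Given $a, b \in P$ with $a \sim_I b$, applying (CR) produces an element $d \in P$ with $a \leq d$, $b \leq d$, and $d \diagdown a \sim_I d \diagdown b \sim_I 0$; by Theorem \ref{th:R1congr}, this means $d \diagdown a \in I$ and $d \diagdown b \in I$. I then set $k := a \diagup d$ and $\ell := b \diagup d$, which are defined because $a, b \leq d$ and which automatically satisfy $a \oplus k = d = b \oplus \ell$ by the definition of $\diagup$.

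The remaining and central technical step is to confirm that $k, \ell \in I$. Here I would invoke Lemma \ref{le:normalidealprops1}: applied to the pair $(a, a \diagup d)$, whose orthosum is $d$, and using normality of $I$, the lemma gives $a \diagup d \in I$ iff $d \diagdown a \in I$, which places $k$ in $I$; the symmetric argument places $\ell$ in $I$. This verifies (GCR), and Theorem \ref{th:normRgammaideal&GCR} then delivers the conclusion. No real obstacle arises; the proof is a tidy chaining of previously established results, with Lemma \ref{le:normalidealprops1} serving as the bridge that converts the subtractive information produced by (CR) into the additive decomposition demanded by (GCR).
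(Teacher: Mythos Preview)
Your proof is correct, but it takes a slightly different route from the paper. Both arguments reduce to verifying (GCR) via Theorem~\ref{th:normRgammaideal&GCR}, and both exploit upward directedness to get an upper bound of $a$ and $b$. The paper, however, proceeds more directly: it picks an arbitrary upper bound $c$ of $a,b$, applies (C4) (valid since $\sim_I$ is a c-congruence by Theorem~\ref{th:rieszcongr}) to obtain $c\diagdown a\sim_I c\diagdown b$, and then unpacks the definition of $\sim_I$ to find $i,j\in I$ with $(c\diagdown a)\diagdown i=(c\diagdown b)\diagdown j$, which rewrites as $c\diagdown(i\oplus a)=c\diagdown(j\oplus b)$ and yields $i\oplus a=j\oplus b$. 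Your route instead passes through the Riesz-congruence machinery (Theorem~\ref{th:upward} and condition (CR)) to produce a \emph{special} upper bound $d$ with $d\diagdown a,\,d\diagdown b\in I$, and then invokes normality via Lemma~\ref{le:normalidealprops1} to swap to the right subtractions $a\diagup d,\,b\diagup d\in I$. The paper's argument is a touch more elementary (it avoids invoking Theorem~\ref{th:upward} and (CR)), while yours makes the role of normality more explicit; both are short and valid.
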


\begin{proof}
By the previous theorem, it is sufficient to show that $\sim_I$ satisfies (GCR).
So assume that $a\sim_I b$. Since $P$ is upward directed, there exists $c\in P$
with $a,b\leq c$, and by (C4), $c\diagdown a\sim_I c\diagdown b$. Thus there are
$i,j\in I$ such that $c\diagdown (i\oplus a)=(c\diagdown a)\diagdown i=
(c\diagdown b)\diagdown j=c\diagdown (j\oplus b)$; hence $i\oplus a=j\oplus b$.
\end{proof}

\begin{theorem}
Let $\sim$ be a $\gamma$-congruence on a GPEA $P$ which satisfies conditions
{\rm(C4)} and {\rm(C5$\sp{\prime}$)}. Let $\sim^*$ be the extension of $\sim$ in
the $\gamma$-unitization $U$ of $P$. Define $\widetilde{\gamma}:P/{\sim} \to P/{\sim}$
by $\widetilde{\gamma}[a]=[\gamma a]$. Then $\widetilde{\gamma}$ is a unitizing
automorphism in $P/{\sim}$, and $U/\sim^*$ is the  $\widetilde{\gamma}$-unitization
of $P/\sim$. In particular, if $I$ is a normal Riesz $\gamma$-ideal on $P$ such that
$\sim_I$ satisfies {\rm(GCR)}, then the $\widetilde{\gamma}$-unitization of $P/I$ is
$U/I$.
\end{theorem}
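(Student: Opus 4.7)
The plan is to check that $\widetilde{\gamma}$ is a well-defined unitizing GPEA-automorphism of $P/{\sim}$, exhibit $U/{\sim\sp{*}}$ as a unitization of $P/{\sim}$, identify its corresponding unitizing automorphism as $\widetilde{\gamma}$, and then invoke the uniqueness of the PEA-isomorphism onto the $\widetilde{\gamma}$-unitization supplied by the theorem that follows Theorem \ref{th:gammaunitization}. The $\gamma$-congruence hypothesis makes $\widetilde{\gamma}$ well-defined, since $a\sim b$ implies $\gamma a\sim \gamma b$. It is a GPEA-morphism: if $[a]\oplus [b]$ exists in $P/{\sim}$, then there are $a\sb{1}\sim a$, $b\sb{1}\sim b$ with $a\sb{1}\oplus b\sb{1}$ defined in $P$, so $\gamma a\sb{1}\oplus \gamma b\sb{1}=\gamma(a\sb{1}\oplus b\sb{1})$ is defined, yielding $\widetilde{\gamma}[a]\oplus\widetilde{\gamma}[b]=\widetilde{\gamma}([a]\oplus [b])$. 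Its two-sided inverse is $\widetilde{\gamma\sp{-1}}$, which is well-defined because the equivalence $a\sim b\Leftrightarrow \gamma a\sim\gamma b$ transforms, under the substitution $a\leftarrow\gamma\sp{-1}a$, $b\leftarrow\gamma\sp{-1}b$, into $\gamma\sp{-1}a\sim\gamma\sp{-1}b\Leftrightarrow a\sim b$, so that $\sim$ is also a $\gamma\sp{-1}$-congruence. For the unitizing property, $\widetilde{\gamma}[a]\oplus [b]$ is defined iff $\gamma a\sb{1}\oplus b\sb{1}$ exists in $P$ for some $a\sb{1}\sim a$, $b\sb{1}\sim b$, which by the unitizing property of $\gamma$ on $P$ is equivalent to $b\sb{1}\oplus a\sb{1}$ being defined, i.e., to $[b]\oplus [a]$ being defined in $P/{\sim}$.

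Next, by Theorem \ref{ThCongr} the relation $\sim\sp{*}$ is a congruence on the PEA $U$, so $U/{\sim\sp{*}}$ is a PEA. The assignment $[a]\sb{\sim}\mapsto [a]\sb{\sim\sp{*}}$ is a well-defined injection of $P/{\sim}$ into $U/{\sim\sp{*}}$ because $\sim\sp{*}$ restricted to $P\times P$ coincides with $\sim$ and no element of $P$ is $\sim\sp{*}$-related to any element of $P\sp{\eta}$; consequently the classes of $U/{\sim\sp{*}}$ split disjointly into the image of $P/{\sim}$ and the classes $[\eta a]\sb{\sim\sp{*}}$ for $a\in P$. Conditions (U1)--(U3) of Definition \ref{de:unitize} are then readily checked: (U1) follows from the fact that for $a,b\in P$, $a+b$ is defined in $U$ iff $a\oplus b$ is defined in $P$, so the partial operations on the two quotients agree under the embedding; (U2) holds because $1=[\eta 0]\sb{\sim\sp{*}}$ sits outside the image of $P/{\sim}$; and (U3) follows from part (4) of Theorem \ref{th:gammaunitization}, since any two representatives of classes outside the image of $P/{\sim}$ lie in $P\sp{\eta}$ and cannot be orthosummed.

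Finally, the unitizing GPEA-automorphism on $P/{\sim}$ corresponding to the unitization $U/{\sim\sp{*}}$ is, by the definition given in the paragraph preceding the theorem that follows Theorem \ref{th:gammaunitization}, the restriction of $[x]\mapsto [x]\sp{--}$ to $P/{\sim}$. The relations $a\oplus a\sp{\sim}=1$ and $a\sp{-}\oplus a=1$ in $U$ descend to $[a]\oplus [a\sp{\sim}]=1$ and $[a\sp{-}]\oplus [a]=1$ in $U/{\sim\sp{*}}$, whence $[a]\sp{\sim}=[a\sp{\sim}]$ and $[a]\sp{-}=[a\sp{-}]$; iterating yields $[a]\sp{--}=[a\sp{--}]=[\gamma a]=\widetilde{\gamma}[a]$. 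The uniqueness clause of that same theorem then identifies $U/{\sim\sp{*}}$ with the $\widetilde{\gamma}$-unitization of $P/{\sim}$. For the last clause, suppose $I$ is a normal Riesz $\gamma$-ideal with $\sim\sb{I}$ satisfying (GCR); Proposition \ref{pr:equivalences} ensures that $\sim\sb{I}$ is a $\gamma$-congruence on $P$ satisfying (C4) and (C5$\sp{\prime}$), so the general argument applies, and the remark following Theorem \ref{th:normRgammaideal&GCR} identifies $\sim\sb{I}\sp{*}$ with the congruence $\sim\sb{U,I}$ induced by $I$ on $U$, so that $U/I=U/{\sim\sb{U,I}}=U/{\sim\sb{I}\sp{*}}$ is the $\widetilde{\gamma}$-unitization of $P/{\sim\sb{I}}=P/I$. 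The main obstacle is the careful verification of the unitizing property for $\widetilde{\gamma}$ and the fact that the left-supplement operation on $U/{\sim\sp{*}}$ commutes with passage to $\sim\sp{*}$-classes, which together yield the crucial identification $[a]\sp{--}=\widetilde{\gamma}[a]$.
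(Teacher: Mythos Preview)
Your proof is correct and follows essentially the same outline as the paper's: verify that $\widetilde{\gamma}$ is a well-defined unitizing GPEA-automorphism, show via Theorem~\ref{ThCongr} that $U/{\sim^*}$ is a PEA whose classes split disjointly into those coming from $P$ and those coming from $P^\eta$, and check (U1)--(U3). The one place where you diverge is in the final identification step. The paper finishes by directly verifying, using the formulas (2) and (3) of Theorem~\ref{th:gammaunitization}, that the partial operation on $U/{\sim^*}$ coincides with that of the $\widetilde{\gamma}$-unitization under the bijection $\eta[a]:=[\eta a]$. You instead compute the unitizing automorphism corresponding to the unitization $U/{\sim^*}$ by showing $[a]^{--}=[a^{--}]=\widetilde{\gamma}[a]$, and then invoke the uniqueness theorem following Theorem~\ref{th:gammaunitization} to conclude. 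Both routes are short; yours is a touch more structural, trading a small explicit computation for an appeal to the classification result. Your treatment of the ``in particular'' clause, via Proposition~\ref{pr:equivalences} and the remark after Theorem~\ref{th:normRgammaideal&GCR}, also fills in detail that the paper leaves implicit.
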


\begin{proof}
By Theorem \ref{ThCongr}, $\sim^*$ is a congruence on $U$ and so $U/\sim^*$ is a
PEA. Clearly, if $s,t\in U$ and $s\in [t]$ (i.e. $s\sim\sp{*}t$), then either
$s,t\in P$ or else $s,t\in U\setminus P=:\eta P$; hence $[a]\cap[\eta b]=
\emptyset$ for all $a,b\in P$. Consequently, $U/\sim\sp{*}=P/\sim\sp{*}\cup\,
\eta P/\sim\sp{*}$, where the sets on the right are disjoint and obviously of
the same cardinality.  Thus we may put $\eta[a]:=[\eta a]$.

We have $[a]\oplus [b]\in P/{\sim}$ for all $a,b\in P$ and for $a,b\in \eta P$,
$[a]\oplus [b]$ does not exist. Also $[1]=[a]^-\oplus [a]=[a]\oplus [a]^{\sim}$
for any $a\in P$ and thus $[1]\not\in P/\sim$. Therefore $U/\sim^*$ satisfies
(U1)-(U3) and it is a unitization of $P/\sim$ according to Definition
\ref{de:unitize}.

As $\sim$ is a $\gamma$-congruence, $\widetilde{\gamma}[a]=[\gamma a], a\in P$, is well
defined. If $[a]\oplus [b]$ is defined, there are $a_1\sim a, b_1\sim b$ such that
$[a]\oplus [b]=[a_1\oplus b_1]$. Then $\widetilde{\gamma}([a]\oplus [b])=[\gamma(a_1
\oplus b_1)]=[\gamma a_1]\oplus [\gamma b_1]=\widetilde{\gamma}[a]\oplus\widetilde{\gamma}
[b]$, hence $\widetilde{\gamma}$ is additive. Moreover, $\widetilde{\gamma}[a]=\widetilde
{\gamma}[b]$ implies $\gamma a\sim \gamma b$, which in turn implies $a\sim b$, so that
$[a]=[b]$. This shows that $\widetilde{\gamma}$ is injective. To prove that $\widetilde
{\gamma}$ is surjective, observe that for all $a\in P$, $a=\gamma(\gamma^{-1} a)$,
hence $[a]=\widetilde{\gamma}[\gamma ^{-1} a]$. This proves that $\widetilde{\gamma}$ is
an automorphism of $P/{\sim}$. In addition, if $[a]\oplus [b]$ exists, then there are
$a_1\sim a$ and $b_1\sim b$ such that $a_1\oplus b_1$ is defined in $P$. Now
$a_1\oplus b_1$ is defined iff $\gamma b_1\oplus a_1$ is defined in $P$, whence
$[a]\oplus [b]$ exists iff $\widetilde{\gamma}[b]\oplus [a]$ exists. This proves that
$\widetilde{\gamma}$ is a unitizing automorphism of $P/{\sim}$.

In $U/{\sim^*}$ we have $[a]+ \eta [b]$ is defined iff there are $a_1,b_1$, $a_1
\sim a, b_1\sim b$ such that $a_1+ \eta b_1=\eta(b_1\diagdown a_1)$ is defined
and then $[a]+ \eta[b] = \eta[b_1\diagdown a_1]=\eta([b]\diagdown [a])$. Similarly,
$\eta[a]+[b]$ is defined iff there are $a_1\sim a, b_1\sim b$ such that $\eta a_1+
b_1=\eta(\gamma b_1\diagup a_1)$ is defined, and then $\eta[a]+[b]=\eta[\gamma b_1
\diagup a_1]=\eta(\widetilde{\gamma}[b]\diagup [a])$. This shows that $U/{\sim^*}$ is a
$\widetilde{\gamma}$-unitization of $P/{\sim}$.
\end{proof}

\begin{example}{\rm \cite[Example 2.3]{DXY}}
Let $\mathbb{Z}$ be the group of integers and $G=\mathbb{Z}\times\mathbb{Z}\times
\mathbb{Z}$. For any two elements of $G$ we define:
\[
(a,b,c)+(x,y,z)=\left \{ \begin{array}
                   {r@{\quad:\quad}l}
                    (a+x,b+y,c+z) & x \mbox{ is even} \\ (a+x,c+y,b+z) & x \mbox{ is odd}
                    \end{array} \right.
                                        \]
and define $(a,b,c)\leq (x,y,z)$ iff $a<x$ or $a=x, b\leq y, c\leq z$. Then $(G,+,\leq)$
is a lattice-ordered group with strong unit $u=(1,0,0)$ and the interval $E:=
G[(0,0,0),(1,0,0)]$ is a PEA.

The elements between $(0,0,0)$ and $(1,0,0)$ are of two kinds---$(0,a,b)$ and
$(1,c,d)$---where $a,b\geq 0$ and $c,d\leq 0$. The two sets are of the same cardinality
and $P:=\{(0,a,b)\in G: a,b\geq 0\}$ is a normal Riesz ideal of $E$. We also have
$(0,b,c)^{\sim}=(1,-c,-b)$ and $(0,b,c)^-=(1,-b,-c)$, thus $\gamma (0,a,b)=(0,a,b)^{--}=
(0,b,a)$. With this $\gamma$ on $P$, which is a weakly commutative GPEA, we have a GPEA
with a unitizing automorphism and its $\gamma$-unitization is exactly the $G[(0,0,0),
(1,0,0)]$ that we started with.

We may also consider the identity as a unitizing automorphism on $P$ and then we obtain
a unitization $G[(0,0,0),(1,0,0)]$ but with the $+$ operation on $G$ defined by
$$
(a,b,c)+(x,y,z)=(a+x,b+y,c+z)
$$
for all $a,b,c,x,y,z\in\mathbb{Z}$.
\end{example}

\section{The Riesz decomposition property}

\begin{definition}
We say, that a GPEA $P$ satisfies
\begin{itemize}
\item[(i)] the \emph{Riesz decomposition property} (\emph{RDP} for short), iff
for all $a,b,c,d\in P$ such that $a\oplus b=c\oplus d$, there are four elements
$e_{11}, e_{12}, e_{21}, e_{22}\in P$ such that $a=e_{11}\oplus e_{12}$, $b=e_{21}
\oplus e_{22}$, $c=e_{11}\oplus e_{21}$ and $d=e_{12}\oplus e_{22}$. We shall
denote these four decompositions by the following table:

\begin{tabular}{c|cc}
& c & d\\
\hline
a & $e_{11}$ & $e_{12}$\\
b & $e_{21}$ & $e_{22}$\\
\end{tabular}

\vspace{1em}
\item[(ii)] RDP$_1$, if for the decomposition in (i) it moreover holds: if $f,g
\in P$ are such that $f\leq e_{12}$, $g\leq e_{21}$, then $f$ and $g$
commute ($f\oplus g=g\oplus f$).
\item[(iii)] RDP$_2$, if for the decomposition in (i) it moreover holds that
 $e_{12}\wedge e_{21}=0$.
\item[(iv)] RDP$_0$, if for $a,b,c\in P$ such that $a\leq b\oplus c$, there
exists $b_1,c_1\in P$ such that $b_1\leq b$, $c_1\leq c$ and $a=b_1\oplus c_1$.
\end{itemize}
\end{definition}

From the next example we see that, even in the commutative case, if $P$ has RDP,
its unitization $U$ need not have it.

\begin{example}{\rm \cite[Example 4.1]{RiMa}}
Consider the GEA $P=\{0,a,b,c,a\oplus c,\linebreak b\oplus c\}$ {\rm(}see Fig. 1
below{\rm)} and let the unitizing automorphism $\gamma$ be the identity {\rm(}thus
the unitization $U$ of $P$ is the ``classical" unitization ${\widehat P}${\rm)}.
Then $P$ has RDP, but $U$ does not. Indeed, for example, in $U$, $(b\oplus c)
\sp{\perp}=b\sp{\perp}\ominus c$ cannot be decomposed except for $b\sp{\perp}
\ominus c=(b\sp{\perp}\ominus c)\oplus 0$. But $(b\sp{\perp}\ominus c)\oplus b
=c\sp{\perp}=(a\sp{\perp}\ominus c)\oplus a$ and therefore, for these elements,
there is no decomposition in the sense of RDP.
\newpage
\vspace{2em}

%*****************************************   O B R ? Z O K
{\footnotesize
\begin{picture}(60,0)(60,0)
\unitlength=4pt

\put(60,-20){\circle*{1,2}} \put(57,-20){\makebox(0,0)[l]{$0$}}
\put(50,-10){\circle*{1,2}} \put(47,-10){\makebox(0,0)[l]{$a$}}
\put(60,-20){\line(-1,1){10}}
\put(55,-10){\circle*{1,2}} \put(57,-10){\makebox(0,0)[l]{$c$}}
\put(60,-20){\line(-1,2){5}}
\put(65,-10){\circle*{1,2}} \put(67,-10){\makebox(0,0)[l]{$b$}}
\put(60,-20){\line(1,2){5}}
\put(52.5,0){\circle*{1,2}} \put(50,0){\makebox(0,0)[r]{$a\oplus c$}}
\put(50,-10){\line(1,4){2.5}}
\put(55,-10){\line(-1,4){2.5}}
\put(60,0){\circle*{1,2}} \put(62,0){\makebox(0,0)[l]{$b\oplus c$}}
\put(55,-10){\line(1,2){5}}
\put(65,-10){\line(-1,2){5}}
\put(55,-28){\normalsize Fig. 1}
\end{picture}
}

\vspace{10em}
\end{example}

In the following theorem, we show that if a GPEA $P$ has a total operation
$\oplus$, then $P$ has any of the RDP properties iff its $\gamma$-unitization
(with respect to any unitizing automorphism $\gamma$) has the corresponding RDP
property. We do not know whether the above condition is also necessary.

\begin{theorem}
Let $P$ be a total GPEA and let $U$ be the unitization of $P$ by a unitizing
GPEA-automorphism $\gamma$. Then $P$ has RDP {\rm(}RDP$_0$, RDP$_1$, RDP$_2$,
respectively{\rm)} iff $U$ has RDP {\rm(}RDP$_0$, RDP$_1$, RDP$_2$,
respectively{\rm)}.
\end{theorem}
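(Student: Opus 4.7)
The plan is to prove both directions by a careful case analysis, splitting on whether each of the four elements $a,b,c,d$ lies in $P$ or in $P\sp{\eta}=U\setminus P$, and exploiting the explicit formulas for $+$ from Theorem~\ref{th:gammaunitization}.

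For the easy (reverse) direction, suppose $U$ has RDP (resp.\ RDP$_0$, RDP$_1$, RDP$_2$) and let $a\oplus b=c\oplus d$ hold in $P$. Interpreted in $U$, the hypothesis yields a decomposition with $e_{ij}\in U$ (or, for RDP$_0$, summands $b_1,c_1\in U$). Because any orthosum in $U$ that involves a $P\sp{\eta}$-element is either undefined or lies in $P\sp{\eta}$, the equation $a=e_{11}+e_{12}$ with $a\in P$ forces both $e_{11},e_{12}\in P$, and the same reasoning places every $e_{ij}$ (resp.\ $b_1,c_1$) in $P$, using also that $P$ is a downward-closed ideal of $U$. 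For RDP$_1$ the elements dominated by $e_{12},e_{21}\in P$ again lie in $P$, so their commutation is a statement within $P$; for RDP$_2$ meets in $U$ of elements of $P$ coincide with meets in $P$, with $0$ serving as the common bottom.

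For the forward direction assume $P$ has the corresponding property and let $a+b=c+d$ in $U$. If $a+b\in P$, all four summands lie in $P$ and the desired decomposition is supplied directly by the hypothesis on $P$. Otherwise $a+b\in P\sp{\eta}$, and in each of the four subcases (determined by which of $\{a,b\}$ and which of $\{c,d\}$ contains the $P\sp{\eta}$-element) exactly one summand on each side lies in $P\sp{\eta}$. The archetypal subcase is $a,c\in P$ and $b=\eta b'$, $d=\eta d'\in P\sp{\eta}$: Theorem~\ref{th:gammaunitization}\,(2) gives $b'\diagdown a=d'\diagdown c=:r\in P$, so that $r\oplus a=b'$ and $r\oplus c=d'$. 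A square decomposition in $U$ is then equivalent to producing $e_{11},e_{12},e_{21},s\in P$ with $e_{11}\oplus e_{12}=a$, $e_{11}\oplus e_{21}=c$, and $e_{12}\oplus d'=e_{21}\oplus b'=s$, and setting $e_{22}:=\eta s$. Totality of $P$ ensures that all intermediate orthosums are defined, and RDP in $P$ applied to an auxiliary equation constructed from $a,c,r$ and their conjugates (supplied by (GPEA2) and totality) delivers the required $e_{11},e_{12},e_{21}$, with $s$ then determined. The remaining three subcases are symmetric, and the entire argument adapts verbatim for RDP$_0$ (starting from $u\leq v+w$ in $U$), for RDP$_1$ (the extra commutation condition transfers because the relevant lower-bound elements again lie in $P$), and for RDP$_2$ (meets in $U$ of $P$-elements are meets in $P$).

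The principal obstacle is the archetypal subcase above: substituting $a=e_{11}\oplus e_{12}$ and $c=e_{11}\oplus e_{21}$ into $e_{12}\oplus d'=e_{21}\oplus b'$ yields the three-fold identity $e_{12}\oplus r\oplus c=e_{21}\oplus r\oplus a$, which is not amenable to simple cancellation when $P$ is non-commutative; naive choices such as $e_{11}=0$ fail unless $a$ and $c$ commute modulo $r$. Totality is what allows one to sidestep this: it guarantees that the conjugates of $r$ by $a$ and by $c$ (produced via (GPEA2)) exist in $P$, so that RDP in $P$ can be applied to a suitably chosen three-fold orthosum to produce a common $e_{11}\in P$ bounded above by both $a$ and $c$ and satisfying the required compatibility. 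This is precisely where totality is used in an essential way, and it accounts for the authors' remark that the necessity of the totality hypothesis remains open.
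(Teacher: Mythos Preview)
Your reverse direction is correct and matches the paper. The forward direction, however, has two genuine gaps.

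First, your ``archetypal'' subcase $a,c\in P$, $b=\eta b'$, $d=\eta d'$ is not where totality is essentially needed, and your sketch for it never becomes a proof. The paper handles this case using only \emph{upward directedness} (a consequence of totality, but weaker): choose $e\in P$ with $b',d'\leq e$; then from $b'\diagdown a=d'\diagdown c$ one obtains $a\oplus(b'\diagup e)=c\oplus(d'\diagup e)$ in $P$, to which RDP in $P$ applies directly, and the resulting $2\times 2$ table lifts back to $U$ by adjoining a single correction term. Your phrase ``RDP in $P$ applied to an auxiliary equation constructed from $a,c,r$ and their conjugates'' never names such an equation, and it is not clear one can be produced that yields the required common $e_{11}$ without the upper-bound trick; the non-commutativity obstacle you flag is real, and you have not actually overcome it.

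Second, and more seriously, the remaining subcases are \emph{not} symmetric to your archetypal one. The mixed subcase $a+\eta b=\eta c+d$ (with $a,b,c,d\in P$) is structurally different: there is no common element $r$ linking the two sides as in case (b), and the paper proceeds by an entirely different, trivial argument. Totality gives $d\leq b\sp{\sim}=\eta b$ (every element of $P$ lies below every element of $P\sp{\eta}$), so $\eta b=x+d$ for some $x\in U$; cancellation then yields $a+x=\eta c$, and the decomposition $e_{11}=a$, $e_{12}=0$, $e_{21}=x$, $e_{22}=d$ works immediately. It is precisely \emph{this} mixed case (and the analogous RDP$_0$ case $\eta a\leq b+\eta c$) where full totality is used---the paper notes this explicitly at the end of the proof. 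Your plan folds this case into a claimed symmetry that does not exist, and so misidentifies both the argument and the role of the totality hypothesis.
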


\begin{proof}Notice first, that, since $a\oplus b$ exists for all $a,b\in P$,
then of course, $P$ is upward directed. Also, since $a+b=a\oplus b$ exists,
then $b\leq a\sp{\sim}=\eta a$ for all $a,b\in P$.

It is clear, that if $U$ has RDP (RDP$_1$, RDP$_2$), then $P$ has RDP
(RDP$_1$, RDP$_2$), because if the decomposition of $a\oplus b=c\oplus d$
is:\\

\begin{tabular}{c|cc}
& c & d\\
\hline
a & $e_{11}$ & $e_{12}$\\
b & $e_{21}$ & $e_{22}$\\
\end{tabular}

\vspace{1em}
\noindent
then $e_{11}, e_{12}\leq a$ and $e_{21}, e_{22}\leq b$ and so if
$a,b,c,d\in P$, then $e_{11}, e_{12}, e_{21},\\ e_{22}\in P$, because $P$
is an ideal in $U$. Similarly, if $U$ has RDP$_0$, so does $P$.

Now assume that RDP holds in $P$. Then we have to consider three cases:
\begin{itemize}

\item[(a)] $\eta a+b=\eta c+d$ (where $a,b,c,d\in P$). Then
$\eta (\gamma b\diagup a)=\eta (\gamma d\diagup c)$, so $\gamma b\diagup a
=\gamma d\diagup c$. Since $P$ is upward directed, there exists $e\in P$
with $e\geq a,c$ and therefore
$$
e\diagdown(\gamma b\diagup a)=e\diagdown(\gamma d\diagup c), \text{\ i.e.,\ }
(e\diagdown a)\oplus\gamma b=(e\diagdown c)\oplus\gamma d
$$
where all these elements are in $P$, so by RDP we have a decomposition:

\begin{tabular}{c|cc}
& $e\diagdown c$ & $\gamma d$\\
\hline
$e\diagdown a$ & $e_{11}$ & $e_{12}$\\
$\gamma b$ & $e_{21}$ & $e_{22}$\\
\end{tabular}

\vspace{1em}
\noindent
Also, as $(e\diagdown a)+a$ exists, $e\diagdown a\leq a\sp{-}$, whence
$\gamma\sp{-1}(e\diagdown a)\leq\eta a$, and likewise $\gamma\sp{-1}
(e\diagdown c)\leq\eta c$. Therefore
$$\exists x,y\in U,\ x+\gamma\sp{-1}(e\diagdown a)=\eta a\text{\ and\ }
 y+\gamma\sp{-1}(e\diagdown c)=\eta c.$$
But by assumption $\eta a+b=\eta c+d$, and therefore
$$x+\gamma\sp{-1}(e\diagdown a)+b=\eta a+b=\eta c+d=y+\gamma\sp{-1}
 (e\diagdown c)+d.$$
Moreover, from $(e\diagdown a)\oplus\gamma b=(e\diagdown c)\oplus\gamma d$, we
deduce that
$$\gamma\sp{-1}(e\diagdown a)+b=\gamma\sp{-1}(e\diagdown a)\oplus b=
 \gamma\sp{-1}(e\diagdown c)\oplus d=\gamma\sp{-1}(e\diagdown c)+d$$
and it follows that $x=y$. Thus, $\eta a=x+\gamma\sp{-1}(e\diagdown a)$,
$\eta c=x+\gamma\sp{-1}(e\diagdown c)$, and we have

\begin{tabular}{c|cc}
& $\eta c$ & $d$\\
\hline
$\eta a$ & $x+\gamma^{-1} e_{11}$ & $\gamma^{-1} e_{12}$\\
$b$ & $\gamma^{-1} e_{21}$ & $\gamma^{-1} e_{22}$\\
\end{tabular}

\vspace{1em}

\item[(b)] $a+\eta b=c+\eta d$. Then $\eta(b\diagdown a)=\eta
(d\diagdown c)$, so $b\diagdown a=d\diagdown c$. Again, there exists $e
\in P$ with $e\geq b,d$, so that
$$(b\diagdown a)\diagup e=(d\diagdown c)\diagup e, \text{\ i.e.,\ }
a\oplus(b\diagup e)=c\oplus(d\diagup e).$$
Thus, since RDP holds in $P$, we get

\begin{tabular}{c|cc}
& $c$ & $d\diagup e$\\
\hline
$a$ & $e_{11}$ & $e_{12}$\\
$b\diagup e$ & $e_{21}$ & $e_{22}$\\
\end{tabular}

\vspace{1em}
\noindent
As $b\oplus(b\diagup e)$ exists, we have $b\diagup e\leq b\sp{\sim}=
\eta b$ and thus $\eta b=(b\diagup e)+x$ for some $x\in U$. Similarly,
$\eta d=(d\diagup e)+y$ for some $y\in U$. Then
$$a+(b\diagup e)+x=a+\eta b=c+\eta d=c+(d\diagup e)+y$$
and, since $a+(b\diagup e)=c+(d\diagup e)$, it follows that $x=y$.
Consequently, we obtain the decomposition:

\begin{tabular}{c|cc}
& $c$ & $(d\diagup e)+x$\\
\hline
$a$ & $e_{11}$ & $e_{12}$\\
$(b\diagup e)+x$ & $e_{21}$ & $e_{22}+x$\\
\end{tabular}

\vspace{1em}
\noindent
where $(b\diagup e)+x=\eta b$ and $(d\diagup e)+x=\eta d$.

\item[(c)] $a\oplus \eta b=\eta c\oplus d$. As $d\leq b\sp{\sim}=\eta b$
there exists $x\in U$ with $x+d=\eta b$. Then we have $a+x+d=a+\eta b=
\eta c+d$, and therefore $a+x=\eta c$. Thus the required decomposition is:

\begin{tabular}{c|cc}
& $\eta c$ & $d$\\
\hline
$a$ & $a$ & $0$\\
$\eta b$ & $x$ & $d$\\
\end{tabular}
\end{itemize}

From the decompositions obtained in (a), (b), and (c), it is clear
that, if $P$ satisfies RDP$_1$ or RDP$_2$, then so does $U$.

Finally we prove that $P$ has RDP$_0$ iff $U$ has RDP$_0$.
Clearly, if $U$ has RDP$_0$, then $P$ has RDP$_0$.

Assume that $P$ has RDP$_0$. Then there are four cases to check:\\
(i) If $a\leq\eta b\oplus c$, then as $a\leq b\sp{\sim}=\eta b$, it
follows that $a=a\oplus 0$ provides the desired decomposition.\\
(ii) If $a\leq b\oplus\eta c$, then similarly, $a=0\oplus a$ provides
the desired decomposition.\\
(iii) If $\eta a\leq\eta b\oplus c$, then we have $c\leq\eta a$ and
so $\eta a=x\oplus c\leq\eta b\oplus c$ for some $x\in U$ and that
implies $x\leq\eta b$. So $\eta a=x+c$ provides the desired decomposition.\\
(iv) If $\eta a\leq b\oplus\eta c$, then we make use of $b\leq\eta a$ and
so there is again an $x\in U$ such that $\eta a=b+x\leq b\oplus\eta c$, which
implies $x\leq\eta c$, whence $\eta a=b+x$ provides the desired decomposition.

We note that only in cases (c) and (iv) do we use the hypothesis that $P$
is total.
\end{proof}

We recall that a pseudo effect algebra $E$ is a \emph{subdirect product} of a
system of pseudo effect algebras $(E_t:t\in T)$ if there is an injective
homomorphism $h:E\to \prod_{t\in T}E_t$ such that $\pi_t(h(E))=E_t$ for each
$t\in T$, where $\pi_t$ is the $t$-th projection of $\prod_{t\in T} E_t$ onto
$E_t$. In addition, $E$ is \emph{subdirectly irreducible} if whenever $E$ is a
subdirect product of $(E_t:t\in T)$, there is $t_0\in T$ such that $\pi_{t_0}
\circ h$ is an isomorphism of pseudo effect algebras.

In the theory of total algebras, it is well known that an algebra is subdirectly
irreducible iff it has a smallest nontrivial ideal. As pseudo effect algebras are
only partial algebraic structures, some results of universal algebra need not
hold for arbitrary pseudo effect algebras. For example, while for total algebras,
there is one-to-one correspondence between congruences and ideals, in pseudo effect
algebras the relations between ideals and congruences are more complicated.
Therefore, in what follows, instead of studying irreducibility, we study relations
between the smallest nontrivial (i.e. different from $\{ 0\}$) normal Riesz ideals
in GPEAs and in their unitizations.

\begin{theorem}\label{th:smallid} Let $P$ be an upward directed  GPEA and let
$U$ be a $\gamma$-unitiza\-tion of $P$. Then there is a  smallest nontrivial normal
Riesz ideal in $U$ iff there is a smallest normal Riesz $\gamma$-ideal in $P$.
\end{theorem}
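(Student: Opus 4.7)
The plan is to use the restriction/extension correspondence already established in this section. Theorem~\ref{th:restrictid} sends every normal Riesz ideal $J$ of $U$ to the normal Riesz $\gamma$-ideal $J\cap P$ of $P$, while Corollary~\ref{co:corol}, which needs upward directedness, shows that every normal Riesz $\gamma$-ideal of $P$ is itself a normal Riesz ideal of $U$ via the inclusion $P\hookrightarrow U$. A third ingredient is Theorem~\ref{th:Pnormalriesz}, which, under upward directedness, makes $P$ itself a nontrivial normal Riesz ideal of $U$.

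For the direction $(\Rightarrow)$, let $J_0$ be the smallest nontrivial normal Riesz ideal in $U$. Because $P$ is a candidate, minimality forces $J_0\subseteq P$, and then Theorem~\ref{th:restrictid} realizes $J_0=J_0\cap P$ as a nontrivial normal Riesz $\gamma$-ideal of $P$. To see it is smallest among these, take any other such ideal $I$ of $P$; by Corollary~\ref{co:corol}, $I$ becomes a nontrivial normal Riesz ideal of $U$, so $J_0\subseteq I$ by the minimality of $J_0$ in $U$.

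For the direction $(\Leftarrow)$, let $I_0$ be the smallest nontrivial normal Riesz $\gamma$-ideal in $P$; by Corollary~\ref{co:corol}, $I_0$ is a nontrivial normal Riesz ideal in $U$. Given any nontrivial normal Riesz ideal $K$ of $U$, Theorem~\ref{th:restrictid} gives the normal Riesz $\gamma$-ideal $K\cap P$ of $P$: if $K\cap P$ is nontrivial, then minimality of $I_0$ yields $I_0\subseteq K\cap P\subseteq K$. The main obstacle is the residual case $K\cap P=\{0\}$, in which $K\setminus\{0\}\subseteq P^{\eta}$. Here I would pick $\eta a\in K$, use upward directedness to select $b\geq a$ (so $\eta b\leq\eta a$ in $U$ and hence $\eta b\in K$), and then apply condition R1 to the inequalities $\eta a\leq a+\eta a=1=b+\eta b$; the factors produced by R1, combined with the normality of $K$ in $U$ and the explicit formulas for the partial operation of $U$ from Theorem~\ref{th:gammaunitization}, should allow one either to extract a nonzero element of $K\cap P$, contradicting the assumption, or to obtain $I_0\subseteq K$ directly. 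That delicate case is where I expect the bulk of the work to lie; it is also where the hypothesis that $P$ is upward directed has to be used in an essential way beyond merely invoking Corollary~\ref{co:corol} and Theorem~\ref{th:Pnormalriesz}.
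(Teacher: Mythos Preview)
Your $(\Rightarrow)$ is essentially the paper's argument, made a bit more explicit by invoking Theorem~\ref{th:Pnormalriesz} to get $J_0\subseteq P$ before appealing to Theorem~\ref{th:restrictid}.

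For $(\Leftarrow)$ the paper takes a shorter route and never meets your residual case. After using Corollary~\ref{co:corol} to regard the least nontrivial normal Riesz $\gamma$-ideal $I$ of $P$ as a normal Riesz ideal of $U$, the paper only verifies that $I$ is \emph{minimal} in $U$: for any nontrivial normal Riesz ideal $J$ of $U$ with $J\subseteq I$ one has $J\subseteq I\subseteq P$, so $J=J\cap P\neq\{0\}$ is (by Theorem~\ref{th:restrictid}) a normal Riesz $\gamma$-ideal of $P$, whence $I\subseteq J$ by the minimality of $I$ in $P$, and thus $J=I$. Because only sub-ideals $J\subseteq I$ are considered, the possibility $J\cap P=\{0\}$ simply does not arise.

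Your plan---to prove $I_0\subseteq K$ for \emph{every} nontrivial normal Riesz ideal $K$ of $U$---is the stronger reading of ``smallest,'' and the residual case $K\cap P=\{0\}$ you isolate is a genuine obstruction rather than a technicality. Your R1 computation (apply R1 to $\eta a\leq b+\eta b$ with $b$ an upper bound of $a$ and an arbitrary $c\in P$) correctly forces $a$ to be the maximum of $P$, so $P$ is then itself a PEA; but it does not produce a nonzero element of $K\cap P$. Indeed, in the simplest instance---$P$ the two-element effect algebra $\{0,a\}$ with $\gamma$ the identity, so that $U$ is the four-element Boolean algebra---the set $\{0,\eta a\}$ is a nontrivial normal Riesz ideal of $U$ meeting $P$ only in $0$, and $U$ has two incomparable minimal nontrivial normal Riesz ideals even though $P$ has a least one. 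So your residual case cannot be closed as sketched; to match the paper, argue minimality of $I$ in $U$ as above rather than attempting the least-element statement directly.
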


\begin{proof} Let $I$ be a smallest nontrivial normal Riesz ideal in $U$. Then
$I\cap P$ is a nontrivial normal Riesz $\gamma$-ideal in $P$ (Theorem
\ref{th:restrictid}). Let $\{ 0\}\neq I_0\subset I\cap P$ be a normal Riesz
$\gamma$-ideal in $P$. By Corollary \ref{co:corol}, $I_0$ is also normal Riesz
ideal in $U$, therefore we must have $I_0=I=I\cap P$.  It follows that $I$ is
also the smallest nontrivial Riesz $\gamma$-ideal in $P$.

Conversely, let $I$ be a smallest nontrivial normal Riesz $\gamma$-ideal in $P$.
By Corollary \ref{co:corol}, $I$ is also a normal Riesz ideal in $U$. If there is
a normal Riesz ideal $\{0\}\neq J$ in $U$ such that $J\subseteq I$, then $\{0\}
\neq J\cap P$ is a normal Riesz $\gamma$-ideal in $P$, and hence $J\cap P=I$. It
follows that $I\subseteq J$, hence $I$ is the smallest nontrivial Riesz ideal in $U$.
\end{proof}

Notice that in an upward directed GPEA $P$ with RDP, every ideal is Riesz ideal.
Similarly as for GEAs \cite[Lemma 3.2]{Dvtat} (see also \cite[Lemma 4.3]{Dvnew}),
it can be proved that a (nontrivial) upward directed  GPEA $P$ with RDP$_1$ is
subdirectly irreducible iff $P$ possesses a smallest non-trivial normal ideal. This
yields the following corollary.

\begin{corollary}\label{co:least} Let $P$ be a GPEA with total operation $\oplus$
satisfying RDP$_1$ and let $U$ be its $\gamma$-unitization. Then $U$ is subdirectly
irreducible iff $P$ has a smallest nontrivial normal $\gamma$-ideal. If $\gamma$ is
the identity, then $U$ is subdirectly irreducible iff $P$ is subdirectly irreducible.
\end{corollary}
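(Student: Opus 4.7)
The plan is to chain together three ingredients already developed in the excerpt: (a) the transfer theorem for the RDP properties between $P$ and its $\gamma$-unitization $U$ (under the hypothesis that $P$ is total); (b) the criterion stated just before the corollary, namely that a nontrivial upward directed GPEA with RDP$_1$ is subdirectly irreducible iff it possesses a smallest nontrivial normal ideal; (c) Theorem \ref{th:smallid}, which equates the existence of a smallest nontrivial normal Riesz ideal in $U$ with the existence of a smallest nontrivial normal Riesz $\gamma$-ideal in $P$.

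First I would observe that the totality of $\oplus$ on $P$ forces $P$ to be upward directed (any two elements are bounded above by their orthosum), and $U$ is automatically upward directed since it has a top element $1$. Applying the transfer theorem for RDP$_1$, the hypothesis that $P$ satisfies RDP$_1$ yields that $U$ satisfies RDP$_1$ as well. In particular, both $P$ and $U$ satisfy RDP, and so by the remark preceding the corollary, every (normal) ideal in $P$ and in $U$ is automatically a Riesz ideal. This lets me replace the phrase ``smallest nontrivial normal Riesz ideal'' by ``smallest nontrivial normal ideal'' in both $P$ (for $\gamma$-closed ideals) and $U$ throughout the argument.

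Now I would run the chain of equivalences: $U$ is subdirectly irreducible iff $U$ has a smallest nontrivial normal ideal (by (b) applied to $U$), iff $U$ has a smallest nontrivial normal Riesz ideal (by the previous paragraph), iff by Theorem \ref{th:smallid} $P$ has a smallest nontrivial normal Riesz $\gamma$-ideal, iff $P$ has a smallest nontrivial normal $\gamma$-ideal (again using that every normal ideal in $P$ is Riesz). This establishes the first assertion. For the second, I would note that if $\gamma$ is the identity, then every subset of $P$ is trivially $\gamma$-closed, so ``smallest nontrivial normal $\gamma$-ideal in $P$'' coincides with ``smallest nontrivial normal ideal in $P$,'' which, by (b) applied to $P$ itself, is equivalent to $P$ being subdirectly irreducible.

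The only real point requiring care is the application of (b); it is stated (with citations to \cite{Dvtat, Dvnew}) as a fact whose proof is analogous to the GEA case, and both $P$ and $U$ meet its hypotheses (nontrivial, upward directed, RDP$_1$), so no new work is needed. The rest is bookkeeping, the main thing to watch being that the RDP transfer theorem genuinely delivers RDP$_1$ for $U$, and thereby the absence of any distinction between ``normal ideal'' and ``normal Riesz ideal'' in either structure.
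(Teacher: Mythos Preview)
Your proposal is correct and follows precisely the approach the paper has in mind: the corollary is stated immediately after the remark that in an upward directed GPEA with RDP every ideal is Riesz and the criterion (cited from \cite{Dvtat, Dvnew}) characterizing subdirect irreducibility via a smallest nontrivial normal ideal, and your chain of equivalences via the RDP$_1$ transfer theorem and Theorem~\ref{th:smallid} is exactly how the paper intends these pieces to be combined. Indeed, you have spelled out more detail than the paper itself provides, since the paper offers no separate proof beyond the phrase ``This yields the following corollary.''
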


\begin{remark}  Assume that $(P;\oplus ,0)$ is an upward directed GPEA, $I$ is a
nonempty indexing set, $\lambda,\rho :I\to I$ are bijections, condition (KCI) holds,
and $(K;+,0_K,1_K)$ is the resulting kite algebra (Theorem \ref{th:kiteisomorphism}).
By Corollary \ref{co:KCI}, $K$ is the $\gamma$-unitization of $P^I$ with the unitizing
automorphism $\gamma (a_i)_{i\in I}=(a_{\rho\circ\lambda^{-1}(i)})_{i\in I}$.
We shall say that $i,j\in I$ are \emph{connected} iff $(\rho\circ\lambda^{-1})^m(i)
=j$, or $(\rho\circ\lambda^{-1})^m(j)=i$  for some integer $m\geq 0$, otherwise $i$
and $j$ are \emph{disconnected}. Let $i_0,j_0\in I$ be disconnected, and let $I_0$ and
$I_1$ be maximal subsets of mutually connected elements in $I$ containing $i_0$ and
$j_0$, respectively. Then clearly, every element in $I_0$ is disconnected with every
element in $I_1$. Let $H_0:=\{ (f_i)_{i\in I}:f_i=0\  \forall i\notin I_0\}$, $H_1:=
\{ (f_j)_{j\in I}: f_j=0 \ \forall j\notin I_1\}$. It is easy to see that $H_0$ and
$H_1$ are normal $\gamma$-ideals in $P^I$. Assume that $K$ has RDP$_1$. Then also
$P^I$ has RDP$_1$, and  $H_0$ and $H_1$ are normal Riesz $\gamma$-ideals in $P^I$. By
Corollary \ref{co:corol}, they are also normal Riesz ideals in $K$. Clearly, $H_0
\cap H_1=\{ 0\}$. We can deduce that, under the above suppositions, if $K$ has a
smallest nontrivial normal Riesz ideal, then the set $I$ is connected. (Compare with
\cite[Theorem 4.6]{Dvnew}.)
\end{remark}

\end{document}